    \numberwithin{equation}{subsection}
    \newtheorem{propo}{Proposition}[section]
    \newtheorem{theor}[propo]{Theorem}
    \newtheorem{lemma}[propo]{Lemma}
    \theoremstyle{definition}
    \theoremstyle{remark}
\newcommand{\QQ}{\mathbb{Q}}
\newcommand{\ZZ}{\mathbb{Z}}
\newcommand{\RR}{\mathbb{R}}
 \newcommand{\kk}{\mathbb{K}}
\newcommand{\Hom}{\operatorname{Hom}}
\newcommand{\Ker}{\operatorname{Ker}}
\newcommand{\ee}{\operatorname{e}}
\newcommand{\ad}{\operatorname{ad}}
\newcommand{\Int}{\operatorname{Int}}
\newcommand{\Aut}{\operatorname{Aut}}
\newcommand{\Der}{\operatorname{Der}}
\newcommand{\Inn}{\operatorname{Inn}}
\newcommand{\Out}{\operatorname{Out}}
\newcommand{\inter}{\operatorname{int}}
\newcommand{\id}{\operatorname{id}}
\newcommand{\aug}{\operatorname{aug}}
\newcommand{\modu}{\operatorname{mod}}
\newcommand{\mcg}{\mathcal{M}}
\newcommand{\bigdot}{\bullet}
\newcommand{\mediumdot}{{ \displaystyle \mathop{ \ \ }^{\hbox{$\centerdot$}}}}
\newcommand{\mediumdott}[1]{{ \displaystyle \mathop{\ \ }^{\hbox{$\centerdot$}}}_{\!\!#1 } }
\newcommand{\bord}{\nu}
\newcommand{\drob}{\overline{\nu}}
\let\oldmarginpar\marginpar
\renewcommand\marginpar[1]{\oldmarginpar{\footnotesize #1}}
\begin{document}

    \title{Fox pairings and generalized Dehn twists}

    \author[Gw\'ena\"el Massuyeau]{Gw\'ena\"el Massuyeau}
    \address{
    Gw\'ena\"el Massuyeau \newline
    \indent IRMA,    Universit\'e de Strasbourg \& CNRS \newline
    \indent 7 rue Ren\'e Descartes \newline
    \indent 67084 Strasbourg, France \newline
    \indent $\mathtt{massuyeau@math.unistra.fr}$}

    \author[Vladimir Turaev]{Vladimir Turaev}
    \address{
    Vladimir Turaev \newline
    \indent   Department of Mathematics \newline
    \indent  Indiana University \newline
    \indent Bloomington IN47405, USA\newline
    \indent {\it and} \newline
    \indent IRMA,    Universit\'e de Strasbourg \& CNRS \newline
    \indent 7 rue Ren\'e Descartes \newline
    \indent 67084 Strasbourg, France \newline
    \indent $\mathtt{vturaev@yahoo.com}$}

                     \begin{abstract}
                     We introduce a notion of a Fox pairing in a group algebra and use Fox pairings to define automorphisms
of the  Malcev completions of groups. These automorphisms generalize
to the algebraic setting the action of the Dehn twists in the group
algebras of the fundamental groups of surfaces. This work is
inspired by the   Kawazumi--Kuno
  generalization of  the Dehn twists to non-simple  closed curves on surfaces.
                     \end{abstract}
                     \maketitle

  \section {Introduction}

   There is a simple and  well-known construction producing
families of automorphisms of modules from   bilinear forms. Given a
module $H$ over a commutative ring $\kk$ and a bilinear form
$\mediumdot: H\times H\to \kk$, one associates with any isotropic
vector $ a\in H$ and any $ k\in \kk$ a transvection $H\to H$
carrying each $h\in H$ to $h+k (a\mediumdot h)a$. We introduce  in
this paper a group-theoretic version of   transvections. Note that
any group $\pi$ has a Malcev completion $\hat \pi=\hat \pi^\kk$
formed by the group-like elements of the   Hopf algebra $\widehat{\kk[\pi]}$ 
which is the fundamental completion of the group algebra $\kk[\pi]$, see \cite{Qu}. Our main
construction   starts with a group $\pi$ and a certain bilinear
form,   a Fox pairing, in $\kk[\pi]$ and produces a family of group
automorphisms of   $\hat \pi$ which are in many respects similar to
transvections.

Our original motivation comes from the study of diffeomorphisms of
  surfaces.  Recall that   simple closed curves  on  a connected  oriented surface $\Sigma$ give  rise
  to
   diffeomorphisms   $\Sigma \to \Sigma$ called the  Dehn twists. The Dehn twists
   induce group
automorphisms of  $ \pi_1(\Sigma)$ and  algebra automorphisms of
$\kk[ \pi_1(\Sigma)  ]$ and $\widehat {\kk[ \pi_1(\Sigma) ]}$. When
$\Sigma$ is compact and $\partial \Sigma$ is a circle, N.\ Kawazumi and Y.\
Kuno \cite{KK} generalized the   action of the Dehn twists on
$\widehat {\kk[ \pi_1(\Sigma) ]}$ to arbitrary (not necessarily
simple) loops on~$\Sigma$.  Their definition uses so-called
 symplectic expansions  of  $ \pi_1(\Sigma)$, see \cite{Ma}.  The
present paper arose from our desire to avoid  the use of symplectic
expansions and to generalize the Kawazumi--Kuno automorphisms to all
oriented surfaces. One simplification achieved here consists in
replacing algebra automorphisms of the completed group algebras by
group automorphisms of the Malcev completions.

The key ingredient in our approach is the homotopy intersection form on surfaces
introduced by the second named author in    [Tu1].
A version of this form was  implicit already   in the work of C. Papakyriakopoulos  [Pa]
who studied Reidemeister's equivariant intersection pairings on surfaces.
Axiomatizing the homotopy intersection
form, we introduce a  notion of a Fox pairing in the group algebra ${A}=
\kk [\pi]$ of an arbitrary group $\pi$. 
Let $I\subset A$ be the
fundamental ideal of $A$ defined as the kernel of the augmentation
homomorphism $\aug: {A} \to \kk$ carrying $\pi\subset A$ to~$1$. A
Fox pairing   in $A$ is a $\kk$-bilinear pairing $\eta:A\times A\to
A$ such that $1\in A$ lies in both left and right annihilators and
the restriction of $\eta$ to $I\times I$ is left $A$-linear in the
first variable and right $A$-linear in the second variable.
Similar pairings were studied
 in \cite{Pa} and \cite{Tu}. 
  A Fox  pairing $\eta$ determines a $\kk$-valued bilinear form $\mediumdott{\eta}$ on
  $H_1(  \pi;\kk)$ which  generalizes the usual  intersection form in   the homology of a surface.

The   general algebraic framework for   Dehn--type twists on a group
$\pi$ involves a choice of a commutative ring $\kk\supset \QQ$ and a
choice of a Fox pairing  $\eta$ in ${A}= \kk [\pi]$ (more generally,
one may start from a Fox pairing in $\widehat {A}=
\underleftarrow{\lim} \, A/I^m$).
  For every $a\in \hat \pi$
  such that the homology class $[a]\in  H_1(  \pi;\kk)$ satisfies $[a]\mediumdott{\eta} [a]=0$,
we   define a 1-parameter family $(t_{k,a})_{k\in \kk}$ of
automorphisms of   $\hat \pi$ called    the {\it twists}. The
definition of the twists goes by exponentiating certain derivations
of $\widehat {A}$ determined by $\eta$. Among   properties of the
twists $t_{k,a}$ established here, note    that they
 depend only on the conjugacy class of $a$ and $t_{k,a^{-1}}= t_{k,a}$. Using the
canonical homomorphism $\pi\to \hat \pi$ we can derive automorphisms
of $\hat \pi$ from conjugacy classes in $\pi$.

Our main example concerns  the surfaces. For a  connected oriented
  surface $\Sigma$
with non-empty boundary, the group  $\pi= \pi_1( \Sigma, \ast) $
with $\ast \in
\partial \Sigma$   has a natural Fox pairing $\eta$ which is (essentially) the homotopy intersection
form of \cite{Tu}. Here the form $\mediumdott{\eta}$ is
skew-symmetric. Thus,   any
 conjugacy class   in $ \hat \pi$ yields a 1-parameter family
  of automorphisms of $\hat \pi$ called the
generalized Dehn twists. Every   closed curve $C$ in $\Sigma$
represents a conjugacy class   in $\pi$ and determines in this way a
family $(t_{k,C})_{k\in \kk}$ of automorphisms of $\hat \pi$. We
prove that  if $C$ is simple (i.e., has no self-intersections), then
$t_{1/2,C} $ is the automorphism of $\hat \pi$ induced by the Dehn
twist about $C$. One interesting application     is that for any
integer $N\geq 2$, the action of the Dehn twist   on $\hat \pi$ has
a canonical $N$-th root $t_{1/{2N},C} \in \Aut \hat \pi$. We show
that when $\Sigma$ is compact, $\partial \Sigma \cong S^1$, and $\kk=\QQ$,
the Kawazumi--Kuno automorphism  of $\widehat {\kk[ \pi ]}$
determined by any closed curve $C$ in $\Sigma$ is the extension of
$t_{1/2,C} \in \Aut \hat \pi$.

We obtain similar results for closed curves on surfaces without boundary,
and without base point. In this case, the generalized Dehn twists belong to the
outer automorphism group of $\hat \pi$.

The paper consists of the algebraic part, Sections
\ref{section0}--\ref{nondegenerate}, and the geometric part,
Sections \ref{loops}--\ref{KK}. In the algebraic part we introduce
and study Fox pairings in  group algebras and define the associated
twists. In Sections \ref{loops}--\ref{Twists on arbitrary oriented
surfaces}, we recall the definition of the homotopy intersection
forms on surfaces and define the generalized Dehn twists about
curves. In Section \ref{KK} we show that for compact surfaces with
boundary $S^1$   our definition is equivalent to the one due to
N. Kawazumi and   Y. Kuno. The paper ends with an appendix where we
collect  some classical identities for the logarithm and the
exponential series frequently used in the   paper.

Soon after appearance of   this paper in the arXiv, 
N. Kawazumi and Y. Kuno informed us that they had obtained similar results 
for based oriented surfaces, see   \cite{KK_new}.

{\it Acknowledgement.} The work of G.\ Massuyeau was partially
supported by the French ANR research project ANR-08-JCJC-0114-01.
The work of V.\ Turaev was partially supported by the NSF grant
DMS-0904262. A  part of this paper was written  during  a  visit of
the authors to  the Centre for Quantum Geometry of Moduli Spaces, 
at Aarhus University in summer 2011;  the authors    thank  QGM  for hospitality.

  \section{Fox pairings and the derived forms}\label{section0}

   We fix  a group $\pi$ and a commutative ring  $\kk$. We use
notation $A=\kk[\pi]$, $\aug:A\to \kk$, and $ I=\Ker (\aug) $ as
above. In this section, we  introduce and study Fox pairings in $A$.
In particular, we  derive from any such pairing a certain bilinear
form $A\times A\to A$ which shall play a crucial role in the sequel.

 \subsection{Derivations and Fox derivatives}\label{kd01}   A {\it derivation} of  a $\kk$-algebra $L$ is a $\kk$-linear homomorphism $d:L\to L$ such that
   $d(ab)= d(a) b+ a d(b)$    for all $a,b \in  L$. The
derivations of $L$ form a Lie algebra  $\Der (L)$ over $\kk$
 with Lie  bracket $[d_1, d_2]=d_1d_2-d_2d_1$.

We shall approach derivations of  the algebra $A=\kk[\pi]$  via the  Fox calculus.
Recall that a  $\kk$-linear map $\partial :{A} \to {A}$ is   a  {\it  left}  (resp.\  a {\it right) Fox derivative}   if for all $a,b\in {A}$, we have
$\partial (ab)=\partial ( a) \aug (b) + a \partial (b)$  (resp.\
$\partial (ab)=\partial ( a) b +  \aug (a)  \partial (b)$).  Any Fox derivative carries $1\in A$ to $0$.

   For example, for   $e\in A$, the map ${A} \to {A}$ carrying any
$a\in A$ to $(a-\aug(a))e$ is   a left   Fox derivative and  the map
${A} \to {A}$ carrying any $a\in A$ to $e(a-\aug(a))$ is   a right
Fox derivative.

Multiplying the values of a left Fox derivative on  the right  by an
element of $A$ we obtain again a left Fox derivative. In this way,
the left  Fox derivatives form a right $A$-module denoted $D_l$.
Similarly, the right Fox derivatives form a left  $A$-module $D_r$.
Restricting the   derivatives to $I\subset A$, we obtain an
isomorphism of  $D_l$ (resp.\ $D_r$) onto the $\kk$-module of
$\kk$-linear homomorphisms $I\to A$ that are  $A$-linear on the left
(resp.\ on the right). There is a canonical $\kk$-linear isomorphism
$D_l\simeq  D_r$. It sends  a left  Fox derivative $\partial$ into
the right Fox derivative carrying any $a\in A$ to $\overline
{\partial (\overline a)}$. Here  and below  the  overbar denotes the   canonical
$\kk$-linear involution in $A$  inverting the elements of $\pi$.

   The following lemma   produces  derivations   from   Fox
derivatives.     First, a piece of notation. Any element $u$ of
${A}$ expands uniquely as   $u= \sum_{x\in \pi} k_x  x $ where
$k_x\in \kk$ and $k_x=0$ for all but a finite set of $x\in \pi$.
   For   $v\in {A}$, set $v^u=  \sum_{x\in \pi} k_x  x^{-1} v x \in {A}$. It is clear that $v^u$  is $\kk$-linear     in      both $u$ and $v$.
   Note the following obvious identities: for any $u,v\in A$ and
$a\in \pi$, \begin{equation}\label{obiousids}a v^{ua}=v^u a \quad
{\text {and}} \quad (av)^{au}=(va)^u.\end{equation}

     \begin{lemma}\label{le1} Let  $\partial :{A} \to {A}$ be a right Fox derivative and $v\in {A}$.  Let $d:{A} \to {A}$  be the unique $\kk$-linear map carrying any $a\in \pi$ to $   a v^{\partial (a)} $.    Then  $d$ is a  derivation. Similarly, if  $\partial :{A} \to {A}$ is a left Fox derivative and $v\in {A}$, then the $\kk$-linear map $ {A} \to {A}$   carrying any $a\in \pi$ to $   v^{\overline{\partial (a)}}a $  is a  derivation.
\end{lemma}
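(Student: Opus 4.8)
The plan is to verify the Leibniz rule $d(ab)=d(a)\,b+a\,d(b)$ directly. Both sides are $\kk$-bilinear in $(a,b)\in A\times A$ and $\pi$ is a $\kk$-basis of $A$, so it suffices to treat $a,b\in\pi$. For such $a,b$ the product $ab$ again lies in $\pi$, and since $\partial$ is a right Fox derivative with $\aug(a)=1$ we get $\partial(ab)=\partial(a)\,b+\partial(b)$. Using that $w\mapsto w^{u}$ is $\kk$-linear in $u$, this gives $d(ab)=ab\,v^{\partial(ab)}=ab\,v^{\partial(a)b}+ab\,v^{\partial(b)}$. The second summand equals $a\bigl(b\,v^{\partial(b)}\bigr)=a\,d(b)$. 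For the first summand the key observation is that the first identity of \eqref{obiousids}, namely $b\,v^{\partial(a)b}=v^{\partial(a)}b$, turns it into $a\,v^{\partial(a)}\,b=d(a)\,b$. Adding the two pieces gives the claim.

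For the statement about a left Fox derivative $\partial$, the same scheme works: for $a,b\in\pi$ one has $\partial(ab)=\partial(a)+a\,\partial(b)$, hence, since the overbar is an anti-automorphism of $A$ inverting $\pi$, $\overline{\partial(ab)}=\overline{\partial(a)}+\overline{\partial(b)}\,a^{-1}$. Therefore $d(ab)=v^{\overline{\partial(ab)}}ab=v^{\overline{\partial(a)}}ab+v^{\overline{\partial(b)}a^{-1}}ab$. The first term is $d(a)\,b$, and rewriting the first identity of \eqref{obiousids} (replace $u$ by $ua^{-1}$) as $v^{ua^{-1}}a=a\,v^{u}$ converts the second term into $a\,v^{\overline{\partial(b)}}b=a\,d(b)$.

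Alternatively, I would note that the second assertion is formally equivalent to the first: a short check shows that $\delta\mapsto\bigl(x\mapsto\overline{\delta(\overline{x})}\bigr)$ sends derivations of $A$ to derivations, that $\overline{w^{u}}=\overline{w}^{\,u}$ for all $u,w\in A$, and that $a\mapsto\overline{\partial(\overline{a})}$ is a right Fox derivative whenever $\partial$ is a left one (this last point is recalled in the text). Feeding the right Fox derivative $a\mapsto\overline{\partial(\overline{a})}$ and the element $\overline{v}$ into the first part and then conjugating the resulting derivation by the overbar produces precisely the map $a\mapsto v^{\overline{\partial(a)}}a$. In all of this I do not anticipate a genuine obstacle; the only point requiring care is matching the correct form of the identities in \eqref{obiousids} to each of the two cases.
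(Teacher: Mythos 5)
Your argument is correct and follows the paper's proof essentially verbatim: the first part uses exactly the identity $b\,v^{ub}=v^{u}b$ from \eqref{obiousids} as the paper does, and your second part (together with the alternative reduction via $\partial\mapsto\overline{\partial(\overline{\,\cdot\,})}$) just fills in the details that the paper dismisses with ``proved similarly.''
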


\begin{proof}  It is enough to prove that
$d(ab)= ad(b)+ d(a) b$ for all $a,b \in \pi$. We have
\begin{eqnarray*}
d(ab)&=&  ab v^{\partial (ab)} \ = \  ab v^{\partial (a) b+\partial (b)} \\
&= & ab v^{\partial (a) b} + ab v^{\partial (b)}
\ = \ av^{\partial (a)} b+ ab v^{\partial (b)} \ = \   d(a) b+ a d(b)
\end{eqnarray*}
where we use the first of the identities \eqref{obiousids} with $a$
replaced by $b$ and $u=\partial (a)$. The second claim of the lemma
is proved similarly.
\end{proof}

 \subsection{Fox pairings}\label{kd2}  By a {\it Fox pairing} or shorter an {\it F-pairing} in ${A}$ we  mean a  $\kk$-bilinear map $\eta:{A} \times {A} \to {A}$ which is a left Fox derivative with respect to the first variable
 and a right Fox derivative with respect to the second variable. Note for the record the  product formulas
\begin{equation}\label{equ1}\eta (a_1 a_2, b) =\eta (a_1, b) \aug (a_2) + a_1  \eta (a_2, b) \quad {\rm for \,\,\, any} \quad a_1,a_2, b \in {A},\end{equation}
\begin{equation}\label{equ2}\eta (a, b_1 b_2)= \eta (a, b_1) b_2 + \aug (b_1) \eta (a, b_2) \quad {\rm for \,\,\, any} \quad a, b_1,b_2  \in {A}.\end{equation}

Substituting $a_1=a_2=1$ in \eqref{equ1} we obtain
 $\eta (1, {A})=0$
 . Similarly, $\eta ({A}, 1)=0$.
 It is clear that the restriction of $\eta$ to $I \times I \subset A\times A$
is linear with respect to  left multiplication of the first variable by elements of ${A}$   and with
respect to  right multiplication of the second variable by elements of ${A}$. Therefore $\eta (I^m, I^n) \subset I^{m+n-2}$ for all $m,n\geq 1$.

Similarly, we define a  {\it T-pairing} in ${A}$ 
as a  $\kk$-bilinear map $\lambda:{A} \times {A} \to {A}$ which satisfies 
\begin{equation}\label{equ3}
\lambda (a_1 a_2, b) =\lambda (a_1, b) \aug (a_2) + a_1  \lambda (a_2, b) 
\quad {\rm for \,\,\, any} \quad a_1,a_2, b \in {A},
\end{equation}
\begin{equation}\label{equ4}
\lambda (a, b_1 b_2)= \lambda (a, b_1) \aug (b_2)  +  \lambda (a, b_2) \overline {b_1} 
\quad {\rm for \,\,\, any} \quad  a, b_1,b_2  \in {A}.
\end{equation}
A T-pairing
$\lambda$ determines an F-pairing $\eta$ by setting $\eta(a,b)=\lambda (a,b)b$ 
for all $a,b\in \pi$ and then extending to $A$
by linearity. This establishes a bijection between T-pairings  and
F-pairings  and shows that these two notions are essentially
equivalent. The T-pairings were first  studied in \cite{Pa} under the name of ``biderivations'' 
and in \cite{Tu} under the name of ``$\Delta$-forms''.

The F-pairings in $A$ form a   $\kk$-module,  $F(A)$, in the obvious
way. Any   $e\in A$ gives rise to  an F-pairing
\begin{equation}\label{inner} \eta_e(a,b)=(a-\aug (a)) \, e\,
(b-\aug(b)) . \end{equation}   We call such F-pairings   {\it inner}.  Further F-pairings may be obtained using  the modules
$D_l$, $D_r$ introduced in Section \ref{kd01} and the $\kk$-linear
map   $D_l\otimes_A D_r \to F(A)$  carrying   $\partial  \otimes
\partial'   \in D_l\otimes_A D_r$ to the F-pairing
  $ (a,b)\mapsto \partial(a)  \partial' (b)$.

   \subsection{The induced  forms}\label{kd03} Given an  F-pairing
$\eta :{A} \times {A} \to {A}$, we set
  $ a \mediumdott{\eta} b=\aug (\eta (a,b))\in \kk$ for any $a, b\in A$.   It follows from \eqref{equ1} and \eqref{equ2} that
    the restriction of  the resulting pairing $\mediumdott{\eta}  :A\times A\to \kk$ to $\pi \times \pi$ is  additive in each variable.
    This restriction induces  a $\kk$-bilinear    form $H \times H \to \kk$ where  $H=H_1(\pi;\kk)=\kk \otimes_{\ZZ} \pi/[\pi, \pi] $.
     This form is denoted by the same symbol $\mediumdott{\eta}$ and is called the {\it  homological form    induced by~$\eta$}.

 The  F-pairing $\eta  $ gives rise
 to  a  {\it  right  derived   form} $\sigma^\eta:{A} \times {A}\to {A}$ as follows.
For any $a, b\in \pi$, set
$$\sigma^\eta (a,b)= b  a^{\eta (a,b)} = b \sum_{x\in \pi}  k_x  x^{-1} a x \in {A}$$
where $\eta (a,b)=\sum_{x\in \pi} k_x x$ with $k_x\in \kk$. Then
extend the resulting map $\pi\times \pi \to {A}$ to ${A} \times {A}$
by $\kk$-bilinearity. The {\it left derived form}  is the
$\kk$-bilinear map ${}^\eta\sigma:{A} \times {A}\to {A}$ carrying
any pair $(a,b)\in \pi \times \pi$ to $b^{\overline {\eta (a,b)}}
a$. The  left and the right derived forms have  similar properties.
We will focus on the right derived form~$\sigma^\eta$. When there is
no ambiguity, it is called the \emph{derived form} of $\eta$ and
denoted $\sigma$.

We have  $\aug (\sigma (a,b))= a\mediumdott{\eta} b $ for all $a,b\in A$. This follows from the definitions  for $a,b\in \pi$ and extends to all $a,b$ by bilinearity.
The equalities $\eta (1, {A})= \eta ({A}, 1)=0$ imply that $\sigma (1, {A})=\sigma ({A}, 1)=0$.
We now state a few properties of $\sigma$.

\begin{lemma}\label{le2-} For any  $a\in {A}$, the map   ${A}\to {A} , b \mapsto \sigma(a,b)$ is a derivation of  ${A}$.
\end{lemma}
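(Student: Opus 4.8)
The plan is to reduce the claim to the derivation lemma already proved, namely Lemma~\ref{le1}. Fix $a\in A$; by bilinearity of $\sigma$ it suffices to treat the case $a\in\pi$. Consider the $\kk$-linear map $\partial\colon A\to A$ defined by $\partial(b)=\eta(a,b)$. Since $\eta$ is a right Fox derivative in the second variable, $\partial$ is a right Fox derivative. By the definition of the right derived form, for $b\in\pi$ we have $\sigma(a,b)=b\,a^{\eta(a,b)}=b\,a^{\partial(b)}$. This is exactly the formula appearing in the first assertion of Lemma~\ref{le1}, with the roles played there by $\partial$ and $v$ taken here by our $\partial$ and by $v=a$, and with the ``variable'' element of $\pi$ called $b$ instead of $a$. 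Hence the map $b\mapsto b\,a^{\partial(b)}$, extended $\kk$-linearly from $\pi$ to $A$, is a derivation of $A$; but that extended map is precisely $b\mapsto\sigma(a,b)$, since $\sigma$ is defined by $\kk$-bilinear extension from $\pi\times\pi$. This proves the lemma.

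A couple of small points should be checked so the application of Lemma~\ref{le1} is legitimate. First, one must confirm that $\partial=\eta(a,-)$ really is a right Fox derivative: this is immediate from \eqref{equ2}, which gives $\partial(b_1b_2)=\eta(a,b_1b_2)=\eta(a,b_1)b_2+\aug(b_1)\eta(a,b_2)=\partial(b_1)b_2+\aug(b_1)\partial(b_2)$, matching the defining identity for a right Fox derivative with the variables renamed. Second, one should note that the $\kk$-linear extension from $\pi$ to $A$ in Lemma~\ref{le1} and in the definition of $\sigma$ are the same operation, so the derivation produced by Lemma~\ref{le1} coincides on all of $A$ with $b\mapsto\sigma(a,b)$, not merely on $\pi$.

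I do not expect any serious obstacle here: the statement is essentially a repackaging of Lemma~\ref{le1}, and the only care needed is bookkeeping of which variable is being held fixed and which is the derivation variable. If one preferred a self-contained argument avoiding the citation, one could instead verify $\sigma(a,b_1b_2)=\sigma(a,b_1)\,b_2+b_1\,\sigma(a,b_2)$ directly for $b_1,b_2\in\pi$, expanding $\sigma(a,b_1b_2)=b_1b_2\,a^{\eta(a,b_1b_2)}=b_1b_2\,a^{\eta(a,b_1)b_2}\cdot a^{\aug(b_1)\eta(a,b_2)}$ and using the identity $b_2\,a^{ub_2}=(a^{u})^{\phantom{u}}\!b_2$ from the first equation of \eqref{obiousids} (with the names of the elements permuted appropriately) to move $b_2$ past the conjugation; this is the same computation that underlies the proof of Lemma~\ref{le1}.
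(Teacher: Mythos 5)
Your proof is correct and is exactly the paper's argument: the paper's proof of this lemma consists of the single remark that it ``directly follows from the definitions and Lemma~\ref{le1},'' which is precisely the reduction you carry out (with $\partial=\eta(a,-)$ as the right Fox derivative and $v=a$). One tiny caveat about your optional self-contained sketch at the end: since $a^{u}$ is \emph{additive} in the exponent $u$, the expansion should read $a^{\eta(a,b_1)b_2+\aug(b_1)\eta(a,b_2)}=a^{\eta(a,b_1)b_2}+\aug(b_1)\,a^{\eta(a,b_2)}$, a sum rather than the product you wrote, but this does not affect your main argument.
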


\begin{proof}  This directly follows from the definitions and  Lemma \ref{le1}.
\end{proof}

  \begin{lemma}\label{le3}  For any $m\geq 1$, we have  $\sigma  (I^m, A)\subset I^{m -1}$.
\end{lemma}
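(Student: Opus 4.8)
The plan is to compute $\sigma$ explicitly on the generators of $I^m$. Since $\sigma(\cdot,b)$ is $\kk$-linear for each fixed $b$, since $I^m$ is $\kk$-spanned by the products $(x_1-1)\cdots(x_m-1)$ with $x_1,\dots,x_m\in\pi$, and since $\sigma$ is $\kk$-bilinear (so it suffices to take $b\in\pi$), the statement is equivalent to
\[
\sigma\big((x_1-1)\cdots(x_m-1),\,b\big)\in I^{m-1}\qquad\text{for all }x_1,\dots,x_m,b\in\pi .
\]

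First I would establish a product formula for $\sigma$ on a product of group elements. Since $\eta(\cdot,b)$ is a left Fox derivative, iterating \eqref{equ1} gives $\eta(a_1\cdots a_k,b)=\sum_{t=1}^{k}a_1\cdots a_{t-1}\,\eta(a_t,b)$ for $a_1,\dots,a_k\in\pi$. Inserting this into $\sigma(a,b)=b\,a^{\eta(a,b)}$, using $\kk$-linearity of $v\mapsto v^{u}$ in $u$, and using the identity $v^{gu}=(g^{-1}vg)^{u}$ for $g\in\pi$, $u\in A$ (immediate from the definition of $v^{u}$, as left multiplying $u$ by $g$ conjugates $v$ by $g$ inside each term), one gets
\[
\sigma(a_1\cdots a_k,b)=b\sum_{t=1}^{k}\big(a_ta_{t+1}\cdots a_k\,a_1\cdots a_{t-1}\big)^{\eta(a_t,b)} ,
\]
where the $t$-th summand is the $\eta(a_t,b)$-twist of the cyclic rotation of $a_1\cdots a_k$ beginning at $a_t$.

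Next, I would expand $(x_1-1)\cdots(x_m-1)=\sum_{S\subseteq\{1,\dots,m\}}(-1)^{m-|S|}P_S$, where $P_S$ is the product of the $x_i$ with $i\in S$ taken in increasing order of $i$, and apply the product formula to each $P_S$. Interchanging the summations — first over the index $j$ in the role of $a_t$, then over subsets $S\ni j$, which we write as $\{j\}\sqcup T_{<}\sqcup T_{>}$ with $T_{<}\subseteq\{1,\dots,j-1\}$ and $T_{>}\subseteq\{j+1,\dots,m\}$ — the relevant cyclic rotation of $P_S$ equals $x_j\big(\prod_{i\in T_{>}}x_i\big)\big(\prod_{i\in T_{<}}x_i\big)$; hence by $\kk$-linearity of $(\cdot)^{\eta(x_j,b)}$ the inner sum factors and collapses via the elementary identity $\sum_{T\subseteq J}(-1)^{|T|}\prod_{i\in T}x_i=\prod_{i\in J}(1-x_i)$ (products in increasing order). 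After collecting signs this yields
\[
\sigma\big((x_1-1)\cdots(x_m-1),b\big)=b\sum_{j=1}^{m}\Big(x_j\prod_{i=j+1}^{m}(x_i-1)\,\prod_{i=1}^{j-1}(x_i-1)\Big)^{\!\eta(x_j,b)} .
\]

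Finally, put $V_j=x_j\prod_{i=j+1}^{m}(x_i-1)\prod_{i=1}^{j-1}(x_i-1)$. Then $V_j\in I^{m-1}$: it is a unit of $A$ times $m-1$ factors lying in $I$, and $I^{m-1}$ is an ideal. Conjugation by any element of $\pi$ is a $\kk$-algebra automorphism of $A$ commuting with $\aug$, hence preserving $I^{m-1}$; therefore $V_j^{\eta(x_j,b)}$, being a $\kk$-linear combination of conjugates of $V_j$, lies in $I^{m-1}$, and so does $bV_j^{\eta(x_j,b)}$. Summing over $j$ gives the claim. I expect the only real difficulty to be bookkeeping in the middle step: keeping the noncommutative products in the right order and tracking the signs under the interchange of summations. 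Everything else is immediate from the definitions and \eqref{equ1}.
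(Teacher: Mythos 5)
Your proposal is correct and follows essentially the same route as the paper's proof: reduce to generators $(x_1-1)\cdots(x_m-1)$, derive the cyclic-rotation product formula for $\sigma(a_1\cdots a_k,b)$ from iterating \eqref{equ1} and the conjugation identity, expand over subsets, and collapse the inner alternating sums to recover the factors $(x_i-1)$, leaving $m-1$ factors in $I$ up to conjugation. The paper carries out the same computation with the subsets indexed as tuples $i_1<\cdots<i_r$ rather than as $\{j\}\sqcup T_<\sqcup T_>$, but the reorganization and the final expression $b\sum_j V_j^{\eta(x_j,b)}$ are identical.
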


\begin{proof}    The ideal $I^m$   is $\kk$-linearly generated   by the products   $$\Pi=(a_1-1) \cdots (a_m-1)$$ where $a_1, \ldots, a_m \in  \pi$.  It is enough to prove that  $\sigma (\Pi, b)\in  I^{m -1}$ for each such $\Pi$ and any
$ b \in \pi$. By the bilinearity of $\sigma$ and the property $\sigma (1, A)=0$,
$$\sigma (\Pi, b)
=\sum_{r=1}^m \, \sum_{1\leq i_1< \cdots < i_r \leq m} (-1)^{m-r} \sigma (a_{i_1}\cdots a_{i_r}, b). $$
Observe that $$\eta (a_{i_1}\cdots a_{i_r}, b)=\sum_{s=1}^{r} a_{i_1}\cdots a_{i_{s-1}} \eta (a_{i_s}, b).$$
For all $i=1, \ldots, m$, we expand $\eta (a_{i}, b)=\sum_{x\in \pi} n^i_x x$ with $n^i_x\in \kk$.
Then
\begin{eqnarray*}
\sigma (a_{i_1}\cdots a_{i_r}, b)&=& b \, (a_{i_1}\cdots a_{i_r})^{\eta (a_{i_1}\cdots a_{i_r}, b)}\\
&=&b \sum_{s=1}^{r} (a_{i_1}\cdots a_{i_r})^{a_{i_1}\cdots a_{i_{s-1}} \eta (a_{i_s}, b)}\\
&=& b \sum_{s=1}^{r} \sum_{x\in \pi} n^{i_s}_x (a_{i_1}\cdots a_{i_{s-1}} x)^{-1}
 (a_{i_1}\cdots a_{i_r}) (a_{i_1}\cdots a_{i_{s-1}} x)\\
&=&  b \sum_{s=1}^{r} \sum_{x\in \pi} n^{i_s}_x   x^{-1}
 a_{i_s}a_{i_{s+1}} \cdots a_{i_r}a_{i_1}\cdots a_{i_{s-1}} x.
\end{eqnarray*}
Substituting this formula in the expansion of $\sigma (\Pi, b)$ above, we obtain that
$$
\sigma(\Pi,b) =  b \sum_{x\in \pi}  x^{-1} T_{x} x,\\
$$
where $T_{x}$ denotes the triple sum
\begin{eqnarray*}
&   & \sum_{r=1}^m\ \sum_{1\leq i_1< \cdots < i_r \leq m}\ \sum_{s=1}^{r}\  (-1)^{m-r}
  n^{i_s}_x   a_{i_s}a_{i_{s+1}} \cdots a_{i_r}a_{i_1}\cdots a_{i_{s-1}} \\
& = &   \sum_{j=1}^m\ \sum_{\substack{0\leq t \leq j-1\\ 0\leq u \leq m-j}}\
\sum_{\substack{ 1 \leq k_1 <\cdots < k_t <j \\ j< l_1 <\cdots <l_u \leq m }} (-1)^{m-(t+u+1)}
 n^{j}_x   a_{j}a_{l_1} \cdots a_{l_u}a_{k_1}\cdots a_{k_t} \\
& = &  \sum_{j=1}^m\ \sum_{\substack{0\leq t \leq j-1\\ 0\leq u \leq m-j}}\
\sum_{\substack{ 1 \leq k_1 <\cdots < k_t <j \\ j< l_1 <\cdots <l_u \leq m }} (-1)^{(j-1)-t} (-1)^{(m-j)-u}
 n^{j}_x   a_{j}a_{l_1} \cdots a_{l_u}a_{k_1}\cdots a_{k_t} \\
&=&  \sum_{j=1}^m   n^{j}_x    a_{j}(a_{j+1}-1)\cdots (a_m-1)(a_1-1)\cdots (a_{j-1}-1) .
\end{eqnarray*}
Therefore $\sigma (\Pi, b)\in  I^{m -1}$.
\end{proof}

\begin{lemma}\label{le4}  For any $m,n\geq 1$, we have  $\sigma  (I^m, I^n)\subset I^{m+n-2}$.
Moreover,   for any $c_1,\dots, c_m,d_1,\dots, d_n \in I$, we have
the following congruence modulo $I^{m+n-1}$:
$$
\sigma(c_1\cdots  c_m,d_1\cdots d_n) \equiv \sum_{i=1}^m
 \sum_{j=1}^n
(c_i \mediumdott{\eta} d_j)
d_1 \cdots d_{j-1}\left(c_{i+1}\cdots c_m c_1 \cdots c_{i-1}\right) d_{j+1} \cdots d_n.
$$
\end{lemma}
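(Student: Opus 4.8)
The plan is to reduce the whole statement to the case $n=1$ by differentiating in the second variable, and then to handle that case by reading off the $I$-adic leading term of the explicit formula for $\sigma(\Pi,b)$ obtained in the proof of Lemma~\ref{le3}. Concretely, fix $a=c_1\cdots c_m\in I^m$; by Lemma~\ref{le2-} the map $\sigma(a,-)\colon A\to A$ is a derivation, so the Leibniz rule gives
\[
\sigma(a,\,d_1\cdots d_n)=\sum_{j=1}^n d_1\cdots d_{j-1}\,\sigma(a,d_j)\,d_{j+1}\cdots d_n .
\]
Each $\sigma(a,d_j)$ lies in $I^{m-1}$ by Lemma~\ref{le3}, while $d_1\cdots d_{j-1}\in I^{j-1}$ and $d_{j+1}\cdots d_n\in I^{n-j}$, so every summand lies in $I^{m+n-2}$; since $I^n$ is $\kk$-spanned by products $d_1\cdots d_n$ with $d_k\in I$ and $\sigma$ is bilinear, this already gives $\sigma(I^m,I^n)\subseteq I^{m+n-2}$. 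The same display also shows that $\sigma(a,d_1\cdots d_n)$ is determined modulo $I^{m+n-1}$ by the classes of the $\sigma(a,d_j)$ modulo $I^m$, because altering $\sigma(a,d_j)$ by an element of $I^m$ changes its summand by an element of $I^{j-1}\cdot I^m\cdot I^{n-j}=I^{m+n-1}$. So the whole lemma will follow from the congruence $(\star)$: for all $c_1,\dots,c_m,d\in I$,
\[
\sigma(c_1\cdots c_m,\,d)\equiv\sum_{j=1}^m(c_j\mediumdott{\eta}d)\,c_{j+1}\cdots c_m\,c_1\cdots c_{j-1}\pmod{I^m}.
\]

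To prove $(\star)$ I would first observe that both sides are $\kk$-multilinear in $(c_1,\dots,c_m,d)$ and that $I$ is $\kk$-spanned by the elements $g-1$ with $g\in\pi$, so it suffices to treat $c_i=a_i-1$ and $d=b-1$ with $a_1,\dots,a_m,b\in\pi$. Using $\sigma(-,1)=0$, one has $\sigma(c_1\cdots c_m,d)=\sigma(\Pi,b)$ with $\Pi=(a_1-1)\cdots(a_m-1)$, and this is exactly the element expanded in the proof of Lemma~\ref{le3}: writing $\eta(a_j,b)=\sum_{x\in\pi}n^j_x x$ and $\Pi^{(j)}=(a_{j+1}-1)\cdots(a_m-1)(a_1-1)\cdots(a_{j-1}-1)\in I^{m-1}$, that proof gives
\[
\sigma(\Pi,b)=b\sum_{x\in\pi}x^{-1}T_x\,x,\qquad T_x=\sum_{j=1}^m n^j_x\,a_j\,\Pi^{(j)} .
\]
I would then simplify modulo $I^m$ using three immediate facts: (i) $a_j\Pi^{(j)}\equiv\Pi^{(j)}$, since $(a_j-1)\Pi^{(j)}\in I\cdot I^{m-1}$; (ii) $x^{-1}wx\equiv w$ for $w\in I^{m-1}$, since $x^{-1}wx-w=x^{-1}\bigl(w(x-1)-(x-1)w\bigr)\in I^m$; (iii) $bw\equiv w$ for $w\in I^{m-1}$, since $(b-1)w\in I^m$. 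These yield $\sigma(\Pi,b)\equiv\sum_{j=1}^m\bigl(\sum_{x\in\pi}n^j_x\bigr)\Pi^{(j)}\pmod{I^m}$, and since $\sum_{x\in\pi}n^j_x=\aug(\eta(a_j,b))=\aug(\eta(a_j-1,b-1))=(a_j-1)\mediumdott{\eta}(b-1)=c_j\mediumdott{\eta}d$ and $\Pi^{(j)}=c_{j+1}\cdots c_m c_1\cdots c_{j-1}$, this is precisely $(\star)$. Plugging $(\star)$ back into the first display (with $d=d_j$) and summing over $j$ then produces the asserted congruence.

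I expect the main obstacle to be $(\star)$ itself, that is, extracting the $I$-adic leading term of the rather opaque expression for $\sigma(\Pi,b)$ furnished by Lemma~\ref{le3}; the accompanying point that needs care is the filtration bookkeeping in the reduction — knowing each $\sigma(a,d_j)$ only modulo $I^m$ does pin down $\sigma(a,d_1\cdots d_n)$ modulo $I^{m+n-1}$, but only because the flanking products $d_1\cdots d_{j-1}$ and $d_{j+1}\cdots d_n$ lie in filtration degrees exactly $j-1$ and $n-j$. Everything else is routine manipulation with the $I$-adic filtration and the Leibniz rule.
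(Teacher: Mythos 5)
Your proof is correct and follows essentially the same route as the paper's: apply the Leibniz rule for the derivation $\sigma(c,-)$ to reduce to the case $n=1$, then extract the leading term modulo $I^m$ from the explicit expression for $\sigma(\Pi,b)$ obtained in the proof of Lemma~\ref{le3}. The only difference is that you spell out the mod-$I^m$ simplifications (your steps (i)--(iii)) and the filtration bookkeeping that the paper leaves implicit, which is a welcome clarification rather than a deviation.
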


   \begin{proof} Set $c=c_1\cdots c_m$  and $d=d_1\cdots d_n$.
Since $\sigma (c, -)$ is a derivation of $A$,
\begin{equation}\label{gw2}
\sigma (c, d)=\sum_{j=1}^n  d_1 \cdots d_ {j-1} \sigma (c, d_j) d_{j+1}\cdots d_n.
\end{equation}
By Lemma \ref{le3}, $ \sigma (c, d_j) \in I^{m-1}$. This implies
that $\sigma (c, d)\in I^{m+n-2}$ and proves the first claim of the
lemma. We now prove the second claim. Since $I$ is $\kk$-linearly
spanned by the set $\{a-1\}_{a\in \pi}$, we can assume that
$c_i=a_i-1$  for all $i=1,\dots,m$ and some $a_i \in \pi$ and
similarly  $d_j=b_j-1$  for all $j=1,\dots,n$ and some $b_j\in \pi$.
The proof of Lemma \ref{le3}   shows that  for all $j $,
$$
\sigma(c,d_j) = \sigma(c,b_j)
\equiv  \sum_{i=1}^m \sum_{x\in \pi}   n_{x}^{ij}
c_{i+1} \cdots c_m c_1 \cdots c_{i-1}  \ (\modu I^{m})
$$
where we have expanded $\eta(a_i,b_j)= \sum_{x\in \pi} n_{x}^{ij} x$
for all $i $. The required formula follows now from \eqref{gw2}
because $\sum_{x\in \pi}   n_{x}^{ij}=  a_i \mediumdott{\eta} b_j =
c_i \mediumdott{\eta}  d_j $.
\end{proof}

   \begin{lemma}\label{le2} We have $\sigma (ab,c)=\sigma (ba,c)$
for any $a, b,c \in A$.
\end{lemma}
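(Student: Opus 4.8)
The plan is to unfold the definitions and reduce everything to the elementary identities \eqref{obiousids}. Since $\sigma=\sigma^\eta$ is $\kk$-bilinear and, whenever $a,b\in A$, the elements $ab$ and $ba$ are $\kk$-linear combinations of products $gh$ and $hg$ with $g,h\in\pi$, it suffices to prove $\sigma(ab,c)=\sigma(ba,c)$ for $a,b,c\in\pi$.

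So fix $a,b,c\in\pi$ and set $u_1=\eta(a,c)$, $u_2=\eta(b,c)\in A$. Because $\eta$ is a left Fox derivative in its first variable, formula \eqref{equ1} together with $\aug(a)=\aug(b)=1$ gives
$$\eta(ab,c)=u_1+a\,u_2,\qquad \eta(ba,c)=u_2+b\,u_1.$$
Using that $v\mapsto v^u$ is $\kk$-linear in $u$, I would then expand directly from the definition of the derived form:
$$\sigma(ab,c)=c\,(ab)^{u_1}+c\,(ab)^{a u_2},\qquad \sigma(ba,c)=c\,(ba)^{u_2}+c\,(ba)^{b u_1}.$$

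The heart of the argument is to identify the two pairs of terms by means of the second identity in \eqref{obiousids}, i.e.\ $(av)^{au}=(va)^u$ for $a\in\pi$, $u,v\in A$. Applied with $v=b$ and $u=u_2$ it yields $(ab)^{a u_2}=(ba)^{u_2}$; applied with $a$ replaced by $b$, $v=a$, $u=u_1$, it yields $(ba)^{b u_1}=(ab)^{u_1}$. Substituting these, both $\sigma(ab,c)$ and $\sigma(ba,c)$ become $c\,(ab)^{u_1}+c\,(ba)^{u_2}$, which proves the claim.

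I do not foresee any genuine difficulty: the statement is a purely formal consequence of the defining property of $\eta$ in the first variable and of the combinatorial identities \eqref{obiousids}. The only points requiring care are orienting each application of $(av)^{au}=(va)^u$ correctly (keeping track of which factor plays the role of the conjugated element and which of the conjugating group element) and noting that the passage from $\pi\times\pi\times\pi$ back to $A\times A\times A$ is automatic by $\kk$-trilinearity of both sides in $(a,b,c)$.
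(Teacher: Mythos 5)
Your proof is correct and follows essentially the same route as the paper's: reduce to $a,b,c\in\pi$ by multilinearity, expand $\eta(ab,c)$ via the left Fox derivative property \eqref{equ1}, and apply the second identity of \eqref{obiousids} to match the terms. The paper simply computes $\sigma(ab,c)$ once and observes the result is symmetric in $a$ and $b$, whereas you compute both sides explicitly; the content is identical.
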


\begin{proof}
By the $\kk$-bilinearity of $\sigma$, it is enough to consider the case where $a,b,c\in \pi$.
We have
\begin{eqnarray*}
\sigma (ab,c )& = & c (ab)^{\eta(ab,c)} \ =  \ c (ab)^{\eta(a,c) +a \eta(b,c)}\\
&= &c (ab)^{\eta(a,c) }+ c (ab)^{a \eta(b,c)} \ =  \  c  (ab)^{\eta (a,c)} + c (ba)^{\eta (b,c)}
\end{eqnarray*}
where we use the second   of the identities
\eqref{obiousids}. The right-hand side is preserved under the
permutation of $a$ and $b$.
\end{proof}

 \subsection{Equivalence and transposition of F-pairings}\label{Equivalence and transposition}  
 We call two F-pairings in $A$ {\it equivalent} if their difference is   an  inner F-pairing   in the sense of Section \ref{kd2}. 
 Equivalent F-pairings induce the same  homological form and the same (left and right) derived forms.

Every F-pairing  $\eta$ in $A$ determines a {\it transposed} F-pairing $\overline \eta$ in $A$ by
$$\overline \eta(a,b)= a \, \overline {\eta (b,a)} \, b$$
for all $a,b\in \pi$. The transposition of F-pairings is involutive and compatible with the equivalence.
The left derived form ${}^\eta \sigma$ of an  F-pairing $\eta$ can be computed from the right derived form $\sigma^{\overline \eta}$ of $\overline \eta$ by ${}^\eta \sigma (a,b)=\sigma^{\overline \eta} (b,a)$ for all  $a,b\in A$.

We call an F-pairing $\eta$   {\it weakly skew-symmetric}   if the F-pairing $\eta+\overline \eta$ is   inner.   Then
${}^\eta \sigma (a,b)=-\sigma^{  \eta} (b,a)$ for all  $a,b\in A$.

 \subsection{Remarks}\label{kd2--}

  1.      It is easy to   describe Fox derivatives and Fox pairings in
$A$ when $\pi$ is a free group of finite rank $n$   with basis
$\{x_i\}_{i=1}^n$. Then any $a\in {A}$ expands uniquely as the sum
$\aug (a)+\sum_i a_i (x_i-1)$ with $a_i\in {A}$. For all $i$, the
map ${A} \to {A}, a\mapsto a_i$ is a left Fox derivative denoted
$\partial_i$. Also, any $a\in {A}$ expands uniquely as the sum $\aug
(a)+\sum_i (x_i-1) a^i$ with $a^i\in {A}$ and  for all $i$, the map
${A} \to {A}, a\mapsto a^i$ is a right Fox derivative denoted
$\partial^i$. For any $n\times n$ matrix $(\eta_{i,j})$ over ${A}$
there is a unique F-pairing $\eta$ in ${A}$ such that $\eta(x_i,
x_j)=\eta_{i,j}$ for all $i,j$. This pairing is computed by  $\eta
(a, b)=\sum_{i,j} \partial_i (a)  \eta_{i,j} \partial^j (b)$ for
$a,b\in {A}$. All F-pairings in $A$ arise  in this way.

  2.  Lemma \ref{le2} implies that $\sigma$ is invariant under
conjugations of the first variable: $\sigma (cac^{-1},b)=\sigma (a,
b)$ for any $a, b \in A$ and $c\in \pi$. This fact has a  weaker
analogue for the second variable.    Let $\check \pi$ be the set of
conjugacy classes of $\pi$ and let $ \kk [\check \pi]$ be the free
$\kk$-module with basis $\check \pi$. Let
 $q:{A} \to  \kk [\check \pi]$ be the $\kk$-linear map induced by the projection  $\pi \to \check \pi$. Then the
composition  $q   \sigma : {A} \times {A} \to   \kk [\check \pi]$ is
invariant under conjugations in both  variables.  The invariance
under conjugations of the second variable can be proven  similarly
to Lemma \ref{le2} or can be deduced from Lemma \ref{le2-} and  the
following general fact: for any derivation $d:{A}\to {A}$ and any
$b, c\in \pi$, we have $qd (cbc^{-1})=qd(b)$. We conclude that
$q\sigma$ induces a $\kk$-bilinear form $   \kk [\check \pi] \times
\kk [\check \pi] \to \kk [\check \pi]$.

    \section{Completions}\label{sectionCOCO}

 We   consider the fundamental completion $\hat A$ of the group algebra
 $A$
 and extend to $\hat A$ the definitions and   constructions of
 Section~\ref{section0}. We keep the notation of Section~\ref{section0}.

 \subsection{The algebra $\hat A$}\label{COCO1}
The powers of the fundamental ideal $I\subset A=\kk[\pi]$ form a
filtration $A=I^0 \supset I\supset I^2 \supset  \cdots$  by
two-sided ideals. Let $\hat A=\underleftarrow{\lim}\, A/I^m$ be the
completion of $A$ with respect to this filtration.  Clearly, $\hat
A$ is a unital $\kk$-algebra. It is  called the {\it fundamental
completion of} $A$. The projections $\{A\to A/I^m\}_m$ define an
algebra homomorphism $ A\to \hat A$ denoted by $\iota$. Generally
speaking, this homomorphism is not injective.

For any  integer $m\geq 0$ denote by $\hat A_m$ the kernel of the
projection $\hat A\to A/I^m$. Clearly,  $\hat A=\hat A_0\supset \hat
A_1 \supset \hat A_2\supset  \cdots $ is a filtration of $\hat A$ by
two-sided ideals, and the   completion of $\hat A$ with respect to
this filtration gives the same algebra $\hat A$. Note for the record
that $\hat A_m\hat A_n\subset  \hat A_{m+n}$ and $\iota (I^m)\subset
\hat A_m$ for all $m,n \geq 0$.

The algebra $\hat A$ has a canonical augmentation $\hat \aug: \hat A
\to \kk$ defined as the   projection   $\hat A \to \hat A/ \hat A_1
\simeq A/I$ followed by  $\aug: A/I \stackrel{\simeq}{\to} \kk$.

 To study comultiplications in $\hat A$, we endow the algebra $A\otimes A$ with the filtration
$$A\otimes A=(A\otimes A)_0\supset (A\otimes A)_1\supset (A\otimes
A)_2\supset \cdots$$ whose  for all $m\geq 0$, the $m$-th term is
the two-sided ideal
$$ (A\otimes A)_m= \sum_{p=0}^m I^p \otimes I^{m-p}.$$ Let $A \hat \otimes A$ be the completion of $A\otimes A$ with respect
to this filtration: $$A \hat \otimes A=\underleftarrow{\lim}\,
(A\otimes A)/(A\otimes A)_m.$$ The algebra $A \hat \otimes A$ is
endowed with the filtration whose $m$-th term $(A \hat \otimes A)_m$
is the kernel of the projection  $A \hat \otimes A\to (A\otimes
A)/(A\otimes A)_m$ for all $m\geq 0$. For any $a,b\in A$, the image
of $a\otimes b\in A\otimes A$ under the natural homomorphism
$A\otimes A\to A \hat \otimes A$ is denoted $a \hat \otimes b$. One
similarly defines filtered algebras $  A^{\hat \otimes 3}=A \hat
\otimes A  \hat \otimes A$, $  A^{\hat \otimes 4}=A \hat \otimes A
\hat \otimes A \hat \otimes A$, etc. Applying these definitions to
$\hat A$ rather than $A$, we obtain the same filtered algebras: it
is easy to check that $\iota:A\to \hat A$ induces an isomorphism of
filtered algebras $A^{\hat \otimes   n}\to \hat A^{\hat \otimes n}$
for all $n\geq 2$.

  The  algebra $A$ has  a standard comultiplication $\Delta: A \to A
\otimes A$ carrying any $x\in \pi$ to $x\otimes x$. The formulas
$$ \Delta (x-1) =x\otimes x- 1\otimes 1= x\otimes (x-1) + (x-1) \otimes x - (x-1)\otimes (x-1)$$
show  that $\Delta (I)\subset A\otimes I + I \otimes A$. Therefore
$\Delta$  is compatible with the filtrations in the sense that
$\Delta (I^m)\subset (A\otimes A)_m$ for all $m\geq 0$. Hence
$\Delta$   induces    an algebra homomorphism $\hat \Delta:\hat A
\to   A {\hat \otimes}   A=\hat A {\hat \otimes} \hat A$. The
algebra $\hat A$    with   ``comultiplication'' $\hat \Delta $ and
  counit $\hat \aug$ is a \emph{complete Hopf algebra} in the
sense of Quillen \cite{Qu}. The antipode in $\hat A$ is the
involution of $ \hat A$ induced by the involution $ a\mapsto
\overline{a}$ of~$A$.

For example, if $\pi$ is a  free group, then the homomorphism
$\iota:A \to \hat A$ is injective,  and one usually
    uses it to treat $A $
 as a subalgebra of $\hat A$. Given  a basis $\{x_i\}_i$ of
 $\pi$, one can identify
$\hat A$   with the algebra of formal power series in
 (non-commuting) variables $\{X_i\}_i$ with coefficients in
 $\kk$, see \cite{MKS}. One way to fix such an identification is to require that    $x_i=1+X_i$ and $x_i^{-1}=1-X_i+X_i^2- \cdots$ for all $i$. Then the ideal $\hat A_m\subset \hat A$ with $m\geq 1$ corresponds
 to the ideal of formal power series containing no monomials of degree
 $<
 m$.

\subsection{The group $\hat \pi$}\label{COCO2}
Let $\hat \pi=\hat \pi^{\kk}$ be the \emph{group-like} part of $\hat
A$. More precisely,
$$
 \hat \pi =
\left\{a \in \hat A: \hat \Delta(a) = a \hat\otimes a \hbox{ and }  a\neq 0 \right\}.
$$
This is a subgroup of the multiplicative group $1+ \hat A_1$. We
endow $\hat \pi$ with the canonical  filtration
 $\hat \pi=\hat \pi_{(1)} \supset \hat \pi_{(2)} \supset \hat \pi_{(3)} \supset
 \cdots$
where $\hat \pi_{(m)}= \hat \pi \cap (1+ \hat A_m)$ for all $m\geq
1$. It is easy to see that $\hat \pi_{(m)}$ is a normal subgroup of
$\hat \pi$ and $[\hat \pi_{(m)}, \hat \pi_{(n)}]\subset \hat
\pi_{(m+n)}$ for all $m,n \geq 1$.  The algebra homomorphism
$\iota:A \to \hat A$ restricts to a group homomorphism $ \pi \to
\hat \pi$ that carries the lower central series $\pi=\pi_1\supset
\pi_2\supset \cdots$ of $\pi$ into the canonical filtration of $\hat
\pi $. We call $\hat \pi$ the \emph{Malcev completion} of $\pi$ over
$\kk$. In the case $\kk=\QQ$, there is an isomorphism $\hat \pi
\simeq \underleftarrow{\lim} \left((\pi/\pi_m)\otimes \QQ\right)$
where $(\pi/\pi_m)\otimes \QQ$ is the  standard Malcev completion of
the nilpotent group $\pi/\pi_m$, i.e.\ its uniquely divisible
closure. For more on the Malcev completion, see \cite{Qu},
\cite{Je}.

\begin{lemma}\label{malt} If $\QQ\subset  \kk$, then
$\hat\pi/ \hat \pi_{(2)}\simeq H_1(\pi;\kk)$.
\end{lemma}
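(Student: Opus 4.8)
The plan is to exhibit an explicit isomorphism $\hat\pi/\hat\pi_{(2)} \xrightarrow{\ \simeq\ } H_1(\pi;\kk)$ by using the logarithm map to pass from the multiplicative group $\hat\pi$ to the additive structure of $\hat A_1/\hat A_2$, and then identifying the latter with $H_1(\pi;\kk)$. First I would observe that every $a\in\hat\pi$ lies in $1+\hat A_1$, so it can be written as $a = 1 + \alpha_1(a) + (\text{terms in }\hat A_2)$ with $\alpha_1(a)\in \hat A_1/\hat A_2 \simeq I/I^2$. The assignment $a\mapsto \alpha_1(a)\bmod \hat A_2$ is a group homomorphism $\hat\pi\to \hat A_1/\hat A_2$ (viewed additively): indeed, $(1+\alpha+\cdots)(1+\beta+\cdots) = 1 + (\alpha+\beta) + \cdots$ where the omitted terms lie in $\hat A_2$ because $\hat A_1\hat A_1\subset\hat A_2$. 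By definition $\hat\pi_{(2)} = \hat\pi\cap(1+\hat A_2)$ is precisely the kernel, so we get an injective homomorphism $\overline{\alpha_1}:\hat\pi/\hat\pi_{(2)} \hookrightarrow \hat A_1/\hat A_2 = I/I^2$.

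Next I would identify $I/I^2$ with $H_1(\pi;\kk) = \kk\otimes_\ZZ \pi/[\pi,\pi]$. This is the classical fact (Mayer--Vietoris / the standard computation for group rings): the map $\pi\to I/I^2$, $x\mapsto (x-1)\bmod I^2$, is additive since $(xy-1) - (x-1) - (y-1) = (x-1)(y-1)\in I^2$, kills commutators, and extends to a $\kk$-linear isomorphism $H_1(\pi;\kk)\xrightarrow{\simeq} I/I^2$ with inverse induced by any set-theoretic section of $\aug$. Composing, I get that $\overline{\alpha_1}$ lands in (and, I claim, surjects onto) the image of $H_1(\pi;\kk)$ inside $I/I^2$. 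So the remaining content is \textbf{surjectivity}: given $x\in\pi$, the element $\iota(x)\in\hat\pi$ has $\alpha_1(\iota(x)) = (x-1)\bmod I^2$, which shows the image of $\overline{\alpha_1}$ contains the image of $\pi/[\pi,\pi]$; but to hit all of $H_1(\pi;\kk)$ — i.e.\ all $\kk$-linear combinations — I need that $\hat\pi$ contains enough group-like elements, and this is exactly where the hypothesis $\QQ\subset\kk$ enters.

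\textbf{The main obstacle} is therefore surjectivity, and the key step is to produce group-like elements realizing arbitrary scalar multiples. The device is the exponential: for $\xi\in\hat A_1$ with $\hat\Delta(\xi) = \xi\hat\otimes 1 + 1\hat\otimes\xi$ (a ``primitive'' element), $\exp(\xi) = \sum_{n\geq 0}\xi^n/n!$ is well-defined in $\hat A$ because $\xi^n\in\hat A_n$ (here $n!$ is invertible by $\QQ\subset\kk$), and the standard identity $\hat\Delta(\exp\xi) = \exp(\hat\Delta\xi) = \exp(\xi\hat\otimes 1)\exp(1\hat\otimes\xi) = \exp(\xi)\hat\otimes\exp(\xi)$ — using that $\xi\hat\otimes 1$ and $1\hat\otimes\xi$ commute — shows $\exp(\xi)\in\hat\pi$, with $\alpha_1(\exp\xi) = \xi\bmod\hat A_2$. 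Given $x\in\pi$ and $t\in\kk$, applying this to $\xi = t\log(\iota(x))$ (where $\log(1+u) = \sum_{n\geq 1}(-1)^{n-1}u^n/n$ converges on $1+\hat A_1$, and $\log\iota(x)$ is primitive since $\iota(x)$ is group-like, cf.\ the appendix referred to in the paper) produces a group-like element whose degree-one part is $t(x-1)\bmod I^2$. Hence $\overline{\alpha_1}$ is onto, and combined with the injectivity above and the identification $I/I^2\simeq H_1(\pi;\kk)$ this completes the proof. I would streamline the write-up by invoking the logarithm/exponential identities from the appendix rather than re-deriving them, and by citing Quillen \cite{Qu} for the fact that $\log$ and $\exp$ give mutually inverse bijections between primitive and group-like elements when $\QQ\subset\kk$.
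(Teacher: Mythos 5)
Your proposal is correct and follows essentially the same route as the paper: the paper's map $\psi(a)=a-1\ (\modu \hat A_2)$ is your $\alpha_1$, its kernel is $\hat\pi_{(2)}$ by definition, and surjectivity is obtained exactly as you do, by taking $\exp(k\log(\iota(g)))\in\hat\pi$ for $g\in\pi$, $k\in\kk$, using that $\log$ and $\exp$ interchange group-like and primitive elements when $\QQ\subset\kk$. No gaps.
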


  \begin{proof} There is a $\kk$-linear isomorphism   $  I/I^2\simeq H_1(\pi;\kk)$ carrying $k(g-1) \, (\modu I^2)$ to
   $k[g]\in H_1(\pi;\kk)$ for all  $k\in \kk$ and $g\in \pi$. (The inverse isomorphism is induced by the group homomorphism
    $ g\mapsto g-1 \, (\modu I^2)$ from $\pi$ to the abelian group $I/I^2$.)
    The formula $\psi (a) = a-1 \, (\modu \hat A_2)$ defines a
      homomorphism $\psi$ from $\hat \pi $ to the abelian group $ \hat A_1/\hat A_2\simeq I/I^2$.
    By the definition of $\hat \pi_{(2)}$, we have $\Ker \psi= \hat \pi_{(2)}$. It remains to show that $\psi$ is onto. Since $\QQ\subset  \kk$, the  logarithmic and  exponential series
define mutually inverse   bijections between $\hat \pi$ and the
 primitive  part of $\hat A$:
\begin{equation}\label{glike_primitive}
\hat \pi
\xymatrix{
\ar@/^0.5pc/[rr]^-\log_-\simeq &&  \ar@/^0.5pc/[ll]^-\exp
}
\mathcal{P} (\hat A)=\left\{a \in \hat A: \hat \Delta(a) = 1 \hat\otimes a + a \hat \otimes 1 \right\}.
\end{equation}
The set $\mathcal{P} (\hat A)$ is a Lie $\kk$-subalgebra of $\hat
A_1$ (it is sometimes called the  \emph{Malcev Lie algebra}  of $\pi$).
Let $g\in \pi$ and set $g'=\iota (g)\in \hat \pi$. Then  $\log (g')
\in \mathcal{P} (\hat A)$. For any $k\in \kk$, we have  $k \log (g')
\in \mathcal{P} (\hat A)$ and $a_k=\exp (k\log (g'))\in \hat \pi$.
Observe that $a_k\equiv 1+k (g'-1)\, (\modu \hat A_2)$.
Therefore $\psi(a_k) = k (g-1)\, (\modu I^2)$. Hence $\psi$ is surjective.
\end{proof}

\subsection{The forms $\hat \eta$ and $\hat  \sigma$}\label{COCO3}
Given an F-pairing $\eta$ in $A$ and an integer $m\geq 1$, we have
  $\eta (A, I^m)= \eta (I, I^m)\subset I^{m-1}$ and
similarly $\eta (I^m, A) \subset I^{m-1}$. Therefore $\eta$ induces
a $\kk$-bilinear form
 $\eta^{(m)}:  A/I^m\,  \times \, A/I^m  \to A/I^{m-1}$.
The  forms $(\eta^{(m)})_{m\geq 1} $ are compatible in the obvious
way, and taking their projective limit we obtain a $\kk$-bilinear pairing
 $\hat \eta: \hat A \times \hat A \to \hat A$.  A similar construction  applies to the   derived form $\sigma:A\times A\to A$ of $\eta$.
  Lemma \ref{le4} and the equalities $\sigma (1, {A})=\sigma ({A}, 1)=0$ imply that  $\sigma$   induces a $\kk$-bilinear form
$   A/I^m\,  \times \, A/I^m  \to A/I^{m-1}$ for all $m\geq 1$. The
projective limit of these forms is a $\kk$-bilinear pairing
 $\hat \sigma: \hat A \times \hat A \to \hat A$. We call   $\hat \eta $ and $\hat  \sigma$ the {\it completions}
  of $\eta$ and $\sigma$, respectively.
 We shall discuss
 the properties of $\hat \eta $ and $\hat  \sigma$ in a more general context in the
 next   subsection.

 \subsection{Fox pairings in $\hat  A$}\label{COCO4}  In analogy with F-pairings in $A$ we   define an {\it F-pairing} in $\hat A$ to be  a
  $\kk$-bilinear pairing
 $ \rho: \hat A \times \hat A \to \hat A$ satisfying the product formulas \eqref{equ1}, \eqref{equ2}
 where $A$, $\aug $, $\eta$ are replaced by $\hat A$,   $\hat \aug $,   $\rho$
 respectively. It follows from the product formulas that $\rho (1, {\hat A})=\rho ({\hat A},
 1)=0$. It is clear that the completion of an F-pairing in $A$ is an F-pairing
in $\hat A$.

An  F-pairing $\rho$ in $\hat A$
  induces a bilinear form $\mediumdott{\rho}: \hat A \times \hat A
\to \kk$ by $a\mediumdott{\rho} b=\hat \aug (\rho (a,b))$ for $a,b\in
\hat A$.   As in   Section \ref{kd03},   the pairing $\mediumdott{\rho}
\circ (\iota \times \iota):\pi \times \pi \to \kk$ induces a
bilinear form $H_1(\pi;\kk) \times H_1(\pi;\kk)  \to \kk$. The
latter form is also denoted $\mediumdott{\rho}$.   For any F-pairing
$\eta$ in $A$ with completion $\hat \eta$, the form
 $\mediumdott{\eta}$ induced by $\eta$ coincides with $\mediumdott{\hat \eta}$.

We now define  (right) derived forms of the F-pairings in   $\hat
A$.   The construction proceeds in three steps.

{\it Step 1.}  The following lemma will allow us, in analogy with Section
\ref{kd01},  to define for any $u\in \hat A$ and $  v \in A$  an
expression $v^u\in \hat A$.

\begin{lemma}\label{levuvuvuv}
If $u_1,u_2 \in A$ verify $u_1 \equiv u_2\ (\modu I^m)$   with $m\geq 1$, then $v^{u_1} \equiv v^{u_2}\
(\modu I^m)$  for all $v\in A$.
\end{lemma}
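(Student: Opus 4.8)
The plan is to reduce the claim to a statement about a single group element times a single basis element of $\pi$, and then to exploit the elementary fact that left and right multiplication by a fixed element of $\pi$ preserve the filtration $\{I^m\}$ and shift it by nothing. Recall that the operation $v \mapsto v^u$ is $\kk$-bilinear in both $u$ and $v$; writing $w = u_1 - u_2 \in I^m$, it suffices by bilinearity in $u$ to show that $v^w \in I^m$ for every $v \in A$ whenever $w \in I^m$. So the whole content is: $(-)^{(-)} : A \times A \to A$ sends $A \times I^m$ into $I^m$.

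First I would verify the base case $m=1$: if $w \in I$, then $\aug(w) = 0$, i.e.\ writing $w = \sum_{x \in \pi} k_x x$ we have $\sum_x k_x = 0$. Hence $v^w = \sum_x k_x\, x^{-1} v x = \sum_x k_x (x^{-1} v x - v)$, since $\sum_x k_x v = \aug(w) v = 0$. Now for each fixed $x \in \pi$ and each $g \in \pi$ appearing in $v$, the element $x^{-1} g x - g$ lies in $I$: indeed $\aug(x^{-1} g x - g) = 1 - 1 = 0$. Extending by linearity in $v$, we get $v^w \in I$ for all $v \in A$. The key input here is simply that conjugation by a group element is augmentation-preserving.

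For the inductive step, suppose the claim holds for $m-1$ and let $w \in I^m$. Since $I^m = I \cdot I^{m-1}$, by bilinearity I may assume $w = (a-1) w'$ with $a \in \pi$ and $w' \in I^{m-1}$. The main computational point is a commutation identity: for $u = u' u''$ a product of two elements of $A$, one has $v^{u' u''}$ related to iterated conjugations, but more cleanly, I would use that $v^{(a-1)w'} = v^{aw'} - v^{w'}$ (bilinearity in $u$), and that $v^{aw'}$ can be rewritten. From the definition, if $w' = \sum_x k_x x$ then $aw' = \sum_x k_x (ax)$, so $v^{aw'} = \sum_x k_x (ax)^{-1} v (ax) = \sum_x k_x x^{-1}(a^{-1} v a) x = (a^{-1} v a)^{w'}$. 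Therefore
\[
v^{(a-1)w'} = (a^{-1} v a)^{w'} - v^{w'} = \bigl((a^{-1} v a) - v\bigr)^{w'}.
\]
Now $(a^{-1} v a) - v$ lies in $I$ (same augmentation argument as above, extended linearly in $v$), and $w' \in I^{m-1}$, so I would like to conclude $\bigl((a^{-1}va)-v\bigr)^{w'} \in I^m = I \cdot I^{m-1}$. This does not follow directly from the inductive hypothesis as stated (which only tracks the second slot), so the honest approach is to prove the sharper bilinear statement $v^u \in I^{p+q}$ whenever $v \in I^p$ and $u \in I^q$ ($p \geq 1$, $q \geq 0$, with the convention $I^0 = A$), by a double induction or by a direct expansion. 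Given that sharper statement, the present lemma is the case $p = 0$... but that case is false in general, so in fact the cleanest route is: prove $v^u \in I^{q}$ for $v \in A$, $u \in I^q$ directly by the induction above, using at the inductive step that $(a^{-1}va - v) \in I$ together with a \emph{separate} easy lemma that $I \cdot (\text{anything in } I^{m-1})$ computed through $(-)^{(-)}$ stays in $I^m$ — which is exactly the $p \geq 1$ refinement. I expect the main obstacle to be precisely this bookkeeping: one must set up the induction on the refined statement ``$v \in I^p, u \in I^q \Rightarrow v^u \in I^{\max(p,0)+q}$ when $p \geq 1$, and $v^u \in I^q$ when $p = 0$'' rather than on the bare lemma, since the inductive step converts the second-slot filtration degree into a first-slot one. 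Once the refined statement is in place, reassembling gives $v^{u_1} - v^{u_2} = v^{u_1 - u_2} \in I^m$, which is the claim.
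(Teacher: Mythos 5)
Your reduction and your key identity are both correct, and the strategy does work, but it is genuinely different from the paper's and, as written, it stops one step short. You peel a factor $(a-1)$ off the \emph{left} of $u\in I^m$ and use $v^{aw'}=(a^{-1}va)^{w'}$ to get $v^{(a-1)w'}=(a^{-1}va-v)^{w'}$; this transfers filtration degree from the second slot to the first, which is why the bare statement does not close under induction and you are forced to the refined claim ``$v\in I^p,\ u\in I^q\Rightarrow v^u\in I^{p+q}$''. That refinement is true and your sketch does complete: induct on $q$, with base case $q=0$ handled by the fact that $I^p$ is a two-sided ideal stable under conjugation, and inductive step handled by $a^{-1}va-v=a^{-1}\bigl(v(a-1)-(a-1)v\bigr)\in I^{p+1}$ for $v\in I^p$. (Your parenthetical ``but that case is false in general'' is mistaken: the $p=0$ instance of the refined claim is exactly the lemma and is true; nothing breaks there.) The paper avoids the strengthening altogether by peeling off the \emph{last} factor instead: writing $u=w(a_m-1)$ with $w=(a_1-1)\cdots(a_{m-1}-1)$, the first identity of \eqref{obiousids} gives $v^{w(a_m-1)}=a_m^{-1}v^{w}a_m-v^{w}=a_m^{-1}\bigl(v^{w}(a_m-1)-(a_m-1)v^{w}\bigr)$, and since $v^{w}\in I^{m-1}$ by the plain inductive hypothesis, this lies in $I^m$ directly. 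The moral is that applying the ``conjugate minus identity'' operation \emph{after} invoking the inductive hypothesis keeps the induction in one variable, whereas applying it before (as you do) requires tracking both slots. Either route is fine; yours just needs the auxiliary double-indexed statement written out, and the erroneous aside deleted.
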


  \begin{proof}  Since the expression $v^u$ is linear in $u\in A$,   it suffices to show
that $v^u\in I^m$ for any product $u=(a_1-1) \cdots (a_m-1)$ with
$a_1, \ldots, a_m\in \pi$. This is clear for $m=1$ because $v^{a
-1}=a^{-1}va-v$ for all $a\in \pi$. For $m\geq 2$, we proceed by
induction. Set $w=(a_1-1) \cdots (a_{m-1}-1)$. By the induction
assumption, $ v^{w}\in I^{m-1}$. Then
$$v^u=v^{w (a_m-1)}= a_m^{-1} v^{w} a_m - v^{w}=  a_m^{-1} (v^{w}(a_m-1) -(a_m-1) v^{w})\in
I^m.$$

\vspace{-0.5cm}
\end{proof}

Given $u \in \hat A=\underleftarrow{\lim}\, A/I^m$ and $v\in A$, we
define $v^u\in \hat A$ as follows. For each $m\geq 1$, pick   $u_m
\in A$ whose projection  to $A/I^m$ is equal to the projection  of
$u $ to $A/I^m$.   By the previous lemma,  $ v^{u_m} (\modu I^m)$
does not depend on the choice of $u_m$. Hence   the sequence
$v^{u_m} (\modu I^m)$ with $m=1,2, \ldots $ represents a
well-defined element $v^u$ of $\hat A$.   It is clear that $v^u$ is
linear in  both $u$ and $v$. If $u=\iota (u') $ for some $u' \in A$,
then  $v^u=\iota (v^{u'})$. The identities \eqref{obiousids} remain
true for all $u\in \hat A, v\in A, a\in \pi$.

{\it Step 2.}
In the sequel, we   treat $\hat A$ as an $A$-bimodule via $\iota$.
Thus, given $a\in A$ and $b\in \hat A$, we write $ab$ for $\iota (a)
b$ and   $ba$ for $b \iota (a)$.
Starting from an F-pairing $\rho$ in $\hat A$, we define a
pairing $\sigma=\sigma^\rho: A\times A \to \hat A$ as follows. For
any $a,b\in \pi$, set $$\sigma (a,b)=b a^{\rho (\iota (a),\iota (b)
)}\in \hat A.$$ Then extend to $A \times A$ by $\kk$-bilinearity.
Clearly, $\sigma (1, A)=\sigma (A, 1)=0$ and   $  \hat \aug (
\sigma (a,b))= \iota (a) \mediumdott{\rho} \iota(b) $ for all  $a,b
\in A
  $. Further properties of $\sigma$ are summarized in
the following lemma whose proof is  a direct adaptation of the
proofs of the parallel claims for F-pairings in $A$, see
   Section \ref{section0}.

\begin{lemma}\label{levuvuvuv+}
The form $\sigma=\sigma^\rho$ satisfies the following:
\begin{itemize}
\item[(i)] $\sigma (a, b c)=      \sigma (a,b) c +b \sigma (a,c)  $ and $\sigma
(ab, c)=\sigma (ba, c)$ for   any $a,b,c\in A$;
\item[(ii)] $\sigma (A, I^m )\subset \hat
A_{m-1}$ and  $\sigma (I^m, A)\subset \hat A_{m-1}$ for any $m\geq 1$.
\end{itemize}
\end{lemma}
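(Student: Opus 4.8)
The plan is to mirror, for the completed setting, the arguments already carried out for $\sigma$ in Section \ref{section0}, using the key technical fact established in Step 1 that the assignment $v\mapsto v^u$ makes sense for $u\in\hat A$, $v\in A$, and that the identities \eqref{obiousids} persist in this generality. I expect no genuinely new idea to be needed: the point is simply that passing to the projective limit is harmless, provided one checks that every manipulation lands in the right term of the $\hat A_\bullet$ filtration.

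First I would prove part (i). For the derivation property $\sigma(a,bc)=\sigma(a,b)c+b\,\sigma(a,c)$, by $\kk$-bilinearity I may take $a,b,c\in\pi$. Then $\sigma(a,bc)=bc\,a^{\rho(\iota(a),\iota(bc))}$, and since $\rho$ is a right Fox derivative in the second variable, $\rho(\iota(a),\iota(bc))=\rho(\iota(a),\iota(b))\iota(c)+\rho(\iota(a),\iota(c))$ (using $\hat\aug(\iota(b))=1$). Splitting $a^{(-)}$ according to this sum, the term $bc\,a^{\rho(\iota(a),\iota(b))\iota(c)}$ becomes $b\,a^{\rho(\iota(a),\iota(b))}c$ by the first identity in \eqref{obiousids} (valid in $\hat A$), which is $\sigma(a,b)c$; the remaining term $bc\,a^{\rho(\iota(a),\iota(c))}$ is $b\,\sigma(a,c)$ after observing that the scalars $k_x$ appearing in $\rho(\iota(a),\iota(c))$ conjugate $a$, not $c$, so $c$ commutes through — this is exactly the computation in Lemma \ref{le2-}/Lemma \ref{le1}. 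For $\sigma(ab,c)=\sigma(ba,c)$ I repeat verbatim the proof of Lemma \ref{le2}, using that $\rho$ is a left Fox derivative in the first variable, so $\rho(\iota(ab),\iota(c))=\rho(\iota(a),\iota(c))+\iota(a)\rho(\iota(b),\iota(c))$, and the second identity in \eqref{obiousids}; the right-hand side $c\,(ab)^{\rho(\iota(a),\iota(c))}+c\,(ba)^{\rho(\iota(b),\iota(c))}$ is symmetric in $a,b$.

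Next, part (ii). By the symmetry $\sigma(ab,c)=\sigma(ba,c)$ — or simply by running the same argument on the left — the two inclusions are parallel, so I focus on $\sigma(I^m,A)\subset\hat A_{m-1}$; by the derivation property in the second variable it suffices, after expanding any element of $A$ as a finite sum of products of generators $b-1$, to handle $\sigma(I^m,b)$ for $b\in\pi$, and then the second inclusion follows from part (i) since $\sigma(a,d_1\cdots d_n)$ with $d_j\in I$ is a sum of terms $d_1\cdots d_{j-1}\,\sigma(a,d_j)\,d_{j+1}\cdots d_n$ and $\hat A_{m-1}\hat A_1^{\,n-1}\subset\hat A_{m+n-2}$. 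To estimate $\sigma(\Pi,b)$ for a generator $\Pi=(a_1-1)\cdots(a_m-1)$ of $I^m$ and $b\in\pi$, I would redo the computation of Lemma \ref{le3}: the only difference is that the coefficients of $\rho(\iota(a_i),\iota(b))$ now lie in $\hat A$ rather than $A$. Concretely, pick for each large $N$ approximants $u^{(i)}_N\in A$ with $u^{(i)}_N\equiv\rho(\iota(a_i),\iota(b))\ (\modu I^N)$; by Lemma \ref{levuvuvuv} the expression $\Pi^{(\,\cdot\,)}$ only depends on the class mod $I^N$, and the algebraic identity proved in Lemma \ref{le3} — rewriting the triple sum as $\sum_j n^j_x\,a_j(a_{j+1}-1)\cdots(a_m-1)(a_1-1)\cdots(a_{j-1}-1)$, which visibly lies in $I^{m-1}$ — goes through with $n^j_x\in\kk$ replaced by arbitrary elements of $A$ multiplying on the appropriate side, the combinatorics of the signs being untouched. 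Taking the limit over $N$ gives $\sigma(\Pi,b)\in\hat A_{m-1}$.

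The main obstacle, such as it is, is purely bookkeeping: one must be careful that the formal power $v^u$ for $u\in\hat A$ is only defined for $v\in A$ (not $v\in\hat A$), so in the expression $\sigma(a,b)=b\,a^{\rho(\iota(a),\iota(b))}$ the ``base'' $a$ is a genuine element of $\pi\subset A$ and the completed object sits only in the exponent; and that the rearrangement in Lemma \ref{le3} is an identity of elements of $A$ before any completion, so Lemma \ref{levuvuvuv} legitimately transfers it to $\hat A$. Once this is kept straight, all three assertions are immediate transcriptions of Lemmas \ref{le2-}, \ref{le2}, \ref{le3}, and \ref{le4}, and I would simply say so rather than write the identical computations again.
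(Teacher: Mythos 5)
Your proof is correct and takes essentially the same route as the paper: the paper gives no written argument for this lemma, asserting only that it is "a direct adaptation of the proofs of the parallel claims" in Section \ref{section0}, and your proposal carries out exactly that adaptation of Lemmas \ref{le2-}, \ref{le2}, \ref{le3} and \ref{le4}, with the correct technical care (approximants via Lemma \ref{levuvuvuv}, the extended identities \eqref{obiousids}, and the observation that the base of $v^u$ stays in $A$).
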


{\it Step 3.}   Lemma \ref{levuvuvuv+} (ii)  implies that for every
integer $m\geq 1$,
  the form $\sigma$   induces a bilinear form
$\sigma^{(m)}:  A/I^m\,  \times \, A/I^m  \to \hat A/\hat A_{m-1}
\simeq A/I^{m-1}$. These forms are compatible in the obvious way and
their projective limit is a bilinear pairing
 $\hat \sigma=\hat \sigma^{\rho}: \hat A \times \hat A \to \hat A$.
 This is the \emph{derived form} of $\rho$.

The following   properties of $\hat \sigma$ are   consequences of
the definitions and Lemma \ref{levuvuvuv+}:

- For all $m\geq 1$, we have   $\hat \sigma (\hat A_m, \hat
A)\subset \hat A_{m-1}\supset \hat \sigma (\hat A, \hat A_m)$.
Applying the same argument as in the proof of Lemma \ref{le4} to
$\sigma^{(m+n-1)}$, we easily obtain that $\hat \sigma (\hat A_m,
\hat A_n)\subset \hat A_{m+n-2}$ for all $m,n \geq 1$.

-- For any  $a\in {\hat A}$, the map   ${\hat A}\to {\hat A} , b
\mapsto \hat \sigma(a,b)$ is a derivation of  ${\hat A}$. This
derivation is denoted $\hat \sigma (a, -) $.

-- For any $a  \in \hat A$ and $c\in \hat \pi$, we have  $\hat
\sigma (cac^{-1},-)=\hat \sigma (a, -) $.

The definition of the derived form   is compatible with completion:
if $\rho=\hat \eta$ is the completion of an F-pairing $\eta$ in $A$,
then $\sigma^\rho =\iota \sigma^\eta :A \times A \to \hat A$ and
$\hat \sigma^\rho =\widehat {\sigma^\eta}$.

 \subsection{Filtered Fox pairings in $\hat  A$}\label{COCO5} We call an F-pairing $\rho$ in $\hat A$ {\it filtered} if
 $\rho (\hat A_m, \hat A )\subset
 \hat A_{m-1}\supset \rho (\hat A, \hat A_m)$ for all $m \geq 1.$
 For example, the completion  of any F-pairing  in $A$ is  filtered.

A  filtered F-pairing $\rho$ in $\hat A$ has several nice features
which we now discuss. First of all, $\rho$ induces for every $m\geq
1$ a pairing $\rho^{(m)}:A/I^m \times A/I^m \to A/I^{m-1}$ where we
use that $\hat A/   \hat A_m \simeq A/I^m$. Clearly, $\rho
=\underleftarrow{\lim}\,  \rho^{(m)}$. Using $\rho^{(m+n-1)}$, one
 easily checks that
 $\rho (\hat A_m, \hat A_n)\subset
 \hat A_{m+n-2}$ for all $m,n\geq 1$.

Secondly,  there is an alternative description of the form
$\mediumdott{\rho}$ in $H=H_1(\pi;\kk)$. This form can be obtained by
restricting $\rho^{(2)}$ to $I/I^2 \times I/I^2 $   and transporting
along the canonical isomorphism  $ I/I^2\simeq H  $. Moreover, for
any $a,b \in \hat A$,
\begin{equation}\label{eqq} \hat \aug \, \hat  \sigma^\rho (a,b)=  a \mediumdott{\rho}  b= \rho^{(2)} (p(a),   p(b))\end{equation}
where $  p $ is the  projection $\hat A\to \hat A/\hat A_2 \simeq
A/I^2$.

 Thirdly, there is an
expansion of $\hat \sigma^\rho$, viewed as a $\kk$-linear
map $\hat A \hat \otimes \hat A \to \hat A$, in terms of the
 Hopf algebra operations in $\hat A$: the comultiplication $\hat
 \Delta:\hat A \to \hat A \hat \otimes \hat A$, the antipode $S:\hat A\to \hat A$, and the multiplication $\mu:\hat
 A \hat \otimes \hat A\to \hat A$.

 \begin{lemma}\label{lemma_cs} If $\rho$ is a filtered F-pairing in
 $\hat A$, then
\begin{equation}\label{condensed_sigma}
\hat \sigma^\rho = \mu   (\mu \hat \otimes \mu) P_{4213}
\left(\id_{\hat A } \hat \otimes  (  S \hat \otimes \id_{\hat A})\hat \Delta \rho \,  \hat \otimes \id_{\hat A } \right)
   (\hat \Delta \hat \otimes \hat \Delta): \hat A \hat \otimes \hat A \to \hat A,
\end{equation}
where $P_{4213}$ is the automorphism  of $\hat A^{\hat \otimes 4}$
carrying $a_1 \hat \otimes a_2 \hat \otimes a_3 \hat \otimes a_4$ to
$ a_4 \hat \otimes a_2 \hat \otimes a_1 \hat \otimes a_3$  for all
$a_1, a_2, a_3, a_4 \in \hat A$.
\end{lemma}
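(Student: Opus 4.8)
The plan is to verify the identity \eqref{condensed_sigma} by evaluating both sides on a generic pair of group-like elements $g, h \in \hat\pi$ and then invoking density. Since $\hat\sigma^\rho$ and the right-hand side of \eqref{condensed_sigma} are both $\kk$-bilinear and continuous (each is a limit of forms on $A/I^m$), and since $\iota(\pi)$ spans a dense $\kk$-subspace of $\hat A$, it suffices to check the equality on $a = \iota(g)$, $b = \iota(h)$ with $g, h \in \pi$; in fact by compatibility with completion it is enough to treat the case $\rho = \hat\eta$ and then evaluate on $g, h \in \pi$. For such elements the left-hand side is, by the Step 2 definition in Section \ref{COCO4}, simply $\hat\sigma^\rho(\iota(g), \iota(h)) = h\, g^{\rho(\iota(g), \iota(h))}$.

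Next I would unwind the right-hand side on $a = \iota(g)$, $b = \iota(h)$. Since $g$ and $h$ are group-like, $\hat\Delta \iota(g) = \iota(g) \hat\otimes \iota(g)$ and $\hat\Delta \iota(h) = \iota(h) \hat\otimes \iota(h)$, so $(\hat\Delta \hat\otimes \hat\Delta)(a \hat\otimes b) = \iota(g) \hat\otimes \iota(g) \hat\otimes \iota(h) \hat\otimes \iota(h)$. The middle operator applies $\rho$ to the second and third slots, giving $\iota(g) \hat\otimes \rho(\iota(g), \iota(h)) \hat\otimes \iota(h)$, then $(S \hat\otimes \id)\hat\Delta$ is applied to the middle factor $w := \rho(\iota(g), \iota(h))$, producing $\sum_{x} k_x\, \iota(x^{-1}) \hat\otimes \iota(x)$ when $w = \sum_x k_x \iota(x)$ (here one uses that the antipode $S$ is induced by the bar involution and, crucially, that $\hat\Delta$ restricted to $\iota(A)$ is the group comultiplication, so $\hat\Delta \iota(x) = \iota(x)\hat\otimes\iota(x)$). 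After permuting the four tensor slots by $P_{4213}$ and multiplying them together in the order dictated by $\mu(\mu \hat\otimes \mu)$, I expect to land exactly on $\sum_x k_x\, \iota(h)\, \iota(x^{-1})\, \iota(g)\, \iota(x) = h\, g^{w}$, matching the left-hand side. The bookkeeping of which of the four tensor legs carry $\iota(g), \iota(x^{-1}), \iota(g), \iota(x), \iota(h)$ after the $P_{4213}$ shuffle is the one genuinely fiddly point, but it is a finite computation that is forced once the group-like and antipode relations are in place.

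Finally, I would note that both sides of \eqref{condensed_sigma} are well-defined continuous $\kk$-linear maps $\hat A \hat\otimes \hat A \to \hat A$: the left-hand side because $\hat\sigma^\rho$ was constructed in Step 3 as a projective limit, and the right-hand side because each constituent ($\hat\Delta$, $S$, $\mu$, $\rho$, the permutation $P_{4213}$) is filtered-continuous and the filtration degrees add up correctly so that the composite lowers (or preserves) the filtration by a fixed amount — here one uses that $\rho$ is \emph{filtered}, i.e. $\rho(\hat A_m, \hat A) \subset \hat A_{m-1}$ and $\rho(\hat A, \hat A_m) \subset \hat A_{m-1}$, which is exactly the hypothesis of the lemma and is what guarantees the middle operator does not destroy convergence. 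The main obstacle is therefore not conceptual but the careful tracking of the permutation and the antipode signs/inversions in the four-fold tensor product; everything else reduces to the density argument together with the already-established Step 2 formula $\sigma(a,b) = b\,a^{\rho(\iota(a),\iota(b))}$ on $\pi \times \pi$.
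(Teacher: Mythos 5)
Your proposal is correct and follows essentially the same route as the paper: both sides carry $\hat A_m \hat\otimes \hat A_n$ into $\hat A_{m+n-2}$ (since $\rho$ is filtered and every other constituent preserves the filtration), so it suffices to check the identity on $\{\iota(g)\hat\otimes\iota(h): g,h\in\pi\}$, where the group-like/antipode computation you sketch yields exactly $h\,g^{\rho(g,h)}=\sigma^\rho(g,h)$. Two minor cautions that do not affect the argument: the aside reducing to $\rho=\hat\eta$ is neither needed nor valid for an arbitrary filtered F-pairing in $\hat A$, and the expansion $w=\sum_x k_x\,\iota(x)$ must be read modulo each $\hat A_m$ (as in the definition of $v^u$ in Section \ref{COCO4}), since $\rho(\iota(g),\iota(h))$ need not lie in $\iota(A)$.
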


\begin{proof}
Both sides of \eqref{condensed_sigma} are $\kk$-linear maps  $\hat A
\hat \otimes \hat A \to \hat A$. Both sides carry $\hat A_m \otimes
\hat A_n$ to $\hat A_{m+n-2}$ for all $m, n\geq 1$. Indeed, each map
on the right-hand side of \eqref{condensed_sigma}   preserves the
natural filtration, except for $\rho$ which may decrease the
filtration degree by 2. Therefore it is enough to check
  (\ref{condensed_sigma}) on
 the set $ \{a \hat\otimes b : a,b \in \pi \} \subset \hat A\hat \otimes \hat A$ where it  follows directly from the
definitions.
\end{proof}

Every F-pairing $\rho$ in $\hat A$ induces a filtered F-pairing
$\hat \rho$ in $\hat A$ as follows. It is clear that    $\rho_0=\rho
(\iota \times \iota):A\times A\to \hat A$ satisfies $\rho_0 (A, I^m)
\subset \hat A_{m-1}\supset \rho_0 (I^m, A)  $ for all $m\geq 1$.
Therefore $\rho_0$ induces a bilinear form
 $\rho_0^{(m)}:  A/I^m\,  \times \, A/I^m  \to A/I^{m-1}$ for all $m \geq 1$.
 Then $\hat \rho=  \underleftarrow{\lim}\,  \rho_0^{(m)}$   is a filtered F-pairing
 in $\hat A$. It follows from the definitions that $\mediumdott{\rho}=\ \mediumdott{\hat \rho}$,
 $\sigma^\rho=\sigma^{\hat \rho}$, and $\hat\sigma^\rho=\hat \sigma^{\hat \rho}$.

  \section{Exponentiation and the twists}\label{twists}

Starting from this section we suppose that  $\QQ\subset \kk$. Given
a Fox pairing  in $\hat A$, we   use its derived form   to construct
automorphisms of $\hat A$
  called the ``twists.''

 \subsection{Exponentiation}\label{kd4}      We say that a  $\kk$-linear  homomorphism $f:{\hat A}\to {\hat A}$   is  {\it weakly nilpotent}
if   for any  ${m}\geq 0$ we have $f(\hat A_m)\subset \hat A_m$ and
there is  $N=N(m)>0$ such that  $f^N(\hat A ) \subset \hat A_{m }$.
A weakly nilpotent  homomorphism $f:{\hat A}\to {\hat A}$ has an
exponent ${\ee^{f}}: {\hat A}\to {\hat A}$ carrying any $a\in \hat
A$ to
\begin{equation}\label{expo}{\ee^{f}} (a) =\sum_{k\geq 0} \frac{f^k}{k!} (a) .\end{equation}
The sum on the right-hand side is  well-defined  because for any given ${m}\geq 1$,
it contains only a finite number of  terms  that are non-zero in $\hat A/ \hat A_m \simeq A/I^m$.
Therefore this sum   defines a $\kk$-linear   homomorphism $ ({\ee^{f}})^{({m})}: A/I^{m} \to A/I^{m}$ for each $m$.  These homomorphisms are
compatible with respect to the projections $A/I^{m+1}\to A/I^{{m}}$. By definition, we have
${\ee^{f}}=\underleftarrow{\lim}\, ({\ee^{f}})^{({m})}: \hat A\to \hat A.$
It follows directly from the definitions   that ${\ee^{f}} (\hat A_m)\subset \hat A_m$ for all $m$.

 If $f:{\hat A}\to {\hat A}$ is a weakly nilpotent  homomorphism, then so is $-f  :{\hat A}\to {\hat A}$.  The standard properties of the exponentiation give    $\ee^{f} \ee^{-f}=\ee^{-f} \ee^{f} =\id_{\hat A}$. Thus,  the homomorphism $\ee^{f}$ is  invertible. The inclusions ${\ee^{f}} (\hat A_m)\subset \hat A_m$ and ${\ee^{-f}} (\hat A_m)\subset \hat A_m$ imply that  ${\ee^{f}} (\hat A_m)=\hat A_m$ for all $m$.

We call an  automorphism of $\hat A$  {\it filtered} if it carries $\hat A_m$ onto itself for all $m$. We just showed that $\ee^{f}$ is  filtered.

 If   $f$ is a weakly nilpotent  derivation of $\hat A$, then ${\ee^{f}}$ is an algebra automorphism of $\hat A$. Indeed, for any $a,b\in \hat A$ and $k\geq 0$,
$$ {f^k}  (ab)= \sum_{k_1, k_2\geq 0, k_1+k_2=k} \frac {k!}{k_1! k_2!} \, {f^{k_1}} (a) {f^{k_2}} (b).$$
Substituting    in the definition of ${\ee^{f}} (ab)$ we   obtain
that ${\ee^{f}} (ab)= {\ee^{f}} (a) \, {\ee^{f}} (b)$. Also $f(1)=0$
and hence $\ee^{f}(1)=1$. Note    that   $\hat \aug \circ {\ee^{f}}=
\hat \aug $. In fact, any filtered algebra automorphism of $\hat A$
commutes with $\hat \aug$. Indeed, the induced $\kk$-linear
automorphism of
 $  \hat A/ \hat A_{1}\simeq A/I \simeq  \kk$ carries $1 $ to itself and has to be the identity.

 \subsection{The twists}\label{kd9}    Given an  F-pairing $\rho$ in $\hat A$,  it is natural to ask
for which $a\in \hat A$ the derivation $\hat \sigma (a, -) =\hat
\sigma^\rho (a, -)$ of $\hat A$ is weakly nilpotent.   For example,
if $a\in \hat A_3$, then $\hat \sigma (a, -) $   is weakly
nilpotent. This follows from the inclusion   $\hat \sigma (a, -)
(\hat A_m)\subset \hat A_{m+1}$ for all $m\geq 0$.   We exhibit now
a bigger class of   weakly nilpotent derivations.

 \begin{lemma}\label{le5} Let $a=k(c-1)^2+e$  where $k\in \kk$,   $c\in \hat \pi \subset \hat A$   with $c\mediumdott{\rho} c=0$,
  and $e\in \hat A_3$. Then the derivation $\hat \sigma (a, -) $ is weakly nilpotent.
\end{lemma}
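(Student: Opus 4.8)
The plan is to reduce the claim to the two easy cases already recorded in the text --- namely that $\hat\sigma(e,-)$ is weakly nilpotent for $e\in\hat A_3$, and that $\hat\sigma(c-1,-)$ decreases filtration degree by at most $1$ --- and to control carefully the behaviour of the ``worst'' term $\hat\sigma((c-1)^2,-)$. Since $a=k(c-1)^2+e$ and $\hat\sigma(\cdot,-)$ is linear in the first variable, we have $\hat\sigma(a,-)=k\,\hat\sigma((c-1)^2,-)+\hat\sigma(e,-)$. The derivation $\hat\sigma(e,-)$ is already known to be weakly nilpotent (it sends $\hat A_m$ into $\hat A_{m+1}$, using $e\in\hat A_3$ together with $\hat\sigma(\hat A_m,\hat A_n)\subset\hat A_{m+n-2}$). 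So the real content is to show that $D:=\hat\sigma((c-1)^2,-)$ is weakly nilpotent, and then to argue that adding a derivation that strictly raises filtration degree does not destroy weak nilpotency.

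First I would analyze $D$. Write $b=c-1\in\hat A_1$. Since $\hat\sigma(-,-)$ restricted to the first variable is a Fox derivative, $\hat\sigma(b^2,y)=\hat\sigma(b,y)\hat\aug(b)+b\,\hat\sigma(b,y)=b\,\hat\sigma(b,y)$ because $\hat\aug(b)=\hat\aug(c-1)=0$. Hence $D(y)=b\cdot\hat\sigma(c-1,y)=(c-1)\,\hat\sigma(c-1,y)$ for all $y\in\hat A$. The factor $c-1$ lies in $\hat A_1$, which is what will ultimately buy the extra filtration degree --- \emph{except} on $\hat A_1$ itself, where $\hat\sigma(c-1,\hat A_1)\subset\hat A_0$ and multiplying by $c-1$ only brings us back to $\hat A_1$. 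So $D$ maps $\hat A_m$ into $\hat A_{m+1}$ for $m\geq 2$ and maps $\hat A_1$ into $\hat A_1$ and $\hat A_0$ into $\hat A_0$. To get weak nilpotency I must show that iterating $D$ eventually pushes everything deep into the filtration, so the crux is what $D$ does on the first graded piece $\hat A_1/\hat A_2\simeq I/I^2\simeq H_1(\pi;\kk)$. On the associated graded, $D$ acts on $H$ by $h\mapsto [c-1]\cdot(\text{something})$; more precisely the induced map $\mathrm{gr}_1 D:H\to H$ is, by \eqref{eqq} and the description of $\mediumdott{\rho}$ via $\rho^{(2)}$, the rank-one operator $h\mapsto (c\mediumdott{\rho}h)\,[c]$ — wait, I should be careful: on $\mathrm{gr}_1$, $\hat\sigma(c-1,-)$ lands in $\mathrm{gr}_0=\kk$ via $\hat\aug\hat\sigma(c-1,y)=[c]\mediumdott{\rho}[y]$, and then multiplication by $c-1$ sends that scalar $\lambda$ to $\lambda[c]\in H$. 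So $\mathrm{gr}_1 D(h)=([c]\mediumdott{\rho}h)\,[c]$. The hypothesis $c\mediumdott{\rho}c=0$ means exactly $[c]\mediumdott{\rho}[c]=0$, so $(\mathrm{gr}_1 D)^2=0$ on $H$: applying $\mathrm{gr}_1 D$ twice gives $([c]\mediumdott{\rho}[c])([c]\mediumdott{\rho}h)[c]=0$.

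With that in hand the weak nilpotency argument runs as follows. Fix $m\geq 1$; I must produce $N$ with $D^N(\hat A)\subset\hat A_m$. Since $D(\hat A_j)\subset\hat A_{j+1}$ for $j\geq 2$, once an element sits in $\hat A_2$ it takes at most $m-2$ further applications of $D$ to reach $\hat A_m$. So it suffices to show that some fixed power $D^{N_0}$ sends all of $\hat A=\hat A_0$ into $\hat A_2$. Now $D(\hat A_0)\subset\hat A_0$, $D(\hat A_1)\subset\hat A_1$; passing to associated graded, the induced map on $\mathrm{gr}_0=\kk$ is zero (it factors through $\hat\aug D=\hat\aug((c-1)\hat\sigma(c-1,-))=0$ since $\hat\aug$ kills the factor $c-1\in\hat A_1$), so $D(\hat A_0)\subset\hat A_1$ already. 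And on $\mathrm{gr}_1=H$ we showed $(\mathrm{gr}_1 D)^2=0$, hence $D^2(\hat A_1)\subset\hat A_2$. Combining, $D^3(\hat A_0)\subset D^2(\hat A_1)\subset\hat A_2$. Therefore $D^{3+(m-2)}(\hat A)\subset\hat A_m$ for $m\geq 2$ (and the case $m\le1$ is trivial), so $D$, hence $kD$, is weakly nilpotent; also $D(\hat A_j)\subset\hat A_j$ for all $j$, which is the other requirement in the definition.

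Finally I combine the pieces. Both $kD$ and $\hat\sigma(e,-)$ preserve every $\hat A_j$, hence so does their sum $\hat\sigma(a,-)$. For the nilpotency, note $\hat\sigma(e,-)(\hat A_j)\subset\hat A_{j+1}$, and more generally any product of the two operators in which at least one factor is $\hat\sigma(e,-)$ raises the filtration degree. Expanding $\hat\sigma(a,-)^N=(kD+\hat\sigma(e,-))^N$ as a sum of $2^N$ words in $kD$ and $\hat\sigma(e,-)$: a word containing $r$ occurrences of $\hat\sigma(e,-)$ raises degree by at least $r$ (each such factor contributes $+1$ and the $D$-factors never \emph{lower} degree once we are above $\hat A_1$, while they can only lower by the bookkeeping already handled). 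The words with $r$ large enough already land in $\hat A_m$; the remaining words have boundedly many $\hat\sigma(e,-)$-factors, hence contain a long consecutive block of $D$'s, so by the estimate above they too land in $\hat A_m$ once $N$ is large. Concretely one can take $N$ so that $N-3-(m-2)$ exceeds the number of $\hat\sigma(e,-)$-factors one can afford; making this counting precise (an interpolation between ``many $e$'s'' and ``a long run of $D$'s'') is the one genuinely fiddly point, but it is exactly the standard argument for why a sum of a weakly nilpotent operator and a degree-raising operator is weakly nilpotent. This proves $\hat\sigma(a,-)$ is weakly nilpotent.

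The main obstacle, as indicated, is not any single inequality but the organization of the filtration bookkeeping for $D=\hat\sigma((c-1)^2,-)$: unlike $\hat\sigma(e,-)$, this derivation does \emph{not} uniformly raise filtration degree (it is merely degree-preserving on $\hat A_0$ and $\hat A_1$), and the whole point of the hypothesis $c\mediumdott{\rho}c=0$ is to make its induced action on the degree-$1$ graded piece square to zero, which is precisely what is needed to get it past the bottom of the filtration. Once that is isolated and combined with the known fact about $\hat A_3$, the rest is routine.
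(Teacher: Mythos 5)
Your decomposition $\hat\sigma(a,-)=k\,\hat\sigma((c-1)^2,-)+\hat\sigma(e,-)$ and your treatment of the $e$-part match the paper, but the analysis of $D=\hat\sigma((c-1)^2,-)$ rests on a false identity. The derived form is \emph{not} a Fox derivative in its first variable: what holds instead is the trace-like property $\hat\sigma(ab,-)=\hat\sigma(ba,-)$ (Lemma \ref{le2}), and the correct expansion is $\hat\sigma((c-1)^2,y)=\hat\sigma(c^2,y)-2\hat\sigma(c,y)$, which is a ``cyclic'' sum (see the computation in the proof of Lemma \ref{le3}), not $(c-1)\,\hat\sigma(c-1,y)$. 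Consequently your key structural claim --- that $D(\hat A_m)\subset\hat A_{m+1}$ for $m\geq 2$ --- is false. In general $D$ only preserves each $\hat A_m$: by Lemma \ref{le4}, on a product $(b_1-1)\cdots(b_m-1)$ the operator $D$ acts modulo $\hat A_{m+1}$ by replacing one factor $(b_i-1)$ with $2(c\mediumdott{\rho}b_i)(c-1)$, which is generically a nonzero element of $\hat A_m\setminus\hat A_{m+1}$. So your count ``$D^3(\hat A)\subset\hat A_2$, then one more application per degree'' breaks down immediately at $\hat A_2$: a single further application of $D$ does not reach $\hat A_3$.

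The fix is to use the hypothesis $c\mediumdott{\rho}c=0$ at \emph{every} filtration level, not only on $\mathrm{gr}_1$. Iterating the description above, $D^{m+1}$ applied to $(b_1-1)\cdots(b_m-1)$ must, by pigeonhole, hit some factor twice, and the second hit produces the scalar $c\mediumdott{\rho}c=0$; hence $D^{m+1}(\hat A_m)\subset\hat A_{m+1}$ (the exponent needed grows with $m$, so the total power of $D$ required to reach $\hat A_m$ from $\hat A_0$ is quadratic in $m$, not linear as in your count). This is exactly the paper's argument, and it also streamlines your final ``interpolation'': one only needs that $(kD+d')^{m+1}(\hat A_m)\subset\hat A_{m+1}$, where every monomial containing at least one factor $d'=\hat\sigma(e,-)$ gains a degree outright and the remaining pure term $d^{m+1}$ is handled by the pigeonhole observation. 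Your identification of $(\mathrm{gr}_1 D)^2=0$ as the role of the hypothesis is correct in spirit but captures only the $m=1$ case of what is needed.
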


\begin{proof} Replacing  if necessary $\rho$ by $\hat \rho$ we can assume that $\rho$ is filtered.  Since $a\in \hat A_2$,
we have   $\hat \sigma (a, -)  (\hat A_m)\subset \hat A_m$  for all
$m\geq 0$. To prove that $\hat \sigma (a, -) $ is weakly nilpotent
it is enough to show that     $(\hat \sigma (a, -) )^{m+1}(\hat
A_m)\subset \hat A_{m+1}$ for all $m $.  For $m=0$, this inclusion
  follows from Formula \eqref{eqq} because  $a\in \hat A_2 = \Ker   p$.  Assume   now   that $m\geq 1$.    We have   $\hat \sigma (a, -) =kd+d'$ where $d=\hat \sigma ((c-1)^2,-)$ and $d'=\hat \sigma (e,-)$.
Therefore $(\hat \sigma (a, -) )^{m+1}$ is  a   homogeneous polynomial in $d$ and $d'$ of degree $m+1$. Both $d$ and $d'$ carry $\hat A_m$
 to itself  and  $d' (\hat A_m)\subset \hat A_{m+1}$ since  $e\in \hat A_3$.
Hence, any monomial in $d$ and $d'$ containing at least one entry of
$d'$ carries  $\hat A_m$ to $\hat A_{m+1}$. It remains to show that
$d^{m+1}(\hat A_m)\subset \hat A_{m+1}$.

  We claim that for   any $b\in \hat \pi$,
\begin{equation}\label{homolo}
d(b )\equiv
2 (c \mediumdott{\rho} b)  (c-1) \ (\modu  \hat A_2).
\end{equation}
To see this,  pick any $s=\sum_{x\in \pi} k_x x\in A $ with $k_x\in
\kk$ so that $\rho (c, b)\ (\modu \hat A_2)= s\ (\modu I^2)$.    Since
$c$ and $b$ are group-like, \eqref{condensed_sigma} gives
$$\hat \sigma (c, b)\equiv  \sum_{x\in \pi} k_x bx^{-1} cx \ (\modu \hat
A_2).$$ Note that $\rho (c^2, b) \equiv  (1+c) \sum_{x\in \pi} k_x x \ (\modu \hat A_2)$ and a similar computation gives
$$\hat \sigma (c^2, b)\equiv   2 \sum_{x\in \pi} k_x bx^{-1} c^2 x \ (\modu \hat
A_2).$$ Thus, we have
\begin{eqnarray*}
d(b) = \hat \sigma (c^2-2c+1, b)  =   \hat \sigma (c^2 , b)- 2 \hat \sigma ( c , b)
& \equiv &  2b \sum_{x\in \pi} k_x x^{-1} c (c-1) x   \ (\modu \hat A_2)\\
&\equiv&   2 \sum_{x\in \pi} k_x   (c-1)   \ (\modu \hat A_2),
\end{eqnarray*}
and the congruence (\ref{homolo}) follows.

The $\kk$-module $\hat A_m$ is linearly generated  by $\hat A_{m+1}$
and products of the form  $\Pi=( b_1 -1) \cdots ( b_m -1)$ with $b_1,
\ldots, b_m \in \iota(\pi) \subset \hat \pi$. As we know, $d (\hat
A_{m+1})\subset \hat A_{m+1}$, and we need only to prove that
$d^{m+1}(\Pi)\in \hat A_{m+1}$ for all $\Pi$ as above. Applying the
derivation $d$ to $\Pi$ we obtain (modulo $ \hat A_{m+1}$)
 a sum   of $m$ similar products in which  one of the factors  $b_i -1$ is transformed into
  $d(b_i-1)=d(b_i) \equiv 2 (c\mediumdott{\rho} b_i) (c-1)$  while the other factors are preserved.
Applying  $d$ recursively $m+1$ times   to  $\Pi$, we necessarily
have to transform one of  the  factors  twice.
 This gives $0$ because   $c \mediumdott{\rho} c=0$. Hence $d^{m+1}(\hat A_m)\subset \hat
 A_{m+1}$.
\end{proof}

    Lemma \ref{le5} implies that every $a\in \hat A$ as in this lemma gives rise
  to  a filtered   algebra automorphism ${\ee^{\hat \sigma (a, -) }}$ of $\hat A$.   In particular, for    $k\in \kk$ and
    $\alpha\in \hat \pi$ such that $\alpha\mediumdott{\rho} \alpha=0$, we set
    $$t_{k,\alpha}=\ee^{\hat \sigma (k\log^2 (\alpha ) , -) } =\ee^{k \hat \sigma (\log^2 (\alpha ) , -) }:\hat A\to \hat A.$$
Here
$$\log (\alpha )=\log (1+(\alpha -1))= ( \alpha -1)  - \frac {(\alpha -1)^2}{2}+ \frac {(\alpha -1)^3}{3}- \cdots $$
    is a well-defined element of $\hat A$ and $\log^2 (\alpha ) =(\log (\alpha ))^2$. Note that $\log^2 (\alpha )  \in    {(\alpha -1)^2}+  \hat A_3$.
    By Lemma  \ref{le5},  the derivation  $ \hat \sigma (k\log^2 (\alpha ) , -) $ is  weakly nilpotent.
    Therefore  $t_{k,\alpha}$  is a well-defined filtered algebra automorphism  of $\hat A$. We call $t_{k,\alpha}$  the {\it twist} of $\hat A$ determined by $\rho$, $k$, and $\alpha$.

 \subsection{Properties of the twists}\label{prop_twists}  We state  several  properties of the twists.    Fix an F-pairing $\rho$ in
 $\hat A$ and $\alpha\in \hat \pi$ such that $\alpha\mediumdott{\rho} \alpha=0$.

- We have $t_{k, c\alpha c^{-1}}= t_{k,\alpha}$ for all  $k\in \kk$
and $  c\in \hat \pi$.  This follows from the  conjugation
invariance of $\hat \sigma$.

- We have     $t_{k+l,\alpha}=  t_{k,\alpha}   t_{l,\alpha}$ for all
$k,l\in \kk$. This  follows from the properties of the
exponentiation. In particular, $t_{k,\alpha} $ commutes with $
t_{l,\alpha}$ for all $k,l\in \kk$.

 - We have  $t_{mk,\alpha}=(t_{k,\alpha})^m$  for all $m\in \ZZ$ and $k \in \kk$. In particular,
$t_{0,\alpha}=\id_{\hat A}$ and $t_{-k,\alpha}=t_{k,\alpha}^{-1}$.
This  follows from the   previous property.

- We have $t_{k, \alpha^m}=t_{ m^2k, \alpha}=(t_{ k, \alpha})^{m^2}$ for all $k\in \kk$ and $m\in \ZZ$. This follows from the equality $\log (\alpha^m)
=m \log (\alpha )$. In particular,  $t_{k, \alpha^{-1}}=t_{k, \alpha}$ for all $k \in \kk$.

-  For all $m\geq 2$,  the automorphism $t_{k, \alpha}^{(m)}$ of
$\hat A/\hat A_{m} \simeq  A/I^m $ induced by $t_{k, \alpha}$
depends only on $k$ and the image of $\alpha$ in $\hat \pi/ \hat
\pi_{(m)}$. This follows from the following claim: if $ \beta \in
\hat \pi$ is such that $\alpha-\beta\in \hat A_{m}$, then $t_{k,
\alpha}^{(m)}=t_{k, \beta}^{(m)}$. Indeed,
$$\log (\alpha)- \log (\beta)\in \hat A_m \quad {\text {and
so}}\quad \log^2 (\alpha)-  \log^2 (\beta)\in \hat A_{m+1}.$$
Therefore $\hat\sigma ( \log^2 (\alpha), -) \equiv \hat\sigma ( \log^2
(\beta), -)\ (\modu \hat A_m)$. Hence the claim.

-   We have   $t_{k,\alpha} (\beta)=\beta$ for all $k\in \kk$ and
all $\beta \in \hat \pi$ such that  $\rho (\alpha, \beta)=0$.
Indeed, if $\rho (\alpha, \beta)=0$, then $\rho (\alpha^m, \beta)=0$
for all $m\geq 0$. Since $\alpha^m$ and $ \beta$ are group-like,
Formula \eqref{condensed_sigma} gives $\hat \sigma (\alpha^m,
\beta)=0$ for all $m$. Hence ${\hat \sigma (\log^2 (\alpha ) }, -)
(\beta)=0$.

Any filtered automorphism $\varphi$ of $\hat A$ induces a
$\kk$-linear automorphism $\varphi_\ast$ of $ H=H_1(\pi;\kk) \simeq
I/I^2\simeq \hat A_1/\hat A_2$. The computations in the proof of
Lemma \ref{le5} imply that, for all $h\in H$,
$$
(t_{k, \alpha})_\ast (h)=  h+ 2 k ([\alpha] \mediumdott{\rho} h) [\alpha]
$$
where $[\alpha] \in H $ is the image of  $\alpha$ under the
projection $\hat \pi\to \hat\pi/\hat\pi_{(2)} \simeq H$. Thus, the
twist $t_{k,\alpha}$ induces   a transvection of $H$.

All the constructions above are natural with respect to isomorphisms
of groups such that the induced isomorphisms of the completed group
algebras
 preserve the   F-pairings.  Applying this principle to automorphisms of $\hat A$ we obtain the following lemma. Here and in the sequel we
  say that a map $\varphi$ from a set $L$ to itself   preserves a pairing $\rho:L \times L \to L$
   if $\rho (\varphi(a), \varphi (b))=\varphi (\rho (a,b))$ for all $a,b\in L$.

  \begin{lemma}\label{prop1} Let  $\varphi$ be a $\rho$-preserving automorphism of $\hat A$ induced by an automorphism of   $\pi$.
  Then for all  $k\in \kk$ and $\alpha \in \hat \pi$ such that $\alpha \mediumdott{\rho} \alpha=0$,  the following diagram   commutes:
$$
\xymatrix  @!0 @R=1cm @C=2cm  {
\hat A   \ar[d]_-{t_{k,\alpha}} \ar[r]^-{  \varphi} & \hat A \ar[d]^-{t_{k,   \varphi(\alpha)} } \\
\hat A \ar[r]_-{  \varphi} & \hat A.
}
$$
\end{lemma}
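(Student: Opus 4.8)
The plan is to show that $\varphi$ conjugates the weakly nilpotent derivation $\hat\sigma^\rho(k\log^2(\alpha),-)$ of $\hat A$ onto $\hat\sigma^\rho(k\log^2(\varphi(\alpha)),-)$ and then to pass to exponentials. Being induced by an automorphism $f$ of $\pi$, the map $\varphi$ is a filtered Hopf algebra automorphism of $\hat A$: it commutes with the multiplication $\mu$, with the comultiplication $\hat\Delta$ (since $f\otimes f$ sends $x\otimes x$ to $f(x)\otimes f(x)$ for $x\in\pi$), with the antipode $S$ (since $f$ commutes with inversion in $\pi$), and with the counit $\hat\aug$ (any filtered algebra automorphism does); moreover $\varphi(\hat A_m)=\hat A_m$ for all $m$ and $\varphi(\hat\pi)=\hat\pi$. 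In particular $\varphi(\alpha)\in\hat\pi$, and since $\varphi$ is $\rho$-preserving and commutes with $\hat\aug$,
$$\varphi(\alpha)\mediumdott{\rho}\varphi(\alpha)=\hat\aug\bigl(\rho(\varphi(\alpha),\varphi(\alpha))\bigr)=\hat\aug\bigl(\varphi(\rho(\alpha,\alpha))\bigr)=\hat\aug\bigl(\rho(\alpha,\alpha)\bigr)=\alpha\mediumdott{\rho}\alpha=0,$$
so the twist $t_{k,\varphi(\alpha)}$ is well defined.

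Next I would prove that $\varphi$ \emph{preserves the derived form}: $\varphi(\hat\sigma^\rho(a,b))=\hat\sigma^\rho(\varphi(a),\varphi(b))$ for all $a,b\in\hat A$. Since $\hat\sigma^\rho=\hat\sigma^{\hat\rho}$ and $\varphi$, being filtered and compatible with $\iota:A\to\hat A$, also preserves the filtered F-pairing $\hat\rho$, one may replace $\rho$ by $\hat\rho$ and assume $\rho$ filtered. Then Lemma \ref{lemma_cs} writes $\hat\sigma^\rho$ as a composite of the iterated multiplications, the iterated comultiplications, the antipode $S$, the permutation $P_{4213}$, and $\rho$ itself. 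Each of these maps commutes with the appropriate tensor power of $\varphi$ — for $P_{4213}$ this is immediate, for the Hopf operations it was recorded above, and for $\rho$ it is the hypothesis — hence so does the composite: $\varphi\circ\hat\sigma^\rho=\hat\sigma^\rho\circ(\varphi\hat\otimes\varphi)$ as maps $\hat A\hat\otimes\hat A\to\hat A$. Evaluating on $a\hat\otimes b$ gives the claim.

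Now I would check that $\varphi(\log^2(\alpha))=\log^2(\varphi(\alpha))$: since $\log(\alpha)=\sum_{j\geq 1}\frac{(-1)^{j-1}}{j}(\alpha-1)^j$ converges in $\hat A$ and $\varphi$ is a filtered algebra automorphism with $\varphi(1)=1$, we get $\varphi(\log(\alpha))=\sum_{j\geq 1}\frac{(-1)^{j-1}}{j}(\varphi(\alpha)-1)^j=\log(\varphi(\alpha))$, whence $\varphi(\log^2(\alpha))=\varphi(\log(\alpha))^2=\log^2(\varphi(\alpha))$. Put $a=k\log^2(\alpha)$, so that $\varphi(a)=k\log^2(\varphi(\alpha))$. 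By the previous paragraph, $\varphi\circ\hat\sigma^\rho(a,-)=\hat\sigma^\rho(\varphi(a),-)\circ\varphi$, and hence by induction $\varphi\circ\hat\sigma^\rho(a,-)^j=\hat\sigma^\rho(\varphi(a),-)^j\circ\varphi$ for every $j\geq 0$. Both derivations are weakly nilpotent by Lemma \ref{le5}, and $\varphi$, being filtered, commutes with the projective limits defining the two exponentials; summing over $j$ therefore yields $\varphi\circ\ee^{\hat\sigma^\rho(a,-)}=\ee^{\hat\sigma^\rho(\varphi(a),-)}\circ\varphi$, that is, $\varphi\circ t_{k,\alpha}=t_{k,\varphi(\alpha)}\circ\varphi$, which is precisely the asserted commutativity of the diagram.

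The step requiring the most care is the middle one — that $\varphi$ preserves $\hat\sigma^\rho$ — and within it the verification that $\varphi$ is a genuine Hopf automorphism (commuting with $\hat\Delta$ and $S$); this is exactly where one uses that $\varphi$ is induced by an automorphism of $\pi$, and not merely a $\rho$-preserving algebra automorphism of $\hat A$. Everything else, namely the reduction to a filtered pairing and the interchange of $\varphi$ with the series $\log$ and $\exp$, is routine bookkeeping with the filtrations.
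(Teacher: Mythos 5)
Your proof is correct and follows essentially the same route as the paper, which simply asserts that all the constructions (the derived form, $\log$, $\exp$) are natural under group-induced, pairing-preserving automorphisms; you supply the verification of that naturality in detail, using Lemma \ref{lemma_cs} to transport $\hat\sigma^\rho$ and then conjugating the weakly nilpotent derivation before exponentiating. The one point worth noting is that the naturality of $\hat\sigma^\rho$ can also be seen directly from its definition, since $\varphi(v^u)=\varphi(v)^{\varphi(u)}$ for an automorphism induced by a group automorphism, but your detour through the Hopf-algebraic formula is equally valid.
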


\subsection{Remark}    If $\kk=\RR$ and the group $\pi$ is finitely generated, then we
can define  $t_{k,\alpha} $  for all $\alpha \in \hat \pi$ (i.e.,
without the assumption $\alpha \mediumdott{\rho} \alpha =0$).  We
proceed as follows. For each $m\geq 1$, let $d_m$ be the derivation
of the   algebra $\hat A/\hat A_m\simeq A/I^m$  induced by $d=\hat
\sigma (k\log^2 (\alpha ) , -):\hat A\to \hat A$.
 Since  $\kk=\RR$ and $\pi$ is finitely generated,
 $A/I^m$ is a  finite dimensional   real vector space. Then the map $d_m$ has an exponential  $\ee^{d_m}: A/I^m\to A/I^m$, and the projective limit  $t_{k,\alpha} =\underleftarrow{\lim} \,
\ee^{d_m}$ is a well-defined automorphism of $\hat A$. It
generalizes the twist defined above in the case $\alpha
\mediumdott{\rho} \alpha =0$ and has similar properties. Note   that
$t_{k,\alpha} $  is differentiable as a function of  $k$ and its
first derivative is equal to ${\hat \sigma (\log^2 (\alpha ) }, -)
t_{k,\alpha}$. We do not dwell   on this construction because, in our
main application to surfaces,  the form $\mediumdott{\rho}$ is
skew-symmetric and the condition
 $\alpha \mediumdott{\rho} \alpha =0$ is  met for all $\alpha\in \hat \pi$.

\section{Twists as Hopf algebra automorphisms}\label{Hopf}

We now show that the twists, which we defined in the previous section,
preserve the complete Hopf algebra structure of $\hat A$.

\subsection{H-automorphisms of $\hat A$}\label{H-auto}
By an \emph{H-automorphism} of   $\hat A$ we mean a $\kk$-linear
filtered algebra isomorphism $\varphi:\hat A \to \hat A$  preserving
comultiplication. Thus,  $\varphi$ is an algebra isomorphism,
  $\varphi (\hat A_m)=\hat A_m$ for all $m\geq 1$, and $\hat \Delta \varphi=(\varphi \hat \otimes \varphi) \hat \Delta$.
 The group of  H-automorphisms of $\hat A$ is denoted
$\Aut(\hat A)$. Note that any H-automorphism   of $\hat A$ commutes
with the counit $\hat \aug$   because all filtered algebra
automorphisms of   $\hat A$ do  so, see Section \ref{kd4}.
 Any H-automorphism $\varphi$ of $\hat A$ commutes with the antipode $S$: since   $S \vert_{\hat \pi}$ is the group
   inversion and $\varphi \vert_{\hat \pi}$ is an automorphism of
   $\hat \pi$ we have  $S\varphi \vert_{\hat \pi}=\varphi S\vert_{\hat
   \pi} $;
   since   $\hat \pi  $ generates $\hat A/\hat A_m$ as a
   $\kk$-module for all $m$, we have  $S\varphi  =  \varphi S$.

An automorphism of the group $\hat \pi\subset \hat A$  is said to be
{\it filtered} if it carries $ \hat \pi_{(m)} = \hat \pi \cap
(1+\hat A_m)$ onto itself for all $m\geq 1$. Denote  the group of
filtered automorphisms of $\hat \pi$ by $\Aut(\hat \pi)$. Each
H-automorphism of $\hat A$  restricts to a filtered   automorphism
of~$\hat \pi $. Since $\hat \pi$ generates the $\kk$-module $\hat
A/\hat A_m$ for all $m$, the restriction to $\hat \pi$ defines an
injective group homomorphism
\begin{equation}\label{to_Malcev}
 \Aut(\hat A) \stackrel{}{\longrightarrow} \Aut(\hat \pi).
\end{equation}
For $\kk=\QQ$,  this homomorphism   is an isomorphism: see  \cite{Qu}, Theorem 3.3.

\begin{theor}\label{twist_Hopf} Let $\rho$ be an F-pairing
in $\hat A$. For any $k\in \kk$ and any  $\alpha\in \hat \pi$   such
that $\alpha\mediumdott{\rho} \alpha=0$, the twist $t_{k,\alpha}$ is
an H-automorphism of $\hat A$. Consequently, $t_{k,\alpha} (\hat
\pi)=\hat \pi$.
\end{theor}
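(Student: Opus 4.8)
The plan is to show that $t_{k,\alpha}$ is a filtered algebra automorphism that in addition commutes with the comultiplication $\hat\Delta$; the statement that $t_{k,\alpha}(\hat\pi)=\hat\pi$ is then immediate, since $\hat\pi$ is characterized as the set of nonzero group-like elements and an H-automorphism preserves that characterization. That $t_{k,\alpha}$ is a filtered algebra automorphism has already been established in Section~\ref{twists}: by Lemma~\ref{le5} the derivation $d:=\hat\sigma(k\log^2(\alpha),-)$ is weakly nilpotent (using $\alpha\mediumdott{\rho}\alpha=0$ and $\log^2(\alpha)\in(\alpha-1)^2+\hat A_3$), so $t_{k,\alpha}=\ee^{d}$ is a filtered algebra automorphism by the discussion in Section~\ref{kd4}. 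So the entire content of the theorem is the compatibility with $\hat\Delta$.

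The key step is to reduce the Hopf compatibility of $\ee^d$ to an infinitesimal statement about the derivation $d$ itself. Concretely, I would prove that $d$ is a \emph{coderivation}, i.e.
\begin{equation}\label{coderiv}
\hat\Delta\, d \;=\; (d\,\hat\otimes\,\id_{\hat A} + \id_{\hat A}\,\hat\otimes\, d)\,\hat\Delta \colon \hat A \to \hat A\,\hat\otimes\,\hat A.
\end{equation}
Granting \eqref{coderiv}, the identity $\hat\Delta\,\ee^d=(\ee^d\,\hat\otimes\,\ee^d)\,\hat\Delta$ follows by a standard exponentiation argument: iterating \eqref{coderiv} gives $\hat\Delta\,d^{\,n}=\sum_{i+j=n}\binom{n}{i}(d^{\,i}\hat\otimes d^{\,j})\hat\Delta$, and summing $\frac{1}{n!}$ times this over $n\ge 0$ (which converges in the completed setting because $d$ is weakly nilpotent, exactly as in Section~\ref{kd4}) yields the multiplicativity of $\ee^d$ for $\hat\Delta$. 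Thus the heart of the matter is establishing \eqref{coderiv}.

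To prove \eqref{coderiv} I would use the closed formula \eqref{condensed_sigma} of Lemma~\ref{lemma_cs} for $\hat\sigma^\rho$ (after replacing $\rho$ by its filtered modification $\hat\rho$, which by Section~\ref{COCO5} changes neither $\mediumdott{\rho}$ nor $\hat\sigma^\rho$, so it is harmless). Writing $a=k\log^2(\alpha)$, the derivation $d(b)=\hat\sigma^\rho(a,b)$ is a composite built from $\hat\Delta$, the antipode $S$, the multiplication $\mu$, permutations of tensor factors, and one insertion of $\rho$ in the middle slot with $a$ fixed; so checking \eqref{coderiv} amounts to a diagram chase with these operations, using that $\hat\Delta$ is coassociative, cocommutative-free but compatible with $S$ and $\mu$ in the usual Hopf-algebra ways. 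The cleanest route, rather than manipulating \eqref{condensed_sigma} directly, is probably to verify \eqref{coderiv} on the generating set $\{b\in\hat\pi\}$: since $\hat\pi$ spans $\hat A/\hat A_m$ for every $m$ and both sides of \eqref{coderiv} are filtered $\kk$-linear maps, it suffices to check it for group-like $b$. For $b\in\hat\pi$ we have $\hat\Delta(b)=b\hat\otimes b$, and using \eqref{condensed_sigma} together with $\hat\Delta(a)$ (which, since $a$ is a polynomial in $\log(\alpha)$ and $\log(\alpha)$ is primitive, is $\sum$ of terms $\log^i(\alpha)\hat\otimes\log^{2-i}(\alpha)$ times binomial coefficients) one computes $\hat\Delta\,\hat\sigma^\rho(a,b)$ and compares it with $\hat\sigma^\rho(a,b)\hat\otimes b + b\hat\otimes\hat\sigma^\rho(a,b)$; the Leibniz/derivation properties of $\hat\sigma^\rho$ from Lemma~\ref{levuvuvuv+} and the conjugation-invariance in the first slot do the bookkeeping.

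The main obstacle I anticipate is purely combinatorial: keeping track of the four tensor factors and the permutation $P_{4213}$ in \eqref{condensed_sigma} while applying $\hat\Delta$, so that the "diagonal" terms reassemble correctly into $d\hat\otimes\id+\id\hat\otimes d$ after using coassociativity and the antipode axiom $\mu(S\hat\otimes\id)\hat\Delta=\eta\hat\aug$. A secondary subtlety is making sure the exponentiation argument for \eqref{coderiv}$\Rightarrow$ Hopf-multiplicativity is legitimate in the completed tensor product $\hat A\,\hat\otimes\,\hat A$ — but this is exactly the same weak-nilpotence bookkeeping already used to define $\ee^d$, applied now to $d\hat\otimes\id+\id\hat\otimes d$ on $\hat A\,\hat\otimes\,\hat A$, which is again weakly nilpotent with respect to the tensor-product filtration. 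Once the diagram chase for \eqref{coderiv} is done, everything else is formal.
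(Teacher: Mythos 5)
Your overall architecture is exactly the paper's: reduce everything to showing that $d=\hat\sigma(k\log^2(\alpha),-)$ is a coderivation in the sense of \eqref{coderi}, then exponentiate (the paper's Lemmas \ref{sigma_comultiplication+} and \ref{sigma_comultiplication}), and deduce $t_{k,\alpha}(\hat\pi)=\hat\pi$ from preservation of group-likes. The exponentiation step and the reduction to checking \eqref{coderiv} on group-like (or $\iota(\pi)$-) elements are correctly handled.

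However, there is a genuine gap at the step you yourself identify as the heart of the matter. You propose to establish the coderivation property by a ``purely combinatorial'' diagram chase with \eqref{condensed_sigma}, coassociativity, and the antipode axiom. This cannot work as described, because the coderivation property is \emph{not} a formal consequence of the Hopf structure together with \eqref{condensed_sigma}: it depends essentially on the first argument being $k\log^2(\alpha)$. For instance, $\hat\sigma(\alpha,-)$ with $\alpha$ group-like satisfies all the structural properties you invoke (Leibniz in the second slot, conjugation invariance in the first slot, the closed formula \eqref{condensed_sigma}), yet it is not a coderivation: for group-like $b$ one gets $\hat\sigma(\alpha,b)=\sum_x k_x\, b x^{-1}\alpha x$, a combination of group-like elements, so $\hat\Delta\hat\sigma(\alpha,b)=\sum_x k_x (bx^{-1}\alpha x)\hat\otimes(bx^{-1}\alpha x)$, which differs from $\hat\sigma(\alpha,b)\hat\otimes b+b\hat\otimes\hat\sigma(\alpha,b)$. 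The missing ingredient is the explicit computation of $\hat\sigma(u(\alpha),b)$ for a power series $u$ (the paper's Lemma \ref{sigma_formula}), which yields $\hat\sigma(k\log^2(\alpha),b)=2k\,b\,(\log\alpha)^{\rho(\alpha,b)}\equiv 2k\sum_x k_x\, b\log(x^{-1}\alpha x)$ modulo high filtration. It is precisely because each $\log(x^{-1}\alpha x)$ is \emph{primitive} that $\hat\Delta d(b)=d(b)\hat\otimes b+b\hat\otimes d(b)$ for group-like $b$. Your remark about $\hat\Delta(k\log^2\alpha)$ being a sum of $\log^i(\alpha)\hat\otimes\log^{2-i}(\alpha)$ does not substitute for this: $\hat\Delta$ of the \emph{output} $\hat\sigma(a,b)$ is not computed by feeding $\hat\Delta(a)$ into \eqref{condensed_sigma}, since one would also need to control $\hat\Delta$ of $\rho(a_{(2)},b_{(1)})$, which is arbitrary. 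Any correct completion of your argument must derive the closed formula for $\hat\sigma(\log^2(\alpha),b)$ and exploit the primitivity of $\log$ of a conjugate of $\alpha$; that derivation is the real content of the proof and is absent from your proposal.
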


The restriction of $t_{k,\alpha}$ to $\hat \pi$ is a filtered  group
automorphism of $\hat \pi$   also denoted  $t_{k,\alpha}$ and
called   the \emph{twist}.    As we saw in Section \ref{prop_twists},
the induced automorphism $(t_{k, \alpha})_\ast $
of $H=H_1(\pi;\kk)\simeq \hat\pi/ \hat \pi_{(2)}$ is the transvection
carrying any $h\in H $ to $(t_{k, \alpha})_\ast (h)= h+ 2 k
([\alpha] \mediumdott{\rho} h) [\alpha]$. Thus we view $t_{k, \alpha}: \hat
\pi \to \hat \pi$ as a non-abelian transvection. The injectivity of
\eqref{to_Malcev} shows that  we loose no information restricting
the twists from $\hat A$ to $\hat \pi$.
Generally speaking, the homomorphism $(t_{k, \alpha})_\ast :H\to H$
does not preserve the form $\mediumdott{\rho}$ in $H$. It does
preserve  $\mediumdott{\rho}$ if $[\alpha ]\mediumdott{\rho} h
+h\mediumdott{\rho} [\alpha]=0$ for all $h\in H$.

\subsection{Proof of Theorem \ref{twist_Hopf}}
 Our first step is to   define $v^u$ for all $u,v\in \hat A$.

\begin{lemma}\label{levuvuvuvNEW}
If $v_1,v_2 \in A$ verify $v_1 \equiv v_2\ (\modu I^m)$   with
$m\geq 1$, then $v_1^{u} \equiv  v_2^u\ (\modu I^m)$  for all $u\in
A$.
\end{lemma}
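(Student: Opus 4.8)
The plan is to exploit the bilinearity of the operation $(u,v)\mapsto v^u$ to reduce the statement to a triviality about conjugation. Setting $w=v_1-v_2\in I^m$, linearity of $v^u$ in the variable $v$ shows that the assertion is equivalent to the following: $w^u\in I^m$ for every $w\in I^m$ and every $u\in A$. Since $v^u$ is also linear in $u$ and $A=\kk[\pi]$ is $\kk$-spanned by the elements of $\pi$, it is enough to treat the case $u=x$ for a single $x\in\pi$.

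In that case $w^x=x^{-1}wx$ directly from the definition. Conjugation by $x$ is a $\kk$-algebra automorphism of $A$ that maps $\pi$ bijectively onto itself; hence it commutes with the augmentation $\aug$ (both send every element of $\pi$ to $1$), so it preserves the fundamental ideal $I=\Ker(\aug)$ and therefore every power $I^m$. Thus $w^x=x^{-1}wx\in I^m$, which is exactly what is required.

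I do not expect any real obstacle: in contrast with Lemma \ref{levuvuvuv}, where one varies $u$ and must run an induction on the length of a product $(a_1-1)\cdots(a_m-1)$ together with the identity $v^{w(a-1)}=a^{-1}v^w a-v^w$, varying $v$ only uses that conjugation by a group element is filtration-preserving. Once this lemma is established it combines with Lemma \ref{levuvuvuv} to let us define $v^u\in\hat A$ for arbitrary $u,v\in\hat A$ in the usual way: choose $u_m,v_m\in A$ agreeing with $u,v$ modulo $I^m$, observe that the class of $v_m^{u_m}$ in $A/I^m$ is then independent of these choices, and pass to the projective limit over $m$; the resulting element is $\kk$-bilinear in $u$ and $v$ and extends the previously defined $v^u$ for $u\in\hat A$, $v\in A$.
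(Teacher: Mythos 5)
Your argument is correct and is essentially the paper's own proof: the paper likewise reduces to showing $w^u\in I^m$ for $w=v_1-v_2\in I^m$ via linearity in $v$, and then invokes the fact that $I^m$ is a two-sided ideal (which is exactly what your observation about conjugation preserving $I^m$ amounts to). No issues.
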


  \begin{proof}   This lemma follows from the linearity of $v^u$
  with respect to $v\in A$ and the fact that $I^m$ is a two-sided ideal of $A$.
\end{proof}

Given $u, v \in \hat A$, we define $v^u\in \hat A$ as follows. For
each $m\geq 1$, pick $u_m, v_m \in A$ such that their projections to
$A/I^m$ are equal to the projections of $u,v$ to $A/I^m$,
respectively. By Lemmas \ref{levuvuvuv} and \ref{levuvuvuvNEW}, $
(v_m)^{u_m}\ (\modu I^m)$ does not depend on the choice of $u_m$ and
$v_m$. Hence the sequence $(v_m)^{u_m}\ (\modu I^m)$ with $m=1,2,
\ldots $ represents a well-defined element $v^u$ of $\hat A$.   Clearly, $v^u$ is linear in  both $u$ and $v$. 
If $v=\iota (v')$ for some $v' \in A$, then  $v^u= (v')^{u}$ with the notation of Section \ref{COCO4}.

 Note that, given a formal
power series $u(z)\in \kk\langle\langle z-1\rangle\rangle$, we can
substitute any
 $a \in   1+\hat A_1$ for $z$ and obtain  thus a well-defined element $u(a)$ of
$   \hat A$.

\begin{lemma}\label{sigma_formula}
If $\rho$ is filtered, then for any     $a,b \in \hat \pi$ and $k\in
\kk$,
\begin{equation}\label{sigma_formula+}
\hat \sigma(k \log^2({a}),b) =  2k b \,   (\log(a))^{\rho (a,b)} \in \hat A.
\end{equation}
\end{lemma}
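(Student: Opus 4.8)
The plan is to establish \eqref{sigma_formula+} by first reducing to the case $k=1$, since both sides are $\kk$-linear in $k$, and then computing $\hat\sigma(\log^2(a),b)$ by expanding $\log^2(a)$ as a power series in $(a-1)$ and using the multiplicativity properties of $\hat\sigma$ in the first variable. The key structural facts I would invoke are: Lemma \ref{levuvuvuv+}(i) (equivalently the corresponding statement in Section \ref{COCO4}), which says $\hat\sigma(ab',c)=\hat\sigma(b'a,c)$ and $\hat\sigma(\cdot,-)$ is a derivation in the second variable; the conjugation invariance $\hat\sigma(cac^{-1},-)=\hat\sigma(a,-)$; and the explicit formula for $\hat\sigma$ on group-like elements coming from \eqref{condensed_sigma} (valid since $\rho$ is filtered), namely that for $a,b\in\hat\pi$ one has $\hat\sigma(a,b)=b\,a^{\rho(a,b)}$ in the appropriate completed sense — this is how the derived form was \emph{defined} on $\pi\times\pi$ and it extends to $\hat\pi$ by the Step 1--Step 3 construction together with the substitution principle for power series in $1+\hat A_1$.

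The central computation I would carry out is the following. For a group-like element $a$, write $L=\log(a)\in\mathcal P(\hat A)$, so that $a=\exp(L)$ and $L\in(a-1)+\hat A_3$ lies in the Malcev Lie algebra. I would compute $\hat\sigma(a^n,b)$ for $n\in\ZZ$: since $a^n$ is group-like with $\log(a^n)=nL$ and $\rho(a^n,b)=\bigl(\sum_{i=0}^{n-1}a^i\bigr)\rho(a,b)$ (from \eqref{equ1} applied to $\rho$, as in the proof of Lemma \ref{le5}), one gets $\hat\sigma(a^n,b)=b\,(a^n)^{\rho(a^n,b)}$, and a telescoping argument using the identities \eqref{obiousids} and the derivation property should yield $\hat\sigma(a^n,b)=n\,\hat\sigma(a,b)+(\text{correction terms involving }a)$ — more precisely I expect something like $\hat\sigma(a^n,b)=b\sum_{i} a^i (\ast) a^{-i}$-type sums collapsing. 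The cleanest route, however, is probably to differentiate formally: consider $g(t)=\hat\sigma(a^t,b)$ as a power series in $t$ (using $a^t=\exp(tL)$, which makes sense since $L\in\hat A_1$), note $g(0)=0$, and show $g'(t)=2b\,L^{\rho(a,b)}\cdot(\text{something})$ leading to $g(t)$ being given by $\log$. Alternatively — and this is the approach I would actually write up — expand $\log^2(a)=(a-1)^2 - (a-1)^3 + \tfrac{11}{12}(a-1)^4-\cdots$ as a power series $\sum_{j\ge 2}c_j(a-1)^j$, use $\hat\sigma\bigl((a-1)^j,b\bigr)$, which one computes from the first-variable product formula \eqref{equ1} for $\rho$ and the derivation/conjugation properties, and match the result term by term against the power series expansion of $2b\,(\log a)^{\rho(a,b)}=2b\sum_{x}k_x(\log a)^{-1}\cdots$ — wait, more carefully: $v^u$ for $u=\rho(a,b)=\sum k_x x$ is $\sum k_x x^{-1}vx$, so $b\,(\log a)^{\rho(a,b)}=\sum_x k_x\, b\, x^{-1}(\log a)x$.

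The main obstacle — and the step deserving the most care — is bookkeeping the power-series identity relating $\log^2$ on the "algebra side" to $\log$ appearing linearly on the "$v^u$ side." The crux is an identity of the shape: for group-like $a$ with $L=\log a$,
\begin{equation*}
\sum_{j\ge 2} c_j\,\hat\sigma\bigl((a-1)^j,b\bigr) \;=\; 2b\,L^{\rho(a,b)},
\end{equation*}
where the left side, after using \eqref{equ1} for $\rho$ to write $\rho((a-1)^j,b)$ and then the formula $\hat\sigma=b\cdot(a^{(\cdot)})$, becomes a double power series in $a-1$ that must be resummed. I expect this resummation to be exactly the statement that if $f(z)=\log^2(z)$ then the relevant "derived" operation sends $f$ to $2\log(z)$ times the derivative-like term — essentially because $\tfrac{d}{dz}\log^2(z)=\tfrac{2\log z}{z}$, and the $b\,a^{(\cdot)}$ construction is a derivation-type device that implements $f\mapsto f'(a)\cdot a$ up to the conjugation twist encoded by $v^u$. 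So the proof reduces to: (1) prove $\hat\sigma(\exp(L),b)=b\,\bigl(\text{(something linear in }L\text{, namely }(\exp L - 1)\text{-related})\bigr)^{\rho(\exp L,b)}$ for $L\in\hat A_1$, by reducing to the group case via the $\log/\exp$ bijection \eqref{glike_primitive} and continuity/filtration arguments; (2) feed $L^2$-type input, i.e.\ compute $\hat\sigma(L^2,-)$ where here $L^2=\log^2(a)$, using that $\hat\sigma(\cdot,b)$ vanishes on $\hat A_1\hat A_1$-commutators appropriately and that $\rho(L^2,b)=\rho(L,b)\cdot\aug(\cdots)+L\rho(L,b)+\cdots$ simplifies because $\aug(L)=0$; the product formula \eqref{equ1} for $\rho$ with $\aug$-terms killed gives $\rho(L^2,b)=L\,\rho(L,b)$, hence (dropping into the $v^u$-notation and using $\hat\sigma(L^2,b)=b\,(L^2)^{\rho(L^2,b)}$ in the limiting sense, plus $\hat\sigma(xy,b)=\hat\sigma(yx,b)$ to cyclically permute) one reaches $\hat\sigma(L^2,b)=2b\,L^{\rho(L,b)}=2b\,(\log a)^{\rho(a,b)}$, since $\rho(L,b)=\rho(\log a,b)$ and $\rho(a,b)=\rho(\exp L,b)$ differ only by $\aug$-killed and filtration-higher terms that wash out after applying $(\cdot)^{(\cdot)}$ and taking the projective limit. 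Restoring the factor $k$ by linearity finishes the proof. I would take care, at the very end, to justify the passage from $\rho(\log a,b)$ to $\rho(a,b)$ inside the exponent: since $\log a \equiv a-1\ (\modu \hat A_2)$ and $\rho$ is filtered, $\rho(\log a,b)\equiv\rho(a-1,b)=\rho(a,b)\ (\modu\hat A_?)$, and Lemmas \ref{levuvuvuv} and \ref{levuvuvuvNEW} (extended to $\hat A$) guarantee $v^u$ only depends on $u$ modulo the filtration in the matching degree, so the two expressions agree in $\hat A$.
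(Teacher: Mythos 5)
You have correctly located the heart of the matter --- the general identity $\hat\sigma(u(a),b)=b\,(au'(a))^{\rho(a,b)}$ implementing $f\mapsto zf'(z)$, which is exactly the statement the paper proves and then specializes to $u(z)=k\log^2(z)$ --- and your ``approach I would actually write up'' (work modulo $\hat A_m$, expand in powers of $a-1$, compute $\hat\sigma((a-1)^N,b)$ and resum) is the paper's route. But the computation that would make this work is never carried out, and the substitute you offer in step (2) is invalid. The formula $\hat\sigma(u,b)=b\,u^{\rho(u,b)}$ holds \emph{only} for group-like $u$: the derived form is defined by this expression on $\pi\times\pi$ and then extended by $\kk$-bilinearity, and since the right-hand side is quadratic in $u$ the diagonal formula does not survive that extension. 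Applying it ``in a limiting sense'' to $u=L^2=\log^2(a)$, which is not group-like, is therefore not legitimate, and indeed no valid mechanism in your argument ever produces the factor $2$. In the paper that factor (more generally the factor $N$ in $Na(a-1)^{N-1}$) comes from the concrete computation $\rho(a^n,b)=\sum_{i=0}^{n-1}a^i\rho(a,b)$ combined with the collapse $(a^ix)^{-1}a^n(a^ix)=x^{-1}a^nx$, giving $\hat\sigma(a^n,b)\equiv n\,b\,(a^n)^{\rho(a,b)}\ (\modu\hat A_m)$, followed by the binomial identity $\sum_{n=0}^N(-1)^{N-n}\binom{N}{n}\,n\,a^n=Na(a-1)^{N-1}$.

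A second, independent problem is your closing claim that $2b\,L^{\rho(L,b)}=2b\,L^{\rho(a,b)}$. Since $\log(a)-(a-1)\in\hat A_2$ and $\rho$ is filtered, one only gets $\rho(\log a,b)\equiv\rho(a,b)\ (\modu\hat A_1)$, so the two exponents differ by an element of $\hat A_1$ and the two expressions differ by an element of $\hat A_2$ --- they are genuinely different elements of $\hat A$, not equal ``in the projective limit.'' The correct statement has the group element $a$ in the exponent precisely because the whole computation is carried out on the group-likes $a^n$ (where $\rho(a^n,b)$ conjugates back to $\rho(a,b)$), never on $\log(a)$. So the proposal identifies the right target but contains a genuine gap where the paper does its real work.
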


\begin{proof} We shall prove the following  more general statement: For any    formal power series $u(z)\in  \kk\langle\langle
z-1\rangle\rangle$,
\begin{equation}\label{formulaforu}
\hat \sigma(u({a}),b) =   b \,  (a u'(a))^{\rho (a,b)}  \ \in \hat A.
\end{equation}
Here $u'(z)\in \kk\langle\langle z-1\rangle\rangle$ is the formal
derivative of $u(z)$. For $u(z)=k \log^2(z)$, we have $zu'(z)= 2k
\log(z)$ and \eqref{formulaforu} specializes to \eqref{sigma_formula+}.

It is enough to prove   \eqref{formulaforu} modulo $\hat A_m$ for
each $m\geq 1$. Fix $m$ and  expand
 $  \rho^{(m+1)} (a,b) =  \sum_{x\in \pi} k_x x \ (\modu I^m)$ where $k_x\in \kk$
and $k_x=0$ except for a finite set of $x\in \pi$.   For  each
$n\geq 1$,
$$\rho^{(m+1)} (a^n,b)=  \sum_{i=0}^{n-1} a^i \rho^{(m+1)} (a,b)=   \sum_{i=0}^{n-1} \sum_{x\in \pi} k_x a^i x  \ (\modu I^m).$$
Formula \eqref{condensed_sigma} implies that modulo $\hat A_m$
$$
\hat \sigma(a^n,b) \equiv  \sum_{i=0}^{n-1} \sum_{x\in \pi} k_x  b (a^ix )^{-1} a^n a^i x
= \sum_{x\in \pi} n k_x   b   x^{-1} a^n   x.
$$
Then, for all  $N\geq 1$, we have the following equalities and
congruences modulo $\hat A_m$:
\begin{eqnarray*}
\hat\sigma\left((a-1)^N,b \right) &= &\sum_{n=0}^N (-1)^{N-n} {N\choose n}  \hat \sigma(a^n, b)\\
&\equiv & \sum_{x\in \pi } k_x
 \left(\sum_{n=1}^N  (-1)^{N-n} {N\choose n}  n b x^{-1} a^n   x  \right)  \\
&=& b \sum_{x\in \pi} k_x x^{-1}
\left(N    \sum_{n=1}^N (-1)^{N-n}  \frac{(N-1)!}{(n-1)!(N-n)!}     a^{n}\right) x \\
 &=& b\sum_{x\in \pi} k_x    x^{-1} N a    ( a  -1)^{N-1} x\\
 &=&  b \,    \left(Na    ( a  -1)^{N-1}\right)^{\rho^{(m+1)} (a,b)} 
 \ \equiv \ b \,    \left(Na    ( a  -1)^{N-1}\right)^{\rho  (a,b)} .\\
\end{eqnarray*}
Hence $\hat \sigma(u({a}),b) \equiv   b \,  (a u'(a))^{\rho (a,b)}
\, (\modu \hat A_m)$. This implies \eqref{formulaforu}.
\end{proof}

 \begin{lemma}\label{sigma_comultiplication+} Let   $d : \hat A\to \hat
 A$ be
 a coderivation in the sense that $d$ is $\kk$-linear and
\begin{equation}\label{coderi}\hat \Delta d  =  (d
{\hat \otimes} \id +\id {\hat \otimes} d)  \hat \Delta : \hat A\to
\hat A\hat \otimes \hat A.\end{equation} If $d$ is  weakly
nilpotent, then   $\ee^d:\hat A\to \hat A$ preserves the
comultiplication   in $\hat A$.
\end{lemma}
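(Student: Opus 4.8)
The plan is to show that $\ee^d$ is a coalgebra morphism by lifting the coderivation identity \eqref{coderi} from $\hat A$ to the tensor square $\hat A \hat \otimes \hat A$ and then exponentiating there. The key observation is that the two maps $d \hat \otimes \id$ and $\id \hat \otimes d$ on $\hat A \hat \otimes \hat A$ commute with each other, so the operator $D := d \hat \otimes \id + \id \hat \otimes d$ satisfies $\ee^D = (\ee^d \hat \otimes \id)(\id \hat \otimes \ee^d) = \ee^d \hat \otimes \ee^d$. For this to make sense one must first check that $D$ is weakly nilpotent on $\hat A \hat \otimes \hat A$ with respect to its natural filtration $(\hat A \hat \otimes \hat A)_m$; this follows from the weak nilpotence of $d$ together with $d(\hat A_m) \subset \hat A_m$, since $D$ preserves each filtration term and a sufficiently high power of $D$ maps everything into $(\hat A \hat \otimes \hat A)_m$ (expand $D^N$ by the binomial formula and use that each summand contains a large power of $d$ in one of the two tensor factors).

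Next I would iterate the coderivation identity. From $\hat \Delta d = D \hat \Delta$ one gets by induction $\hat \Delta d^k = D^k \hat \Delta$ for all $k \geq 0$: indeed $\hat \Delta d^{k} = (\hat \Delta d) d^{k-1} = D (\hat \Delta d^{k-1}) = D \cdot D^{k-1} \hat \Delta = D^k \hat \Delta$. Dividing by $k!$ and summing over $k$ — which is legitimate because modulo any $(\hat A \hat \otimes A)_m$ only finitely many terms are nonzero, exactly as in the definition of $\ee^d$ in Section \ref{kd4} — yields $\hat \Delta \ee^d = \ee^D \hat \Delta$. Combining this with $\ee^D = \ee^d \hat \otimes \ee^d$ from the previous paragraph gives $\hat \Delta \ee^d = (\ee^d \hat \otimes \ee^d) \hat \Delta$, which is precisely the statement that $\ee^d$ preserves comultiplication.

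The main technical point, and the only place where a small argument is genuinely needed, is the commutation relation $\ee^D = \ee^d \hat \otimes \ee^d$ together with the interchange of the infinite sums that it requires. Concretely, since $d \hat \otimes \id$ and $\id \hat \otimes d$ commute as endomorphisms of $\hat A \hat \otimes \hat A$, the binomial expansion gives $D^k = \sum_{i+j=k} \binom{k}{i} (d^i \hat \otimes \id)(\id \hat \otimes d^j) = \sum_{i+j=k} \binom{k}{i} (d^i \hat \otimes d^j)$, so
\[
\ee^D = \sum_{k \geq 0} \frac{D^k}{k!} = \sum_{k\geq 0} \sum_{i+j=k} \frac{1}{i!\,j!}(d^i \hat \otimes d^j) = \Big(\sum_{i\geq 0}\frac{d^i}{i!}\Big) \hat \otimes \Big(\sum_{j\geq 0}\frac{d^j}{j!}\Big) = \ee^d \hat \otimes \ee^d,
\]
where all rearrangements are valid modulo each $(\hat A \hat \otimes A)_m$ because only finitely many terms survive there. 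I expect everything else to be routine bookkeeping with the filtrations; the content of the lemma is entirely in these formal manipulations, which go through once weak nilpotence of $D$ is in hand.
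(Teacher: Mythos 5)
Your proposal is correct and follows essentially the same route as the paper's proof: iterate the coderivation identity to get $\hat\Delta d^r = (d\hat\otimes\id + \id\hat\otimes d)^r\hat\Delta$, expand by the binomial formula using that $d\hat\otimes\id$ and $\id\hat\otimes d$ commute, and resum to $(\ee^d\hat\otimes\ee^d)\hat\Delta$. You are in fact somewhat more explicit than the paper about the filtration bookkeeping (weak nilpotence of $D$ and the validity of the rearrangements modulo each $(\hat A\hat\otimes\hat A)_m$), which the paper leaves implicit.
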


\begin{proof} We have \begin{eqnarray*}
  \hat \Delta \ee^d \ = \
 \sum_{r\geq 0} \frac{1}{r!} \hat \Delta d^r
&=& \sum_{r\geq 0} \frac{1}{r!} (  d {\hat \otimes} \id+\id {\hat \otimes} d)^r  \hat \Delta\\
&=& \sum_{r\geq 0}  \frac{1}{r!}  \sum_{i=0}^r  {r \choose i} (d {\hat \otimes} \id)^i (\id {\hat \otimes} d)^{r-i} \hat \Delta\\
&=& \sum_{i\geq 0} \sum_{j\geq 0} \frac{1}{i! j!} (d^i {\hat
\otimes} d^j) \hat \Delta \ =  \ (\ee^d {\hat \otimes} \ee^d) \hat \Delta.
\end{eqnarray*}

\vspace{-0.5cm}
\end{proof}

\begin{lemma}\label{sigma_comultiplication}
  $d=\hat \sigma(k\log^2(a),-): \hat A\to \hat A$ is a
coderivation  for all $a\in \hat \pi$,   $k\in \kk$.
\end{lemma}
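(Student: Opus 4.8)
The plan is to verify the coderivation identity \eqref{coderi} for $d=\hat\sigma(k\log^2(a),-)$ by reducing it to group-like inputs, where it becomes a short Hopf-algebraic computation resting on Lemma \ref{sigma_formula}.

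As a first step I would, exactly as in the proof of Lemma \ref{le5}, replace $\rho$ by its associated filtered F-pairing $\hat\rho$; since $\hat\sigma^\rho=\hat\sigma^{\hat\rho}$ by Section \ref{COCO5}, this changes nothing and makes Lemma \ref{sigma_formula} available. Write $\ell=\log(a)$, which lies in $\mathcal{P}(\hat A)$ by \eqref{glike_primitive}. Because $\log^2(a)\in\hat A_2$, the derivation $d$ preserves the filtration, so both $\hat\Delta d$ and $(d\hat\otimes\id+\id\hat\otimes d)\hat\Delta$ are $\kk$-linear maps $\hat A\to\hat A\hat\otimes\hat A$ compatible with the filtrations, hence continuous. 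Since the $\kk$-span of $\iota(\pi)$ is dense in $\hat A$, it then suffices to check \eqref{coderi} on group-like elements $b\in\hat\pi$.

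So fix $b\in\hat\pi$. Then $\hat\Delta b=b\hat\otimes b$, so the right-hand side of \eqref{coderi} at $b$ equals $d(b)\hat\otimes b+b\hat\otimes d(b)$, and it remains to compute $\hat\Delta(d(b))$. By Lemma \ref{sigma_formula}, $d(b)=2k\,b\,\ell^{\rho(a,b)}$. The key point is that $\ell^{\rho(a,b)}$ is primitive. To see this I would observe that $u\mapsto\ell^u$ is $\kk$-linear and continuous, check by a direct computation that $\ell^{\iota(x)}=x^{-1}\ell x$ is primitive for each $x\in\pi$ — using that $\hat\Delta$ is an algebra homomorphism, that $\iota(x)$ is group-like, and that $\ell$ is primitive — and then conclude, by density of $\iota(A)$ and closedness of $\mathcal{P}(\hat A)$, that $\ell^u\in\mathcal{P}(\hat A)$ for all $u\in\hat A$. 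Granting this, applying the algebra homomorphism $\hat\Delta$ gives $\hat\Delta(d(b))=2k\,(b\hat\otimes b)(1\hat\otimes\ell^{\rho(a,b)}+\ell^{\rho(a,b)}\hat\otimes 1)$, and expanding the product yields precisely $d(b)\hat\otimes b+b\hat\otimes d(b)$, which is the desired right-hand side.

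The main obstacle is the primitivity of $\ell^{\rho(a,b)}$: this is the one place where one must pass from the immediately verified case of a single group element to a general element of $\hat A$ by linearity and continuity, and it is where the particular shape of the argument $k\log^2(a)$ — namely, that $\log(a)$ is primitive — enters in an essential way. The two reductions, to a filtered $\rho$ and to group-like $b$, are routine consequences of the material in Sections \ref{COCO4}–\ref{COCO5} and Section \ref{kd4}.
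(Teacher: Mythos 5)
Your proof is correct and follows essentially the same route as the paper's: reduce to a filtered $\rho$, reduce to group-like $b$ via filtration-compatibility of both sides of \eqref{coderi} and the fact that $\iota(\pi)$ generates $\hat A/\hat A_m$, apply Lemma \ref{sigma_formula}, and use primitivity of conjugates of $\log(a)$. The only (cosmetic) difference is that you isolate the primitivity of $\ell^{\rho(a,b)}$ as a general statement about $\ell^u$ for $u\in\hat A$, proved by linearity, continuity and closedness of $\mathcal{P}(\hat A)$, whereas the paper performs the same verification inline by approximating $\rho(a,b)$ by a finite sum $\sum_x k_x x$ modulo $I^m$ and using that each $\log(x^{-1}ax)$ is primitive.
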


\begin{proof} Replacing  if necessary $\rho$ by $\hat \rho$ we can assume that
$\rho$ is filtered.
Both sides of \eqref{coderi} carry the canonical filtration of $\hat
A$ into the canonical filtration of $\hat A\hat \otimes \hat A$.
Therefore it suffices to prove the equality of the induced
homomorphisms  $\hat A/\hat A_m\to \hat A \hat \otimes \hat A/ (\hat
A \hat \otimes \hat A)_m$ for all $m $. Pick any $b\in  \hat \pi$
and expand $ \rho^{(m+1)} (a,b) = \sum_{x\in \pi} k_x x \ (\modu
I^m)$ as in the proof of the previous lemma. Formula
\eqref{sigma_formula+}  gives the following congruence modulo $\hat
A_m$:
$$d(b)\equiv
   2k\sum_{x\in \pi}  k_x b x^{-1} \log(a ) x  = 2k\sum_{x\in \pi}  k_x b \log(x^{-1}ax )  .
$$
  Therefore  we have the following equalities and
congruences modulo $(\hat A \hat \otimes \hat A)_m$:
\begin{eqnarray*}
\hat \Delta(d(b))&\equiv & 2k \sum_{x\in \pi} k_x  \hat \Delta(b\log(x^{-1}ax )  )\\
&=& 2k \sum_{x\in \pi}  k_x  \hat \Delta(b) \hat \Delta(\log(x^{-1}ax )) \\
&=& 2k \sum_{x\in \pi} k_x (b{\hat \otimes} b)
\left(\log(x^{-1}ax ) \hat\otimes 1 + 1{\hat \otimes} \log(x^{-1}ax )\right) \\
&=& 2k \sum_{x\in \pi}  k_x
\left(b\log(x^{-1}ax ) \hat\otimes b + b {\hat \otimes} b\log(x^{-1}ax ) \right) \\
&\equiv & d(b) \hat\otimes b  + b \hat\otimes d(b).
\end{eqnarray*}
Here we   used   that $ \log(x^{-1}ax )  $ is primitive by
(\ref{glike_primitive}). It remains to observe that $\hat \pi\supset
\iota (\pi)$ and  the set $\iota (\pi) $ generates  $\hat A/\hat
A_m\simeq A/I^m$ as a $\kk$-module.
 \end{proof}

We can now finish the proof of Theorem \ref{twist_Hopf}.
We need only to prove
that $t_{k,\alpha}$ preserves comultiplication.
This is a direct consequence of the previous two  lemmas.

 \subsection{Remark}
  
Under certain assumptions, 
the definitions of the Fox pairings and  the  twists can be generalized 
to Hopf algebras other than $A=\kk[\pi]$. 
We plan to discuss this generalization elsewhere.

 \section{Non-degenerate Fox pairings}\label{nondegenerate}

 In this section we    study a useful class of non-degenerate Fox
 pairings.

  \subsection{Non-degenerate F-pairings}\label{kd9++}
We call   an  F-pairing $\rho$ in $\hat A$   {\it non-degenerate} if
$\rho$  is filtered, the $\kk$-module $H=H_1(\pi;\kk)$ is free of
finite rank, and the matrix of the form $\mediumdott{\rho}:H\times
H\to \kk$ with respect to a basis of $H$  is invertible over $\kk$.
Note that the second condition holds   if and only if the first
Betti number of $\pi$ is finite. This follows from the equality $H
=\kk\otimes_{\ZZ} H_1(\pi;\QQ)$ which holds because we have assumed
that $\kk\supset \QQ$. 
We shall need the following   lemma.

\begin{lemma}\label{inverting_matrices}
Let $B$ be an $n\times n$ matrix with entries in $\hat A$ where $n\geq 1$.
If the $n \times n$ matrix  
obtained from $B$ by termwise application of $\hat \aug: \hat A \to \kk$  is invertible over $\kk$, then $B$ is invertible over $\hat A$.
\end{lemma}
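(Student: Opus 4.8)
The plan is to exploit the completeness of $\hat A$ with respect to the filtration $\hat A = \hat A_0 \supset \hat A_1 \supset \cdots$, reducing the claim to the standard fact that $1 + x$ is invertible in $\hat A$ whenever $x \in \hat A_1$ (via the geometric series $\sum_{k\geq 0}(-x)^k$, which converges $I$-adically). First I would write $B = B_0 + B_1$, where $B_0$ is the matrix obtained by termwise application of the composite $\hat A \xrightarrow{\hat\aug}\kk \hookrightarrow \hat A$ (using that $\kk \simeq \hat A/\hat A_1$ sits inside $\hat A$ as the ``constants''), so that every entry of $B_1$ lies in $\hat A_1$. By hypothesis the scalar matrix $\hat\aug(B) \in M_n(\kk)$ is invertible, hence $B_0$ is invertible over $\hat A$ with inverse the scalar matrix $(\hat\aug(B))^{-1}$.

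Next I would factor $B = B_0\,(\mathrm{Id} + B_0^{-1}B_1)$. Since each entry of $B_1$ lies in the ideal $\hat A_1$ and $B_0^{-1}$ has entries in $\kk \subset \hat A$, every entry of the matrix $B_0^{-1}B_1$ lies in $\hat A_1$; that is, $B_0^{-1}B_1 \in M_n(\hat A_1)$. It then suffices to show that $\mathrm{Id} + C$ is invertible in $M_n(\hat A)$ for any $C \in M_n(\hat A_1)$. For this I would invoke the inclusions $\hat A_m \hat A_n \subset \hat A_{m+n}$ recorded in Section \ref{COCO1}: powers $C^k$ have all entries in $\hat A_k$, so for each fixed $m$ the partial sums of $\sum_{k\geq 0}(-C)^k$ stabilize modulo $M_n(\hat A_m)$, and the projective limit defines a matrix $D \in M_n(\hat A)$ with $(\mathrm{Id}+C)D = D(\mathrm{Id}+C) = \mathrm{Id}$. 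Concretely, one works level by level: $M_n(\hat A) = \underleftarrow{\lim}\, M_n(A/I^m)$, and in each finite level $M_n(A/I^m)$ the element $C$ is nilpotent, so $\mathrm{Id}+C$ is invertible there, and these inverses are compatible with the projections $A/I^{m+1}\to A/I^m$.

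Finally, combining the two factorizations, $B = B_0(\mathrm{Id} + B_0^{-1}B_1)$ is a product of two invertible matrices over $\hat A$, hence $B$ itself is invertible, with $B^{-1} = (\mathrm{Id}+B_0^{-1}B_1)^{-1}B_0^{-1}$. I do not anticipate a genuine obstacle here; the only point requiring a little care is the bookkeeping that identifies $\kk$ with $\hat A/\hat A_1$ and realizes $\hat\aug(B)$ as a ``constant'' matrix $B_0$ over $\hat A$, together with checking that the $I$-adic geometric series argument is legitimate in the completed (rather than merely filtered) setting — but this is exactly what the $\underleftarrow{\lim}\,A/I^m$ description of $\hat A$ provides, and it parallels the convergence arguments already used for the exponential in Section \ref{kd4}.
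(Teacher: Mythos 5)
Your proposal is correct and follows essentially the same route as the paper: both arguments reduce $B$ to the identity matrix by factoring out the invertible constant part $\hat\aug(B)$ and then invert $\mathrm{Id}+C$ with $C\in M_n(\hat A_1)$ via the geometric series, which converges because $M_n(\hat A)=\underleftarrow{\lim}\,M_n(A/I^m)$ is complete. The paper merely phrases this one level more abstractly, as a statement about an arbitrary complete filtered $\kk$-algebra $R$ applied to $R=M_n(\hat A)$.
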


\begin{proof}
Consider an arbitrary   $\kk$-algebra $R$ with  filtration $R=R_0\supset R_1 \supset R_2 \supset \cdots$ such that $R_i$ is a submodule of $R$ for every $i\geq 0$,   $R_iR_j\subset R_{i+j}$ for any   $i,j\geq 0$,
and the canonical map $R \to \underleftarrow{\lim}\, R/R_i$ is an isomorphism.
We claim that if $r, r'$ are elements of $R$ such that  $r'-r\in  R_1 $ and 
 $r'$ is invertible in $ R$, then  $r $ is invertible in~$ R$. Indeed, replacing $r$ and $ r'$ by   $ r(r')^{-1}$ and $1$, respectively, we can assume that $r'=1$.
  It is clear  that $(1-r)^i \in R_i$ for all $i\geq 0$.
Then $s=\sum_{i\geq 0} (1-r)^i$ is a well-defined element of $R$ such that $s(1-r)=s-1=(1-r)s $. Then $rs=sr=1$.

Let  $R$ be the algebra of $n\times n$ matrices over $\hat A$ and, for any $i\geq 0$,
let $R_i\subset R$ consist  of matrices with all entries  in $\hat A_i$.
For    $B\in R$, the matrix $B'= \hat \aug(B)$ over $\kk$
can be regarded as an element of $R$. Clearly,  $B'-B\in  R_1$.
Thus, the lemma follows from the claim above.
\end{proof}

We explain now that a non-degenerate
F-pairing in $\hat A$ is fully determined by a single element of $\hat A$.

\begin{lemma}\label{le5+}  For a   non-degenerate   F-pairing $\rho$ in $\hat A$ there is a unique $ \nabla_\rho\in \hat A_1$ such that
 $\rho (a, \nabla_\rho)= a-\hat \aug (a)$ for all $a\in \hat A$.
 The map $\rho \mapsto \nabla_\rho$ from the set of    non-degenerate  F-pairings in $\hat A$ to   $\hat A_1$ is injective.
\end{lemma}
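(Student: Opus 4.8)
The plan is to exploit the free basis of $H=H_1(\pi;\kk)$ together with the Malcev-type decomposition of $\hat A$ to reduce the claim to a linear-algebra computation modulo the filtration, and then to use Lemma~\ref{inverting_matrices} to invert the relevant matrix. First I would fix a basis $h_1,\dots,h_n$ of $H$ and lift it to elements $g_1,\dots,g_n\in\hat\pi$ with $[g_i]=h_i$; set $\xi_i=\log(g_i)\in\mathcal P(\hat A)$, so that $\xi_i\equiv g_i-1\ (\modu\hat A_2)$ and the classes of the $\xi_i$ span $\hat A_1/\hat A_2\simeq H$. The hypothesis that $\rho$ is filtered means $\rho(\hat A_m,\hat A)\subset\hat A_{m-1}$, so $\rho$ restricts to the pairing $\rho^{(2)}$ on $\hat A_1/\hat A_2$, and by \eqref{eqq} this restriction is exactly the form $\mediumdott{\rho}$ on $H$, whose Gram matrix $G=(\,g_i\mediumdott{\rho}g_j\,)$ is invertible over $\kk$ by non-degeneracy. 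The strategy for existence is to build $\nabla_\rho$ term by term along the filtration: I look for $\nabla_\rho=\sum_i c_i\,\xi_i + (\text{higher filtration corrections})$ and solve recursively so that $\rho(a,\nabla_\rho)=a-\hat\aug(a)$ holds modulo $\hat A_m$ for every $m$, then take the projective limit.

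For the recursion, I would argue as follows. It suffices to produce, for each $m\ge1$, an element $\nabla_m\in\hat A_1$, well-defined modulo $\hat A_{m}$, such that $\rho(a,\nabla_m)\equiv a-\hat\aug(a)\ (\modu\hat A_{m})$ for all $a\in\hat A$, and such that the $\nabla_m$ are compatible. The base case $m=1$ is trivial since $\hat A_0=\hat A$ and $a-\hat\aug(a)\in\hat A_1\subset\hat A_1$, well, more precisely $\rho(a,\nabla_1)\in\hat A_0$ automatically; the first real step is $m=2$. There, since $\rho(a,\nabla)$ depends only on $a$ modulo $\hat A_2$ when $\nabla\in\hat A_1$ (filteredness gives $\rho(\hat A_2,\hat A_1)\subset\hat A_1$), I need $\rho^{(2)}(\bar a,\overline{\nabla})=\bar a$ in $\hat A_1/\hat A_2\simeq H$; writing $\overline\nabla=\sum_j c_j h_j$, this is the linear system $G^{\mathsf T}c=\mathrm{id}_H$, solvable because $G$ is invertible over $\kk$. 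For the inductive step from $m$ to $m+1$, suppose $\nabla_m$ works modulo $\hat A_m$; the error $E(a)=\rho(a,\nabla_m)-(a-\hat\aug(a))$ lies in $\hat A_{m}$ for $a\in\hat A$ and, by the right-Fox-derivative property \eqref{equ2} and left-Fox-derivative property \eqref{equ1}, is controlled by its value on $\hat A_1/\hat A_2$, i.e. it is determined by a $\kk$-linear map $H\to\hat A_{m}/\hat A_{m+1}$; adding to $\nabla_m$ a suitable element of $\hat A_{m}$ of the form $\sum_j(\text{correction}_j)\xi_j$ and using invertibility of $G$ once more kills this error modulo $\hat A_{m+1}$. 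Passing to the inverse limit over $m$ produces $\nabla_\rho\in\hat A_1$ with $\rho(a,\nabla_\rho)=a-\hat\aug(a)$ for all $a$. Here I expect Lemma~\ref{inverting_matrices} to enter: rather than doing the recursion by hand, one can phrase the equation $\rho(\xi_i,\nabla_\rho)=\xi_i$ (for $i=1,\dots,n$, which suffices since the $\xi_i$ generate $\hat A/\hat A_m$) as a matrix equation over $\hat A$ whose augmentation is $G$, invertible over $\kk$, hence the matrix is invertible over $\hat A$ and $\nabla_\rho$ is obtained directly.

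Uniqueness is the easy part: if $\nabla,\nabla'\in\hat A_1$ both satisfy $\rho(a,-)=a-\hat\aug(a)$, then $\rho(a,\nabla-\nabla')=0$ for all $a\in\hat A$; taking $a=g_i$ and reading modulo $\hat A_{m}$ for increasing $m$, together with the non-degeneracy of $G$ and the filteredness of $\rho$, forces $\nabla-\nabla'\in\hat A_m$ for all $m$, hence $\nabla=\nabla'$. Finally, injectivity of $\rho\mapsto\nabla_\rho$: from $\nabla_\rho$ one recovers $\rho$ on all of $\hat A\times\hat A$, because for fixed $b\in\hat A$ one has $b-\hat\aug(b)=\rho(b,\nabla_\rho)$, but more to the point, knowing that $\rho(a,\nabla_\rho)=a-\hat\aug(a)$ for all $a$ pins down $\rho(a,b)$ whenever $b$ lies in the $A$-submodule generated by $\nabla_\rho$ under right multiplication — and by non-degeneracy (the Gram matrix is invertible) the classes of $b$ and of a right-$A$-combination of $\nabla_\rho$ agree modulo $\hat A_2$, which by filteredness and induction along the filtration determines $\rho(a,b)$ completely. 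So if $\nabla_\rho=\nabla_{\rho'}$ then $\rho=\rho'$.

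The main obstacle I anticipate is the inductive step for existence: one must check carefully that the "error" $E$ at stage $m$ genuinely is detected by a $\kk$-linear functional on $H$ with values in $\hat A_m/\hat A_{m+1}$ — this uses both Fox-derivative identities and the fact that $\rho$ is filtered so that $\rho(\hat A_m,\hat A_1)\subset\hat A_{m-1}$ doesn't degrade things — and then that this functional can be cancelled by a correction term lying in the correct filtration layer, which is precisely where invertibility of $G$ is used uniformly at every level. Packaging this via Lemma~\ref{inverting_matrices} is the cleanest route and I would present it that way.
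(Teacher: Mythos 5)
Your overall strategy (choose lifts $g_1,\dots,g_n$ of a basis of $H$, reduce the defining equation to a matrix over $\hat A$ whose termwise augmentation is the Gram matrix, and invoke Lemma~\ref{inverting_matrices}) is the paper's strategy, and your uniqueness sketch is essentially correct. But the existence argument has a real gap: the shape of your Ansatz is wrong. You look for $\nabla_\rho=\sum_i c_i\xi_i+(\text{higher})$ and claim the first step is a solvable system ``$G^{\mathsf T}c=\mathrm{id}_H$'' in $\hat A_1/\hat A_2\simeq H$; but $\rho^{(2)}$ takes values in $A/I\simeq\kk$, not in $H$, and the degree-one condition actually reads $\sum_i c_i\,(g_j\mediumdott{\rho}g_i)=\hat\aug(g_j-1)=0$, which by invertibility of $G$ forces all $c_i=0$. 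The element $\nabla_\rho$ necessarily lies in $\hat A_2$, and the whole point is to seek it in the form $\nabla_\rho=\sum_{r,s}(g_r-1)\,c_{r,s}\,(g_s-1)$ with \emph{matrix} coefficients $c_{r,s}\in\hat A$ sandwiched between the two factors: only then do the two Fox identities \eqref{equ1}--\eqref{equ2} strip off the outer factors and convert $\rho(g_i-1,\nabla_\rho)=g_i-1$ into the matrix equation $BC=\mathrm{id}$ with $B=(\rho(g_r,g_s))$, which Lemma~\ref{inverting_matrices} solves. Your correction terms ``$\sum_j(\text{correction}_j)\xi_j$'' do not have this shape, and $\rho(g_i-1,c\,\xi_j)=\rho(g_i-1,c)\xi_j+\hat\aug(c)\rho(g_i-1,\xi_j)$ does not reduce to a matrix equation in the unknown $c$. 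You also use repeatedly, without proof, that the $g_i$ (or $\xi_i$) generate $\hat A/\hat A_m$ as an algebra; this needs the argument via a free group and the surjections $(I/I^2)^{\otimes N}\to I^N/I^{N+1}$.

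The injectivity argument is also broken. Since $\hat\aug(\nabla_\rho)=0$ and $\nabla_\rho\in\hat A_2$, the right $\hat A$-module generated by $\nabla_\rho$ is $\nabla_\rho\hat A\subset\hat A_2$, so a general $b\in\hat A_1$ does \emph{not} agree with a right-$A$-combination of $\nabla_\rho$ modulo $\hat A_2$ (nor, even for $b\in\hat A_2$, modulo $\hat A_3$: the image of $\nabla_\rho\hat A$ in $\hat A_2/\hat A_3$ is one-dimensional). No induction along the filtration repairs this, because the discrepancy never decreases in filtration degree. The correct mechanism is different: $\rho$ is entirely determined by the matrix $B=(\rho(g_r,g_s))$, since any $u,v$ expand as $u=\hat\aug(u)+\sum_r u_r(g_r-1)$ and $v=\hat\aug(v)+\sum_s(g_s-1)v_s$ modulo each $\hat A_m$, whence $\rho(u,v)=\sum_{r,s}u_r\,b_{r,s}\,v_s$; and $B$ is recoverable from $\nabla_\rho$ alone because for \emph{any} expansion $\nabla_\rho=\sum_{r,s}(g_r-1)c_{r,s}(g_s-1)$ one computes $\rho(g_p,g_q)=\rho(\rho(g_p,\nabla_\rho),g_q)=\sum_{r,s}\rho(g_p,g_r)c_{r,s}\rho(g_s,g_q)$, i.e.\ $B=BCB$, so $C=B^{-1}$. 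This recovery step is absent from your proposal and is what makes the map $\rho\mapsto\nabla_\rho$ injective.
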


\begin{proof} Pick elements $g_1,..., g_n$ of $\pi$ such that the homology classes $[g_1], ..., [g_n]\in H$ form a
basis of the $\kk$-module $H$. We claim that for each $m\geq 1$, the
set $(g_r)_r$ generates $A/I^m$ as an algebra. Indeed, consider a
free group $F$ of rank $n$ and   a group homomorphism $f: F\to \pi$
carrying a basis of $F$ into   $g_1,..., g_n$. Let $J$ be the
fundamental ideal of $\kk[F]$. Consider the commutative diagram
\begin{equation}\label{freegrouptopi}
\xymatrix{
J/J^2 \ar[d]_-{\simeq}  \ar[r]^{f_\ast} &
I/I^2 \ar[d]^{\simeq} \\
H_1(F;\kk) \ar[r]^{f_\ast} & H_1(\pi;\kk)
}
\end{equation}
where the vertical arrows are the canonical isomorphisms and the
horizontal arrows are induced by $f$. Since the bottom horizontal
arrow is surjective, so is the top horizontal arrow. Multiplication
in $I$ defines a surjection $(I/I^2)^{\otimes N}\to I^N/I^{N+1}$ for
all $N\geq 1$, and similarly for $J$. This implies that the
homomorphism $f_\ast:\kk[F]/J^m\to A/I^m$ is surjective. Hence,
$(g_r)_r$ generates  $A/I^m$ as an algebra.

To simplify the formulas, we shall use the same symbols $g_1,...,
g_n$ for the elements $\iota(g_1), ..., \iota(g_n)$ of $\hat A$. For
$r,s=1,...,n $, set $b_{r,s}= \rho ( g_r, g_s) \in \hat A$. A
termwise application of $\hat \aug $ to   the matrix
$B=(b_{r,s})_{r,s}$ yields the invertible matrix
$(g_r\mediumdott{\rho} g_s)_{r,s}$ over $\kk$. 
  By Lemma \ref{inverting_matrices},
 $B$ is invertible over $\hat A$. 
Let $B^{-1}= (c_{r,s})_{r,s}$ where $c_{r,s}\in \hat A$. Set
\begin{equation}\label{formulafordelta} \nabla= \sum_{r,s=1}^n  (g_r-1)\, c_{r,s} (g_s-1) \ \in \hat A_2
\subset \hat A_1.
\end{equation} We claim that $\nabla_\rho=\nabla$ satisfies the conditions
of the lemma. For any $i=1,..., n$,
\begin{eqnarray*}
\rho ( g_i -1, \nabla)&=& \sum_{r,s=1}^n \rho \big( g_i -1, (g_r-1) \, c_{r,s} (g_s-1)\big) \\
&=& \sum_{r,s=1}^n \rho ( g_i -1, g_r -1 ) c_{r,s} (g_s-1)\\
 & = &\sum_{r,s=1}^n b_{i,r} c_{r,s} (g_s-1)
 \ = \ \sum_{s=1}^n \delta_i^s (g_s-1)\ = \ g_i-1
\end{eqnarray*}
where $\delta_i^s$ is the Kronecker
delta. Since the restriction of $\rho$ to $\hat A_1 \times \hat A$
is $\hat A$-linear with respect to the first variable,   $\rho (a,
\nabla)= a $ for all $a \in  \sum_i
  \hat A (g_i-1)\subset \hat A$.   Therefore $\rho^{(m)} (a, \nabla)=
a $ for all $a$ in the left ideal of the algebra $\hat A/\hat
A_{m}\simeq A/I^{m}$ generated by   $(g_i-1)_i$. Clearly,
 $\rho^{(m)} (1, \nabla)=0 $. These computations and the  claim established
at the beginning of the proof imply that $\rho^{(m)} (a, \nabla)=
a-\hat \aug (a)$ for all $a \in   A/I^{m}$ and all $m\geq 1$. Since
$\rho=\underleftarrow{\lim} \,  \rho^{(m)} $, we have $\rho (a,
\nabla)= a-\hat \aug (a)$ for all $a\in \hat A$.

If there are two  $\nabla_\rho$'s satisfying the conditions of the
lemma, then their difference, $\delta \in \hat A_1$, satisfies $\rho
(\hat A, \delta)= 0$. Projecting $\delta$ to $\hat A/\hat
A_{m}\simeq A/I^{m}$, we can expand $\delta = \sum_{s=1}^n (g_s-1)
\delta_s$ where $\delta_s\in A/I^{m-1}$   for all $s$.   Then for all
$r=1,...,n $,
$$0=\rho^{(m)} (g_r, \delta)=  \sum_{s=1}^n b_{r,s}
\delta_s .$$ Since the matrix $B$ is invertible, we conclude that
$\delta_s=0$ for all $s$. Thus, $\delta$ projects to   $ 0\in
A/I^{m}$ for all $m$. Hence $\delta=0$.

 To prove the second claim of the lemma, observe that  for an arbitrary   expansion
\eqref{formulafordelta} of $\nabla=\nabla_\rho$ with  $c_{r,s}\in
\hat A$, the matrices $C=(c_{r,s})$ and $B=(\rho(g_r, g_s)) $ are
mutually inverse. Indeed, for any $p,q=1, \ldots, n$,
\begin{eqnarray*}
\rho(g_p, g_q)\ = \  \rho( g_p -1,  g_q ) \ = \  \rho (\rho (g_p,\nabla),  g_q )
&=&  \sum_{r,s} \rho \big(\rho (g_p,g_r )  c_{r,s}  (g_s-1), g_q\big) \\
& = & \sum_{r,s} \rho (g_p,g_r)  c_{r,s} \rho (g_s, g_q).
\end{eqnarray*}
Thus, $B=BCB$.    Since $B$ is invertible,   $B=C^{-1}$.

 Let now $\rho_1 $ and $\rho_2
$ be two non-degenerate  F-pairings in $\hat A$ such that
  $\nabla_{\rho_1}=\nabla_{\rho_2}$. By the  above, $ \rho_1(g_r, g_s)= \rho_2(g_r, g_s)$ for all $r,s$. Since  $(g_r)_r$
generates  $A/I^{m}$ as an algebra for all $m\geq 1$,  any $u,v\in
A/I^{m}$  can be expanded as
$$
u=\aug (u)+ \sum_{r=1}^n u_r (g_r-1) \quad {\text  {and}} \quad v=\aug (v)+ \sum_{s=1}^n    (g_s-1) v_s
$$
with $u_r, v_s\in A/I^{m-1}$.
Therefore we have the following equalities in $A/I^{m-1}$:
$$\rho^{(m)}_1  (u,v) = \sum_{r,s}   u_r  \rho^{(m)}_1(g_r,g_s)
  v_s = \sum_{r,s}    u_r   \rho^{(m)}_2(g_r,g_s)    v_s  = \rho^{(m)}_2 (u,v)  .$$
Thus,  $\rho_1^{(m)}  = \rho_2^{(m)}$ for all $m\geq 1$. Hence
 $ \rho_1=  \underleftarrow{\lim}\,
\rho_1^{(m)}=\underleftarrow{\lim}\,  \rho_2^{(m)}= \rho_2$.
\end{proof}

\begin{lemma}\label{le5++} Let $\rho $ be a non-degenerate  F-pairing in $\hat A$.
A filtered algebra automorphism $\varphi$ of $\hat A$
preserves~$\rho$ if and only if  $\varphi (\nabla_\rho)=
\nabla_\rho$.
\end{lemma}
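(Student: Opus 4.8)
The plan is to prove both implications separately, using the characterization of $\nabla_\rho$ from Lemma \ref{le5+} as the unique element of $\hat A_1$ with $\rho(a,\nabla_\rho)=a-\hat\aug(a)$ for all $a\in\hat A$.

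\textbf{The easy direction} ($\varphi$ preserves $\rho$ $\Rightarrow$ $\varphi(\nabla_\rho)=\nabla_\rho$). Apply $\varphi$ to the defining equation $\rho(a,\nabla_\rho)=a-\hat\aug(a)$. Since $\varphi$ preserves $\rho$, the left side becomes $\rho(\varphi(a),\varphi(\nabla_\rho))$; since $\varphi$ is a filtered algebra automorphism it commutes with $\hat\aug$ (this was observed in Section \ref{kd4}), so the right side becomes $\varphi(a)-\hat\aug(\varphi(a))$. As $a$ ranges over $\hat A$, so does $\varphi(a)$, so we get $\rho(b,\varphi(\nabla_\rho))=b-\hat\aug(b)$ for all $b\in\hat A$. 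One also needs $\varphi(\nabla_\rho)\in\hat A_1$, which holds because $\varphi$ is filtered. By the uniqueness clause of Lemma \ref{le5+}, $\varphi(\nabla_\rho)=\nabla_\rho$.

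\textbf{The harder direction} ($\varphi(\nabla_\rho)=\nabla_\rho$ $\Rightarrow$ $\varphi$ preserves $\rho$). Define a new pairing $\rho'(a,b)=\varphi^{-1}\big(\rho(\varphi(a),\varphi(b))\big)$. The point is to check that $\rho'$ is again a \emph{non-degenerate} F-pairing in $\hat A$ and that $\nabla_{\rho'}=\nabla_\rho$; then injectivity of $\rho\mapsto\nabla_\rho$ (the second claim of Lemma \ref{le5+}) forces $\rho'=\rho$, which is exactly the assertion that $\varphi$ preserves $\rho$. That $\rho'$ is an F-pairing follows formally: $\varphi$ is an algebra automorphism commuting with $\hat\aug$, so conjugating the product formulas \eqref{equ1}, \eqref{equ2} by $\varphi$ preserves them. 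That $\rho'$ is filtered follows because $\varphi$ and $\varphi^{-1}$ preserve the filtration while $\rho$ lowers degree by at most $2$. For non-degeneracy one observes that the induced homological form $\mediumdott{\rho'}$ on $H$ equals $\mediumdott{\rho}$ transported by the automorphism $\varphi_*$ of $H$, hence still has invertible Gram matrix. Finally, to compute $\nabla_{\rho'}$: for all $a\in\hat A$, $\rho'(a,\varphi^{-1}(\nabla_\rho))=\varphi^{-1}\big(\rho(\varphi(a),\nabla_\rho)\big)=\varphi^{-1}\big(\varphi(a)-\hat\aug(\varphi(a))\big)=a-\hat\aug(a)$, using the defining property of $\nabla_\rho$ and that $\varphi$ commutes with $\hat\aug$. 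Since $\varphi^{-1}(\nabla_\rho)\in\hat A_1$, uniqueness in Lemma \ref{le5+} gives $\nabla_{\rho'}=\varphi^{-1}(\nabla_\rho)$, and by hypothesis $\varphi(\nabla_\rho)=\nabla_\rho$, i.e.\ $\varphi^{-1}(\nabla_\rho)=\nabla_\rho$, so $\nabla_{\rho'}=\nabla_\rho$. Then injectivity concludes.

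\textbf{Main obstacle.} The only real content is verifying that $\rho'$ genuinely satisfies all three non-degeneracy hypotheses, in particular that $\mediumdott{\rho'}$ is still represented by an invertible matrix — this rests on identifying $\mediumdott{\rho'}$ with the pullback of $\mediumdott{\rho}$ along the filtered-level isomorphism $\varphi_*\colon H\to H$, using \eqref{eqq} and the fact that $\varphi$ induces an automorphism of $H\simeq\hat A_1/\hat A_2$. Once that bookkeeping is in place, everything else is a formal manipulation with the defining identity of $\nabla_\rho$ and the injectivity statement already proved in Lemma \ref{le5+}.
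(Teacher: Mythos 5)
Your proposal is correct and follows essentially the same route as the paper: the forward direction applies $\varphi$ to the defining identity of $\nabla_\rho$ and invokes uniqueness, and the backward direction introduces the conjugated pairing $\rho'(a,b)=\varphi^{-1}(\rho(\varphi(a),\varphi(b)))$, checks it is a non-degenerate filtered F-pairing via the induced automorphism of $\hat A_1/\hat A_2$, identifies $\nabla_{\rho'}$ with $\nabla_\rho$, and concludes by the injectivity of $\rho\mapsto\nabla_\rho$. The only cosmetic difference is that you first compute $\nabla_{\rho'}=\varphi^{-1}(\nabla_\rho)$ in general before using the hypothesis, whereas the paper substitutes $\varphi(\nabla_\rho)=\nabla_\rho$ directly.
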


\begin{proof} If $\varphi$
preserves~$\rho$, then for any $a\in \hat A$,
\begin{eqnarray*}
\rho (a, \varphi (\nabla_\rho)) & =  &
\rho (\varphi (\varphi^{-1}(a)), \varphi (\nabla_\rho))\\
& = &  \varphi \left ( \rho(\varphi^{-1}(a), \nabla_\rho)\right )\\
&=&  \varphi (\varphi^{-1}(a)-\hat \aug (\varphi^{-1}(a)))
\ = \ a -\hat \aug (a)
\end{eqnarray*}
where we use the fact that any
filtered algebra automorphism of $\hat A$ commutes with $\hat \aug$.
By the uniqueness in Lemma~\ref{le5+},  we have $\varphi
(\nabla_\rho)=\nabla_\rho$.

Suppose conversely that $\varphi (\nabla_\rho)=\nabla_\rho$  and
define a bilinear form $\rho':\hat A\times \hat A\to \hat A$  by
$\rho'(a,b)=\varphi^{-1} ( \rho (\varphi(a), \varphi (b)))$ for
$a,b\in \hat A$. The assumptions on $\varphi$ and $\rho$ imply that
$\rho'$ is a filtered F-pairing in $\hat A$. It is non-degenerate.
Indeed, the form $\mediumdott{\rho'}$ in $H_1(\pi;\kk)\simeq
I/I^2\simeq \hat A_1/\hat A_2$ is obtained from $\mediumdott{\rho}$
via the endomorphism of $\hat A_1/\hat A_2$ induced by $\varphi$.
Since $\varphi$ is filtered, this endomorphism is an automorphism,
and the non-degeneracy of $\rho$ implies  the non-degeneracy of
$\rho'$. For any $a\in \hat A$,
\begin{eqnarray*}
\rho'( a , \nabla_\rho) \ =  \
\varphi^{-1} ( \rho (\varphi(a), \varphi (\nabla_\rho)))
& = & \varphi^{-1} ( \rho (\varphi(a), \nabla_\rho))\\
&= & \varphi^{-1} (  \varphi(a)-\hat \aug(\varphi(a)) ) \ =  \ a- \hat \aug(a).
\end{eqnarray*}
Thus, $\nabla_\rho=\nabla_{\rho'}$. Now, Lemma \ref{le5+} implies
that $\rho=\rho'$ which means that the automorphism $\varphi$  preserves~$\rho$.
\end{proof}

 \begin{theor}\label{th1} Let $\rho $ be a non-degenerate  F-pairing in $\hat A$ such that  $\nabla_\rho=\nu-1$ for some $\nu \in  \hat \pi $. Then for all    $k\in \kk$ and
$\alpha\in \hat \pi$ with $\alpha\mediumdott{\rho} \alpha=0$, the
twist
 $t_{k,\alpha}$ of $ \hat A$   preserves both $\rho $ and~$ \nabla_\rho$.
\end{theor}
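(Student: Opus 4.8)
The plan is to reduce the statement, via Lemma \ref{le5++}, to the assertion that $t_{k,\alpha}$ fixes $\nabla_\rho$. Since $\rho$ is non-degenerate it is in particular filtered, and $t_{k,\alpha}$ is a filtered algebra automorphism of $\hat A$ (Section \ref{kd9}); hence Lemma \ref{le5++} applies and tells us that $t_{k,\alpha}$ preserves $\rho$ if and only if $t_{k,\alpha}(\nabla_\rho)=\nabla_\rho$. By hypothesis $\nabla_\rho=\nu-1$ with $\nu\in\hat\pi$, and $t_{k,\alpha}$ fixes $1$, so the whole theorem comes down to proving $t_{k,\alpha}(\nu)=\nu$. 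Writing $t_{k,\alpha}=\ee^{d}$ with $d=\hat\sigma^\rho(k\log^2(\alpha),-)$, which is a weakly nilpotent derivation by Lemma \ref{le5}, it suffices to show $d(\nu)=0$: then $d^n(\nu)=0$ for all $n\geq 1$ and $\ee^{d}(\nu)=\nu$.

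To compute $d(\nu)$ I would invoke Lemma \ref{sigma_formula}, which is available because $\rho$ is filtered: since $\alpha,\nu\in\hat\pi$,
\[
d(\nu)=\hat\sigma^\rho(k\log^2(\alpha),\nu)=2k\,\nu\,(\log\alpha)^{\rho(\alpha,\nu)} .
\]
Next I would identify the exponent $\rho(\alpha,\nu)$. By the defining property of $\nabla_\rho$ from Lemma \ref{le5+}, together with $\rho(\hat A,1)=0$ and $\hat\aug(\alpha)=1$,
\[
\rho(\alpha,\nu)=\rho(\alpha,\nu-1)=\rho(\alpha,\nabla_\rho)=\alpha-\hat\aug(\alpha)=\alpha-1 .
\]
Thus the problem is reduced to the single identity $(\log\alpha)^{\alpha-1}=0$.

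To establish this I would first record the Hopf-theoretic description of the operation $v\mapsto v^{u}$: for all $u,v\in\hat A$ one has $v^{u}=\sum_{(u)}S(u_{(1)})\,v\,u_{(2)}$, where $\hat\Delta u=\sum_{(u)}u_{(1)}\hat\otimes u_{(2)}$ and $S$ is the antipode. Indeed both sides are $\kk$-linear and filtration-preserving in $u$ (for the left side this is Lemma \ref{levuvuvuv}; for the right side it is a composite of the comultiplication, the antipode and multiplications, all of which preserve the filtration), and they agree on $u\in\iota(\pi)$, where $v^{\iota(g)}=\iota(g)^{-1}v\iota(g)=S(\iota(g))\,v\,\iota(g)$; since $\iota(\pi)$ generates $\hat A/\hat A_m$ as a $\kk$-module for every $m$, the two sides coincide. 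Applying this to the group-like element $u=\alpha$, for which $\hat\Delta\alpha=\alpha\hat\otimes\alpha$, gives $v^{\alpha}=\alpha^{-1}v\alpha$, and hence $v^{\alpha-1}=\alpha^{-1}v\alpha-v$ by linearity. Taking $v=\log\alpha$, which is a power series in $\alpha-1$ and therefore commutes with $\alpha$, we obtain $(\log\alpha)^{\alpha-1}=0$. Consequently $d(\nu)=0$, so $t_{k,\alpha}(\nu)=\nu$, hence $t_{k,\alpha}(\nabla_\rho)=\nu-1=\nabla_\rho$, and Lemma \ref{le5++} then gives that $t_{k,\alpha}$ preserves $\rho$ as well.

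The step I expect to be the main obstacle is the identity $v^{u}=\sum S(u_{(1)})v\,u_{(2)}$, equivalently $v^{\alpha}=\alpha^{-1}v\alpha$ for group-like $\alpha$: one has to make sure that the ``conjugation-by-the-support'' operation $v^{(\cdot)}$ genuinely behaves like honest conjugation on all of $\hat\pi$, not merely on the dense subset $\iota(\pi)$, and this is exactly where one uses the density of $\iota(\pi)$ in each $\hat A/\hat A_m$ together with the compatibility of $\hat\Delta$ and $S$ with the filtration. Everything else is formal manipulation with the complete Hopf algebra $\hat A$ and the lemmas already in place.
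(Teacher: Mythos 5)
Your proof is correct and follows essentially the same route as the paper: both reduce via Lemma \ref{le5++} to showing $t_{k,\alpha}(\nu)=\nu$, and both derive this from $\rho(\cdot,\nu)=\cdot-\hat\aug(\cdot)$ together with the triviality of the resulting conjugation-type operation. The only cosmetic difference is that the paper shows $\hat\sigma(g,\nu)=\nu(g^{-1}gg-g)=0$ for all group-like $g$ (hence $\hat\sigma(\hat A,\nu)=0$) directly from \eqref{condensed_sigma}, whereas you compute $d(\nu)=2k\nu(\log\alpha)^{\alpha-1}=0$ via Lemma \ref{sigma_formula}.
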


\begin{proof}  By  Lemma
   \ref{le5++}, it suffices to prove that
$t_{k,\alpha}$ preserves $\nabla=\nabla_\rho $. We shall prove a
more general statement: for all $a\in \hat A$ as in Lemma \ref{le5},
the filtered algebra automorphism $ \ee^{\hat \sigma (a, -) } $ of
$\hat A$ preserves $\nabla$.   For any $g\in \hat \pi$, we have
$\rho (g, \nu)=\rho (g, \nabla )= g-1$. Formula
\eqref{condensed_sigma} gives $\hat \sigma^\rho (g, \nu)=\nu
(g^{-1}gg-g)=0$. Since   $\hat \pi$ generates  $\hat A/ \hat
A_m\simeq A/I^m$ as a $\kk$-module for all $m$, we deduce that $\hat
\sigma^\rho (\hat A, \nu)=0$. Therefore $\hat \sigma^\rho (\hat A,
\nabla )=0$. Hence
   $\ee^{\hat \sigma (a, -) } (  \nabla  )= \nabla  $.
\end{proof}

 \subsection{The case of free $\pi$}\label{kd9+++}
 It is interesting to compute the image of the map   $\rho \mapsto \nabla_\rho$
 defined in Lemma~\ref{le5+}. We do it here in the case where $\pi$
 is a free  group of finite rank $n$.  Pick free generators
 $x_1,..., x_n$ of $\pi$.  We say that an element $a$ of $ \hat A  $ is {\it non-degenerate} if
 there  is an invertible $n \times n$ matrix $(a_{r,s})_{r,s}$ over $\kk$ such
 that
 $$a  -\hat \aug (a) \equiv   \sum_{r,s=1}^n  (x_r-1)\, a_{r,s} (x_s-1) \ (\modu \hat A_3 ).$$
It is easy to check that the non-degeneracy of $a$ does not depend
on the choice of the free generators $x_1,..., x_n$ of $\pi$.

\begin{lemma}\label{le5+++} Let $\pi$ be a finitely generated free group and $\nabla\in \hat A_1$. There is a non-degenerate  F-pairing $\rho$ in $\hat
A$  such that $\nabla_\rho=\nabla$   if and only if  $\nabla$ is
non-degenerate.
\end{lemma}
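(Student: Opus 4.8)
The statement asserts an equivalence. One direction is essentially trivial: if $\rho$ is a non-degenerate F-pairing with $\nabla_\rho = \nabla$, then $\nabla \in \hat A_2 \subset \hat A_1$, and the computation in the proof of Lemma~\ref{le5+} (see \eqref{formulafordelta}) exhibits $\nabla_\rho \equiv \sum_{r,s} (g_r-1)c_{r,s}(g_s-1) \pmod{\hat A_3}$ where $(c_{r,s})$ is inverse to the invertible matrix $(g_r \mediumdott{\rho} g_s)$; taking $g_r = x_r$ shows $\nabla$ is non-degenerate in the sense defined just above. So the content is the converse: given a non-degenerate $\nabla \in \hat A_1$, construct a non-degenerate F-pairing $\rho$ with $\nabla_\rho = \nabla$.

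\emph{Constructing $\rho$ from $\nabla$.} Since $\pi$ is free on $x_1,\dots,x_n$, the algebra $\hat A$ is the completed free associative algebra, and Remark~\ref{kd2--}.1 tells us every F-pairing is determined by an arbitrary matrix of values $\rho(x_i,x_j) \in \hat A$ via the left/right Fox derivatives $\partial_i, \partial^j$; the same formula $\rho(a,b) = \sum_{i,j}\partial_i(a)\,\rho_{i,j}\,\partial^j(b)$ makes sense over $\hat A$ (the Fox derivatives extend to $\hat A$ by continuity, decreasing filtration degree by exactly $1$, so the sum converges). Thus it suffices to produce a matrix $(\rho_{i,j})$ over $\hat A$ such that the resulting F-pairing $\rho$ is non-degenerate and satisfies $\rho(a,\nabla) = a - \hat\aug(a)$ for all $a$. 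By the argument in Lemma~\ref{le5+} (the "$B = BCB$" computation), once $\rho$ is non-degenerate the element $\nabla_\rho$ is characterized by: writing $\nabla = \sum_{r,s}(x_r-1)c_{r,s}(x_s-1) + (\text{higher-order correction})$, the matrix $(\rho(x_r,x_s))$ must be inverse to $(c_{r,s})$. The first would therefore be: extract from $\nabla$ the matrix $C = (c_{r,s})$ of its "leading quadratic part" — concretely $c_{r,s} = \partial^r \partial_{?}$-type coefficients; more cleanly, $c_{r,s}$ is obtained by applying the bilinear operation $(a) \mapsto$ "coefficient of $(x_r-1)(\cdots)(x_s-1)$" which one can realize as $\partial^{\,r}_{\mathrm{left}} \circ \partial^{\,s}_{\mathrm{right}}$ applied to $\nabla$. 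Since $\nabla$ is non-degenerate, the mod-$\hat A_3$ reduction $\hat\aug(C)$ of $C$ is an invertible matrix over $\kk$, hence by Lemma~\ref{inverting_matrices} $C$ itself is invertible over $\hat A$; set $B = C^{-1}$ and define $\rho$ by $\rho(x_i,x_j) = B_{i,j}$.

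\emph{Verifying the two required properties.} First, $\rho$ is non-degenerate: $\rho$ is filtered because each $\rho(x_i,x_j) = B_{i,j} = (C^{-1})_{i,j}$ has the property that $\hat\aug(B)$ is invertible (it equals $\hat\aug(C)^{-1}$), and the induced form $\mediumdott{\rho}$ on $H = \kk^n$ has matrix $\hat\aug(B)$, which is invertible; $H = H_1(\pi;\kk)$ is free of rank $n$. Second, one must check $\rho(a,\nabla) = a - \hat\aug(a)$. By the $\hat A$-linearity of $\rho$ in the first variable on $\hat A_1$ and the fact that the $x_i - 1$ generate $\hat A_1$ as a left ideal (shown in the proof of Lemma~\ref{le5+}), it suffices to verify $\rho(x_i - 1, \nabla) = x_i - 1$ for each $i$, and this can be done modulo $\hat A_m$ for every $m$ by induction on $m$: modulo $\hat A_3$ the computation is exactly $\sum_{r,s} B_{i,r} c_{r,s}(x_s-1) = x_i - 1$ by $BC = \mathrm{Id}$, and the higher-order "correction" terms in $\nabla$ must be shown not to contribute — here one uses that $\rho$ is determined by $B$ alone, so $\nabla$ and $\nabla_\rho$ have the same leading quadratic part, and then the \emph{uniqueness} clause of Lemma~\ref{le5+} forces any two candidates with the same leading part but differing by something in $\hat A_3$ to... actually this is not automatic, so the real work is here.

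\emph{The main obstacle.} The subtle point is that an arbitrary non-degenerate $\nabla \in \hat A_1$ may have higher-order terms (in $\hat A_3$ and beyond) that do \emph{not} match $\nabla_\rho$ for the $\rho$ built from the quadratic part alone. So the construction must be \emph{inductive}: one builds $\rho$ (equivalently the matrix $(\rho_{i,j})$) degree by degree, at each stage $m$ adjusting $\rho_{i,j}$ by an element of $\hat A_{m}$ so that $\rho^{(m+2)}(a,\nabla) = a - \hat\aug(a)$ holds in $A/I^{m+1}$. The key lemma to prove is that this adjustment is always possible: given that the identity holds mod $\hat A_{m+1}$, the "error" $\rho(x_i,\nabla) - (x_i - 1)$ lies in $\hat A_{m+1}$, and one must solve for a correction $\delta\rho_{i,j} \in \hat A_m$ with $\sum_j \delta\rho_{i,j}\cdot(\text{leading part of } \partial^j\nabla) = -(\text{error})$; this is solvable precisely because the leading matrix $\hat\aug(C)$ is invertible. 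Organizing this inductive construction cleanly — and checking the resulting projective limit is genuinely an F-pairing (not just a compatible family of forms mod $I^m$) — is the heart of the argument; the rest is bookkeeping with Fox derivatives and Lemma~\ref{inverting_matrices}.
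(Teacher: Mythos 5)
Your construction of $\rho$ is the correct one and is in fact the paper's, but your write-up then talks itself into a non-existent difficulty and leaves the proof incomplete: you defer ``the heart of the argument'' to an inductive, degree-by-degree correction of the matrix $(\rho_{i,j})$ which you never carry out. That induction is unnecessary, and the observation you are missing is this: non-degeneracy of $\nabla$ forces $\nabla\in\hat A_2$, and in the completed free algebra every element of $\hat A_2$ admits an \emph{exact} factorization $\nabla=\sum_{r,s}(x_r-1)\,c_{r,s}\,(x_s-1)$ with $c_{r,s}\in\hat A$ (for instance $c_{r,s}=\partial_s\partial^r(\nabla)$, using that the Fox derivatives lower the filtration degree by one). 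There is no ``higher-order correction'' left over: all terms of $\nabla$ of degree $\geq 3$ are absorbed into the entries $c_{r,s}$, which are full elements of $\hat A$ rather than scalars. With this exact expansion, $(\hat\aug(c_{r,s}))_{r,s}$ is the invertible scalar matrix appearing in the definition of non-degeneracy of $\nabla$, Lemma~\ref{inverting_matrices} inverts $C=(c_{r,s})$ over $\hat A$, and for $\rho(a,b)=\sum_{r,s}\partial_r(a)\,b_{r,s}\,\partial^s(b)$ with $B=(b_{r,s})=C^{-1}$ the computation from the proof of Lemma~\ref{le5+} applies verbatim: $\rho(x_i-1,\nabla)=\sum_{r,s}b_{i,r}\,c_{r,s}\,(x_s-1)=x_i-1$ holds \emph{exactly} in $\hat A$ because $BC=\mathrm{Id}$ exactly, and the left $\hat A$-linearity of $\rho$ on $\hat A_1\times\hat A$ (every element of $\hat A_1$ being of the form $\sum_i a_i(x_i-1)$) then yields $\rho(a,\nabla)=a-\hat\aug(a)$ for all $a$, i.e.\ $\nabla_\rho=\nabla$ by \eqref{formulafordelta}.

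In short, your ``main obstacle'' paragraph should be deleted and replaced by the single remark that the expansion of $\nabla$ is an exact identity in $\hat A$, not merely a congruence modulo $\hat A_3$; the ``$BC=\mathrm{Id}$ modulo $\hat A_3$ plus corrections'' picture is the wrong one. Everything else in your proposal --- the forward direction, the extension of the Fox derivatives $\partial_i,\partial^i$ to $\hat A$, and the verification that the resulting $\rho$ is a non-degenerate (in particular filtered) F-pairing --- is correct and matches the paper.
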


\begin{proof} It is clear from \eqref{formulafordelta} that $\nabla_\rho \in \hat
A_1$   is non-degenerate for any
 non-degenerate F-pairing $\rho$ in $\hat A$. To prove the converse, pick free generators
 $x_1,..., x_n$ of $\pi$. We  first extend to $\hat A$
 the   Fox derivatives in $A$  mentioned in  Remark \ref{kd2--}.1.  
 Using the interpretation of $\hat A$ in terms of formal power
 series mentioned in Section \ref{COCO1}, one   observes that
  any $a\in {\hat A}$ expands uniquely as
$\hat \aug (a)+\sum_i a_i (x_i-1)$ with $a_i\in \hat {A}$. We set
$\partial_i (a)=a_i$. Similarly,  any $a\in {\hat A}$ expands
uniquely as
  $\hat \aug (a)+\sum_i (x_i-1) a^i$ with $a^i\in \hat {A}$, and  we
set $\partial^i (a)=a^i$.

Suppose that $\nabla \in \hat A_1 $ is non-degenerate. We can expand
 $$\nabla=  \sum_{r,s=1}^n  (x_r-1)\, c_{r,s} (x_s-1)$$
 where $ c_{r,s}\in \hat A$ for all $r,s$. Though we shall not need it,
  note that $c_{r,s}= \partial^r \partial_s (\nabla)=\partial_s  \partial^r (\nabla)$. The non-degeneracy of $\nabla$ implies that
   the $n\times n$ matrix    $(\hat \aug ( c_{r,s}))$  is
 invertible over $\kk$. Hence the matrix $(c_{r,s})_{r,s}$   is
 invertible over $\hat A$. Let $  (b_{r,s})_{r,s}$ be the inverse
 matrix. Then the formula $$\rho (a, b)= \sum_{r,s=1}^n
\partial_r (a) b_{r,s} \partial^s (b)$$
defines a non-degenerate F-pairing $\rho$ in $\hat A$. By \eqref{formulafordelta}, we obtain that  $\nabla_\rho=\nabla$.
 \end{proof}

  Lemmas~\ref{le5+} and~\ref{le5+++} show that any non-degenerate   $\nabla \in \hat A_1$
determines a non-degenerate F-pairing $\rho=\rho_\nabla$ in $\hat
A$. Then, any pair $(k\in \kk$, a conjugacy class $\alpha$ in $\hat
\pi$ such that $\alpha\mediumdott{\rho} \alpha=0$) determines an
automorphism $t_{k, \alpha}$ of $\hat \pi$. For example, we can pick
a  non-degenerate $\nu\in \hat \pi \subset \hat A$ and apply these
constructions to $\nabla=\nu -1$. Theorem \ref{th1} ensures that all
the twists $t_{k, \alpha}$ preserve $\nu$ and $\rho=\rho_\nabla$.
Note as an additional bonus that the form $\mediumdott{\rho}$ in
$H_1(\pi;\kk)$ associated with
  $\nabla=\nu -1$ is skew-symmetric so that the twists $t_{k,
\alpha}$ are defined for all pairs $(k\in \kk$, a conjugacy class
$\alpha$ in $\hat \pi$). This gives a rich family of automorphisms
of $\hat \pi$ preserving $\nu$. In particular, we obtain that all
transvections in $H_1(\pi;\kk)$ determined by
 $\mediumdott{\rho}$   lift to group automorphisms
 of $\hat \pi$ preserving $\nu$ and $\rho$.

 \subsection{Remark} The  proof of   Lemma \ref{le5+} reproduces (in a more general setting) the arguments
  used in the   proof of   Lemma 2.11 in \cite{Tu}.

\section{The homotopy intersection form  of a surface}\label{loops}

  We discuss   the homotopy intersection form of a surface
\cite{Tu} and  the associated derived form.
 In this section, $\Sigma$ is a smooth connected oriented surface with non-empty boundary and a
base point $* \in \partial  \Sigma$. We provide $ \partial \Sigma$
with orientation induced by that of $\Sigma$. Set
 $
\pi = \pi_1(\Sigma,*)$ and $ A= \kk[\pi]$.

\subsection{Paths and loops}
 By paths and loops we shall mean piecewise-smooth paths and loops in
$\Sigma$. The product $\alpha \beta$ of two paths   $\alpha$ and
$\beta$ is obtained   by running first along $\alpha$ and then along
$\beta$. Given    two distinct simple (that is, multiplicity 1)
points $p,q$ on a path $\alpha$, we denote by $\alpha_{pq}$ the path
from $p$ to $q$   running along $\alpha$ in the positive direction.
 For a  simple point $p$ of a loop
$\alpha$ we denote by  $\alpha_{p}$ the loop $\alpha$ based at $p$.

We shall  use a second base point $\bigdot \in \partial \Sigma
\setminus \{*\}$ lying ``slightly before'' $*$ on
 $\partial \Sigma$. We   fix an  embedded path $\bord_{\bigdot \ast}$
running from  $\bigdot$ to $\ast$ along $\partial \Sigma$ in the
positive direction, and we denote the inverse path by $\drob_{\ast \bigdot}$.
The element of $\pi$ represented by a loop $\alpha$ based at $*$ is
denoted $[\alpha]$. We say that  a loop $\alpha$  based at $\bigdot$
\emph{represents}   $ [\drob_{\ast \bigdot} \alpha \bord_{\bigdot \ast}] \in \pi$.

\subsection{The homotopy intersection form $\eta$}
The \emph{homotopy intersection form} of $\Sigma$ is the $\kk$-bilinear map
$\eta: A \times A\to A$ defined, for any $a,b\in \pi$, by
\begin{equation}
\eta(a,b) = \sum_{p\in \alpha \cap \beta}
\varepsilon_p(\alpha,\beta)  \,
\left[\drob_{*\bigdot }\alpha_{\bigdot p} \beta_{p *}\right].
\end{equation}
Here,
$\alpha$ is a loop   based at $\bigdot$  and  representing $a$;  $\beta$ is a loop   based at $*$ and representing $b$.
We assume that $\alpha$ and $\beta$ meet transversely in a finite set  $ \alpha \cap \beta$ of simple points of $\alpha, \beta$.
Each crossing  $p\in \alpha \cap \beta$ has a sign $\varepsilon_p(\alpha,\beta)=\pm 1$
which is $+ 1$ if and only if  the frame   (the positive tangent vector of $\alpha$ at $p$,
the positive tangent vector of $\beta$ at $p$) is positively oriented.

It is easy to verify  that   $\eta$ is well-defined and   is an
F-pairing in the sense of Section \ref{kd2}. The associated
T-pairing $\lambda$, defined by $\lambda(a,b ) = \eta(a,b) b^{-1}$
for   $a,b\in \pi$, first appeared in \cite{Tu} where it was 
  used to characterize pairs of elements in
$\pi$ that can be represented by non-intersecting loops,
and to characterize (in the compact case) those automorphisms of
$\pi$ that arise from self-diffeomorphisms of~$\Sigma$. 
  The pairing $\lambda$  also  implicitly appeared in \cite{Pa} 
in connection with Reidemeister's equivariant intersection forms on closed surfaces.  
Applications of $\lambda$ to mapping class groups are   discussed in \cite{Pe}.

\begin{lemma}\label{omega}
The homological form $\mediumdott{\eta}: H_1(\pi;\kk) \times H_1(\pi;\kk) \to \kk$ induced by $\eta$
is the standard homological intersection pairing $\mediumdot$ in $ H_1(\Sigma;\kk)  $.
\end{lemma}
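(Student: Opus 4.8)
The plan is to unwind the definition of $\mediumdott{\eta}$ and compare it, class by class, with the homological intersection pairing. Recall that for $a,b \in \pi$ we have $a \mediumdott{\eta} b = \aug(\eta(a,b))$, and that the homological form on $H = H_1(\pi;\kk) = \kk \otimes_\ZZ \pi/[\pi,\pi]$ is the one induced by the restriction of $\mediumdott{\eta}$ to $\pi \times \pi$. So it suffices to show that for loops $\alpha$ (based at $\bigdot$, representing $a$) and $\beta$ (based at $\ast$, representing $b$) meeting transversely, the integer $\aug(\eta(a,b)) = \sum_{p \in \alpha \cap \beta} \varepsilon_p(\alpha,\beta)$ equals the homological intersection number $[\alpha] \mediumdot [\beta]$ in $H_1(\Sigma;\kk)$, where $[\alpha], [\beta]$ now denote homology classes.

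First I would record that applying $\aug$ to the defining formula for $\eta(a,b)$ simply forgets the group-algebra coefficients $[\drob_{\ast\bigdot}\alpha_{\bigdot p}\beta_{p\ast}] \in \pi$ and leaves the signed count $\sum_{p} \varepsilon_p(\alpha,\beta)$ of transverse intersection points of $\alpha$ and $\beta$. Up to the harmless reparametrization moving the basepoint of $\alpha$ from $\bigdot$ to $\ast$ along $\bord_{\bigdot\ast} \subset \partial\Sigma$ (which introduces no new interior intersections), this is exactly the classical definition of the algebraic intersection number of the $1$-cycles carried by $\alpha$ and $\beta$. The key classical fact to invoke is that this signed count depends only on the homology classes of the two cycles and computes the cup-product/intersection pairing $H_1(\Sigma;\ZZ) \times H_1(\Sigma;\ZZ) \to \ZZ$; this is standard (e.g.\ via Poincar\'e--Lefschetz duality on the surface with boundary, or via the transversality description of the intersection product). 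Extending scalars to $\kk \supset \QQ$ and using $H_1(\Sigma;\kk) = \kk \otimes_\ZZ H_1(\Sigma;\ZZ)$, together with the identification $H_1(\pi;\kk) \cong H_1(\Sigma;\kk)$ coming from $\Sigma \simeq K(\pi,1)$ (which holds since $\partial\Sigma \neq \emptyset$ forces $\Sigma$ to be homotopy equivalent to a wedge of circles), gives the claim on the level of homology classes.

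The remaining bookkeeping step is to check that the bilinear form $\mediumdott{\eta}$ as \emph{defined} in Section~\ref{kd03}, namely the form on $H$ induced from the additive-in-each-variable pairing $\pi \times \pi \to \kk$, really is computed by the signed intersection count of \emph{any} pair of transverse representing loops, and in particular is independent of the choices of $\alpha$ and $\beta$ within their homotopy (hence homology) classes. This follows because $\eta$ itself is well-defined (as already asserted after the definition of $\eta$), so $\aug \circ \eta$ is well-defined on $\pi \times \pi$; additivity in each variable was observed in Section~\ref{kd03} from the product formulas \eqref{equ1}--\eqref{equ2}; and additivity plus well-definedness is exactly what is needed to descend to $\pi/[\pi,\pi]$ and then to $H$.

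I expect the main (and essentially only) obstacle to be the careful matching of conventions: one must verify that the sign rule $\varepsilon_p(\alpha,\beta) = +1$ iff the frame (positive tangent of $\alpha$, positive tangent of $\beta$) is positively oriented agrees with the sign convention in the standard homological intersection pairing $\mediumdot$, and that the orientation of $\Sigma$ used to define $\varepsilon_p$ is the same one used to orient $H_1(\Sigma;\kk) \cong H_2(\Sigma,\partial\Sigma;\kk)^\ast$. Once the conventions are aligned, the identity $\aug(\eta(a,b)) = [\alpha]\mediumdot[\beta]$ is the classical geometric description of the intersection form, and nothing further is needed.
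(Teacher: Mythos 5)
Your proposal is correct and follows essentially the same route as the paper: apply $\aug$ to the defining formula for $\eta(a,b)$ to obtain the signed count $\sum_{p}\varepsilon_p(\alpha,\beta)$, and identify this with the classical homological intersection number. The paper's proof is exactly this one-line computation; your additional remarks on well-definedness and sign conventions are sound but not part of the argument the authors felt they needed to spell out.
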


\begin{proof}
 Clearly, $ H_1(\pi;\kk)\simeq H_1(\Sigma;\kk)$.
For any  loops $\alpha$,   $\beta$ as in the definition of $\eta$,
$$
[\alpha] \mediumdott{\eta} [\beta]= \aug(\eta([\alpha],[\beta])) =
\sum_{p\in \alpha \cap \beta}  \varepsilon_p(\alpha,\beta)  =[\alpha] \mediumdot [\beta].
$$

\vspace{-0.5cm}
\end{proof}

\begin{lemma}\label{intersection_sigma}
The (right) derived form $\sigma: A \times A \to A$ of   $\eta$
is given, for any $a,b\in \pi$, by
\begin{equation}\label{sigma}
\sigma(a,b) = \sum_{p\in \alpha \cap \beta}
\varepsilon_p(\alpha,\beta) \, \left[\beta_{* p} \alpha_{p} \beta_{p *}\right]
\end{equation}
where $\alpha$ is a  loop   representing the conjugacy class of $a$
and $\beta$ is a loop  based at $*$  and representing $b$ such that
$\alpha$, $\beta$ meet transversely at a finite set of simple
points.
\end{lemma}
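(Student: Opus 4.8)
The plan is to substitute the explicit formula for the homotopy intersection form $\eta$ into the definition of its right derived form and carry out the resulting elementary computation with paths. First I would fix representatives: a loop $\alpha$ based at $\bigdot$ with $[\drob_{*\bigdot}\,\alpha\,\bord_{\bigdot*}]=a$ and a loop $\beta$ based at $*$ with $[\beta]=b$, chosen in transverse position so that $\alpha\cap\beta$ is a finite set of simple points. It suffices to establish \eqref{sigma} for such a choice: the left-hand side $\sigma(a,b)$ is $\kk$-bilinear and, by Lemma~\ref{le2}, depends only on the conjugacy class of $a$, while the right-hand side is unchanged under free homotopy of $\alpha$ and based homotopy of $\beta$ (this last point being verified by the same list of elementary moves that shows $\eta$ is well-defined). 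Writing $\eta(a,b)=\sum_{p\in\alpha\cap\beta}\varepsilon_p(\alpha,\beta)\,c_p$ with $c_p=[\drob_{*\bigdot}\,\alpha_{\bigdot p}\,\beta_{p*}]\in\pi$, the definition of the derived form gives
$$
\sigma(a,b)\ =\ b\,a^{\eta(a,b)}\ =\ \sum_{p\in\alpha\cap\beta}\varepsilon_p(\alpha,\beta)\ b\,c_p^{-1}a\,c_p ,
$$
so the whole problem reduces to simplifying the single term $b\,c_p^{-1}a\,c_p$ attached to a crossing $p$.

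The core is a short path manipulation. Using $a=[\drob_{*\bigdot}\,\alpha\,\bord_{\bigdot*}]$ and the fact that $\bord_{\bigdot*}$ and $\drob_{*\bigdot}$ are mutually inverse, the arcs along $\partial\Sigma$ cancel in $c_p^{-1}a\,c_p$, leaving $[\,\beta_{p*}^{-1}\,\alpha_{\bigdot p}^{-1}\,\alpha\,\alpha_{\bigdot p}\,\beta_{p*}\,]$, where $\gamma^{-1}$ denotes the reversal of a path $\gamma$. Decomposing the loop $\alpha$ at $p$ as $\alpha\simeq\alpha_{\bigdot p}\alpha_{p\bigdot}$, the middle part collapses to $\alpha_{p\bigdot}\alpha_{\bigdot p}=\alpha_p$, so $c_p^{-1}a\,c_p=[\,\beta_{p*}^{-1}\,\alpha_p\,\beta_{p*}\,]$. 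Finally, multiplying on the left by $b=[\beta]$ and decomposing the loop $\beta$ at $p$ as $\beta\simeq\beta_{*p}\beta_{p*}$, one has $\beta\cdot\beta_{p*}^{-1}\simeq\beta_{*p}$, whence $b\,c_p^{-1}a\,c_p=[\,\beta\,\beta_{p*}^{-1}\,\alpha_p\,\beta_{p*}\,]=[\,\beta_{*p}\,\alpha_p\,\beta_{p*}\,]$. Summing over $p\in\alpha\cap\beta$ with the signs $\varepsilon_p(\alpha,\beta)$ gives \eqref{sigma}.

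What requires care is purely the bookkeeping with the two base points $\ast$ and $\bigdot$ and the auxiliary arcs $\bord_{\bigdot*}$, $\drob_{*\bigdot}$: one must keep track of exactly which conjugate of $a$ sits at each crossing. The one non-automatic step is the disappearance of the left factor $b$: a naive substitution produces $b\,[\beta_{p*}^{-1}\alpha_p\beta_{p*}]$, and it is precisely the identity $[\beta]\cdot\beta_{p*}^{-1}\simeq\beta_{*p}$ (reversing the tail of $\beta$ after going once around $\beta$ is homotopic to the head of $\beta$) that converts this into the clean term in \eqref{sigma}. As a consistency check, applying $\aug$ term by term recovers $\sum_{p}\varepsilon_p(\alpha,\beta)=[\alpha]\mediumdot[\beta]$, in agreement with $\aug(\sigma(a,b))=a\mediumdott{\eta}b$ and Lemma~\ref{omega}.
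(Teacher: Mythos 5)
Your proposal is correct and follows essentially the same route as the paper: reduce to the case where $\alpha$ is based at $\bigdot$ (using free-homotopy invariance of the right-hand side), substitute the explicit formula for $\eta(a,b)$ into $\sigma(a,b)=b\,a^{\eta(a,b)}$, and simplify each term by cancelling the boundary arcs, collapsing $\alpha_{\bigdot p}^{-1}\alpha\,\alpha_{\bigdot p}$ to $\alpha_p$, and using $\beta\,\beta_{p*}^{-1}\simeq\beta_{*p}$. The path manipulations you isolate are exactly the ones in the paper's chain of equalities.
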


\begin{proof}
Using standard local moves on loops generating the relation of
homotopy, it is easy to check that the right-hand side  of
(\ref{sigma})  is preserved under free homotopies of $\alpha$. We
can therefore assume that $\alpha$ is based at $\bigdot$ and
represents $a$. Then
\begin{eqnarray*}
\sigma(a,b)&=& b a^{\eta(a,b)}\\
& =&   \sum_{p\in \alpha \cap \beta}
\varepsilon_p(\alpha,\beta) \, b\,  \left[\drob_{*\bigdot } \alpha_{\bigdot p} \beta_{p*}\right]^{-1}
a \left[\drob_{*\bigdot } \alpha_{\bigdot p} \beta_{p*}\right]\\
& =&  \sum_{p\in \alpha \cap \beta}
\varepsilon_p(\alpha,\beta) \, \left[ \beta \  {(\beta^{-1})}_{* p} {(\alpha^{-1})}_{p \bigdot }   \bord_{\bigdot \ast} \
\drob_{*\bigdot} \alpha  \bord_{\bigdot \ast} \ \drob_{*\bigdot } \alpha_{\bigdot p} \beta_{p*}\right]\\
& =&  \sum_{p\in \alpha \cap \beta}
\varepsilon_p(\alpha,\beta) \,  \left[ \beta_{* p} {(\alpha^{-1})}_{p \bigdot }    \alpha  \alpha_{\bigdot p} \beta_{p*}\right]\\
& =&  \sum_{p\in \alpha \cap \beta}
\varepsilon_p(\alpha,\beta) \,  \left[ \beta_{* p}    \alpha_{p}  \beta_{p*}\right].
\end{eqnarray*}

\vspace{-0.5cm}
\end{proof}

The  form \eqref{sigma} was first introduced by Kawazumi and Kuno  \cite{KK}. They did not consider connections with the homotopy intersection form.

\subsection{Properties of   $\sigma$}

The properties of the derived forms of F-pairings obtained in
Section \ref{section0} fully apply to the form $\sigma=\sigma^\eta$.
In particular, $\sigma(a,-)$ is a derivation of $A$ and
$\sigma(cac^{-1},-)=\sigma(a,-)$ for all $a\in A$, $c\in \pi$. We
state two   additional properties of $\sigma$.

\begin{lemma}\label{sde-}
If  the conjugacy classes of $a,b\in \pi$ can be represented by
disjoint   loops in $\Sigma$, then the derivations $\sigma(a^m,-) $
and $\sigma(b^n,-) $ commute for all $m,n\geq 0$.
\end{lemma}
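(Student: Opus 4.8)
The plan is to reduce the statement to a geometric picture and then invoke the derivation structure established earlier. First I would fix disjoint loops $\alpha$ and $\beta$ representing the conjugacy classes of $a$ and $b$; since $\sigma(a,-)=\sigma(cac^{-1},-)$ and $\sigma(a^m,-)$ depends only on the conjugacy class of $a^m$ (which is represented by the loop $\alpha$ traversed $m$ times), I may freely choose these representatives. The key point is that because $\alpha$ and $\beta$ are disjoint, they may be taken to live in disjoint open subsurfaces, or more simply, one can use formula \eqref{sigma} of Lemma \ref{intersection_sigma}: computing $\sigma(a^m,-)$ and then $\sigma(b^n,-)$ (or in the other order) on any group element $c$ represented by a loop $\gamma$ reduces to summing over intersection points of $\gamma$ with $\alpha$ and with $\beta$, and the two sets of intersection points do not interact since $\alpha\cap\beta=\varnothing$.

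The cleanest route, I expect, is to verify the commutator $[\sigma(a^m,-),\sigma(b^n,-)]$ vanishes by a direct computation on a generating set. Since $\sigma(a^m,-)$ and $\sigma(b^n,-)$ are derivations of $A$ (Lemma \ref{le2-}), their commutator $[\sigma(a^m,-),\sigma(b^n,-)]=\sigma(a^m,-)\circ\sigma(b^n,-)-\sigma(b^n,-)\circ\sigma(a^m,-)$ is again a derivation of $A$; hence it suffices to check it vanishes on $\pi$, i.e.\ on loops $c$ based at $*$. So I would take a loop $\gamma$ representing $c\in\pi$ in general position with respect to both $\alpha$ and $\beta$, apply $\sigma(b^n,-)$ via \eqref{sigma} to get a sum over $p\in\gamma\cap\beta$ of terms of the form $[\gamma_{*p}\beta_p^n\gamma_{p*}]$, then apply $\sigma(a^m,-)$ to each such term. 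The latter requires Leibniz, expanding $\sigma(a^m, \gamma_{*p}\beta^n_p\gamma_{p*})$ over the three segments; the $\beta^n_p$ middle factor contributes nothing because $\alpha$ is disjoint from $\beta$, and the $\gamma$-pieces contribute sums over $\alpha\cap\gamma$. Doing the same in the opposite order produces the ``same'' double sum over $(\alpha\cap\gamma)\times(\beta\cap\gamma)$, and the two results cancel.

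The main obstacle is bookkeeping: when one reinserts the $\alpha$-intersection contributions into the $\beta$-intersection terms (and vice versa), one must check that the resulting based-loop elements and signs match exactly between the two orders of composition. This is where one must be careful about the order of points along $\gamma$ and along the composite loops $\gamma_{*p}\beta^n_p\gamma_{p*}$; an intersection point of $\alpha$ with $\gamma$ that lies on the sub-arc $\gamma_{*p}$ versus $\gamma_{p*}$ gives differently-looking conjugates, but the conjugacy-invariance of $\sigma$ in the first slot and the derivation identity reconcile them. I would organize this by writing, for fixed $p\in\gamma\cap\beta$ and $q\in\gamma\cap\alpha$, the single term contributed to each of the two double sums and checking they are literally equal (including sign $\varepsilon_p\varepsilon_q$); summing over all $(p,q)$ and taking the difference then gives $0$. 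Thus $[\sigma(a^m,-),\sigma(b^n,-)]=0$ on $\pi$, hence on all of $A$, which is the assertion; the cases $m=0$ or $n=0$ are trivial since $\sigma(1,-)=0$.

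Alternatively, and perhaps more conceptually, one can note that disjoint loops $\alpha,\beta$ can be pushed into disjoint handlebody-neighborhoods, so the conjugacy classes $a^m$ and $b^n$ lie in the images of $\pi_1$ of complementary subsurfaces; the derived form being ``local'' in the first variable, the derivations $\sigma(a^m,-)$ and $\sigma(b^n,-)$ act through disjoint parts of any loop and therefore commute. I would present the direct computation as the main argument since it keeps everything within the algebraic framework already set up, but mention the subsurface heuristic as motivation.
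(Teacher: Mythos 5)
Your proposal is correct and follows essentially the same route as the paper: the paper's proof consists of the single observation that powers of $a,b$ are again represented by disjoint loops, so that the claim "directly follows" from the geometric formula of Lemma \ref{intersection_sigma}, and your argument is precisely the term-by-term verification of the two double sums over $(\gamma\cap\alpha)\times(\gamma\cap\beta)$ that this one-line proof leaves implicit. The reduction to checking the commutator (itself a derivation) on $\pi$ and the case analysis according to whether $q$ precedes or follows $p$ on $\gamma$ are exactly the right bookkeeping.
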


\begin{proof}  If $a,b $ are represented by disjoint  loops, then so are the powers of $a,b$.  Therefore the claim of the lemma directly  follows from Lemma \ref{intersection_sigma}. \end{proof}

\begin{lemma}\label{sde}
For any $x\in I^2\subset A$ and  $b,c\in \pi$,
\begin{equation}\label{sigma_derives_eta}
\eta\big(\sigma(x,b),c\big) + \eta\big(b,\sigma(x,c)\big)= \sigma\big(x,\eta(b,c)\big).
\end{equation}
\end{lemma}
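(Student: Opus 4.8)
The plan is to prove the identity in the equivalent form $\Psi_x(b,c)=0$, where for $a,b,c\in A$ we set
$$\Psi_a(b,c)\ :=\ \eta\big(\sigma(a,b),c\big)+\eta\big(b,\sigma(a,c)\big)-\sigma\big(a,\eta(b,c)\big).$$
Note that $\Psi_a(b,c)$ is $\kk$-linear in each of its three arguments and that $\sigma(1,-)=0$. Since $I^2$ is $\kk$-spanned by the products $(g-1)(h-1)$ with $g,h\in\pi$, it suffices to check that $\Psi_x(b,c)=0$ when $x$ runs over these products; morally, this is the infinitesimal counterpart of the fact that generalized Dehn twists preserve $\eta$.

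The key step is the structural observation that, for $x\in I^2$, the bilinear map $(b,c)\mapsto\Psi_x(b,c)$ is again an F-pairing in $A$ in the sense of Section~\ref{kd2}. To see that it is a left Fox derivative in the first variable, expand $\Psi_a(b_1b_2,c)$ using the product formula \eqref{equ1} for $\eta$, the fact that $\sigma(a,-)$ is a derivation of $A$ (Lemma~\ref{le2-}), and the $\kk$-linearity of $\sigma(a,-)$; after collecting terms, everything assembles into $\Psi_a(b_1,c)\,\aug(b_2)+b_1\,\Psi_a(b_2,c)$ apart from one surviving term, $\aug\big(\sigma(a,b_2)\big)\,\eta(b_1,c)=\big(a\mediumdott{\eta}b_2\big)\,\eta(b_1,c)$. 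For $a=x\in I^2$ this correction vanishes: since $\eta(I^2,A)\subset I$ (the case $m=2$, $n=1$ of $\eta(I^m,I^n)\subset I^{m+n-2}$), we have $x\mediumdott{\eta}b_2=\aug(\eta(x,b_2))=0$. The symmetric computation with \eqref{equ2} shows that $(b,c)\mapsto\Psi_x(b,c)$ is a right Fox derivative in the second variable, the analogous correction $\big(x\mediumdott{\eta}c_1\big)\,\eta(b,c_2)$ being killed for the same reason. Hence $\Psi_x$ is an F-pairing whenever $x\in I^2$.

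To finish, I would use that $\partial\Sigma\neq\varnothing$, so $\pi=\pi_1(\Sigma,*)$ is free; fix free generators $(x_i)_i$. An F-pairing in the group algebra of a free group is determined by its values on pairs of free generators (cf.\ Remark~\ref{kd2--}.1), so it remains to verify that $\Psi_x(x_i,x_j)=0$ for all $i,j$ and all $x\in I^2$, hence — by $\kk$-linearity in $x$ — for $x=(g-1)(h-1)$. This last verification I would carry out geometrically: represent $g,h,x_i,x_j$ by loops in general position; then, by Lemma~\ref{intersection_sigma} and the definition of $\eta$, both $\eta\big(\sigma(x,x_i),x_j\big)+\eta\big(x_i,\sigma(x,x_j)\big)$ and $\sigma\big(x,\eta(x_i,x_j)\big)$ are finite signed sums indexed by intersection points, and the combination $\sigma(gh,-)-\sigma(g,-)-\sigma(h,-)$ dictated by $x\in I^2$ sets up a sign-preserving bijection between the two index sets. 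I expect this bookkeeping of sub-arcs and crossings to be the main obstacle; the hypothesis $x\in I^2$ is essential, since — as the computation above already shows — for a single group element $g$ the derivation $\sigma(g,-)$ does \emph{not} derive $\eta$ (there is a genuine correction term $\big(g\mediumdott{\eta}b_2\big)\,\eta(b_1,c)$), and only the $I^2$-combination makes it cancel.
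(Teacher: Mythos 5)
Your first step is correct and checks out: expanding $\Psi_a(b_1b_2,c)$ with \eqref{equ1} and the derivation property of $\sigma(a,-)$ does leave exactly the single correction term $\aug(\sigma(a,b_2))\,\eta(b_1,c)=(a\mediumdott{\eta}b_2)\,\eta(b_1,c)$, and this vanishes for $a=x\in I^2$ because $\eta(I^2,A)\subset I$; the symmetric computation in the second variable works the same way, so $\Psi_x$ is indeed an F-pairing for $x\in I^2$, hence determined by its values on pairs of free generators. The problem is that this reduction buys you essentially nothing: verifying $\Psi_x(x_i,x_j)=0$ for generators requires exactly the same intersection-point bookkeeping as verifying $\Psi_x(b,c)=0$ for arbitrary $b,c\in\pi$, and that bookkeeping --- which you explicitly defer as ``the main obstacle'' --- is the entire content of the lemma. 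As it stands, the core of the argument is announced but not executed, so there is a genuine gap.

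For comparison, the paper attacks the geometric identity head-on for a single $a\in\pi$ and arbitrary $b,c\in\pi$. It writes $\eta(\sigma(a,b),c)=X+Y+Z$ and $\eta(b,\sigma(a,c))=X'+Y'+Z'$ as explicit double sums over intersection points of three loops in general position, observes that $\sigma(a,\eta(b,c))=Z+Y'$ and $X=-X'$, and concludes that the defect $\Psi_a(b,c)$ equals a residue $R(a)=Y+Z'$ which is a sum over $r\in\beta\cap\gamma$ weighted by the \emph{homological} intersection number $[\alpha]\mediumdot[\beta_{r\bigdot}\bord_{\bigdot\ast}\gamma_{\ast r}]$. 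Since that weight is additive in $a$, the map $a\mapsto R(a)$ satisfies $R(aa')=R(a)+R(a')$, so $\Psi_x$ vanishes for $x=(a-1)(a'-1)$ and hence on $I^2$. Note in particular that your expected ``sign-preserving bijection between the two index sets'' does not exist for a single group element (the defect $R(a)$ is genuinely nonzero in general, consistent with the correction term you found algebraically); what actually happens is that the defect survives as a homological quantity that is additive in $a$ and is therefore killed by the $I^2$-combination. To complete your proof you would need to carry out this computation (or its restriction to generators), at which point your F-pairing reduction becomes superfluous.
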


\begin{proof} Pick any $a \in \pi$.
Let $\alpha$ be a   loop   representing the conjugacy class of $a$,
let $\beta$ be a loop based at $\bigdot$  and representing $b$,
  let $\gamma$ be a loop based at $*$  and representing $c$.
We   assume that these three loops meet transversely at simple
points. Then
$$
\eta(\sigma(a,b),c)
 = \sum_{p\in \alpha \cap \beta} \varepsilon_{p}(\alpha,\beta) \, \eta\left(
\left[\drob_{*\bigdot } \beta_{\bigdot p} \alpha_{p}
\beta_{p\bigdot} \bord_{\bigdot \ast}\right],c\right) =X+Y+Z$$
where we have set
\begin{eqnarray*}
X&=& \sum_{p\in \alpha \cap \beta} \sum_{q\in \alpha \cap \gamma}
\varepsilon_p(\alpha,\beta) \varepsilon_q(\alpha,\gamma) \,
\left[\drob_{*\bigdot } \beta_{\bigdot p} \alpha_{pq} \gamma_{q*}\right],\\
Y&=&  \sum_{p\in \alpha \cap \beta} \sum_{\substack{q\in \beta \cap \gamma\\ q <p \operatorname{on }\beta}}
\varepsilon_p(\alpha,\beta) \varepsilon_q(\beta,\gamma) \,
\left[ \drob_{* \bigdot} \beta_{\bigdot q} \gamma_{q*}\right],\\
Z&=&
  \sum_{p\in \alpha \cap \beta} \sum_{\substack{q\in \beta \cap \gamma\\ q > p \operatorname{on }\beta}}
\varepsilon_p(\alpha,\beta) \varepsilon_q(\beta,\gamma) \,
\left[ \drob_{* \bigdot} \beta_{\bigdot p} \alpha_{p} \beta_{pq} \gamma_{q*}\right].
\end{eqnarray*}
Here we write $q <p$ (or $p>q$) on $  \beta$ whenever $\beta$ passes first through the point $q$ and then through $p$. Similarly,
$$
\eta(b,\sigma(a,c))
 =  \sum_{q\in \alpha \cap \gamma} \varepsilon_{q}(\alpha,\gamma) \,
\eta\left(b,[\gamma_{*q}\alpha_{q} \gamma_{q*}]\right)=X'+Y'+Z'$$
where we have set
\begin{eqnarray*}
X'&=&\sum_{q\in \alpha \cap \gamma} \sum_{p\in \beta\cap \alpha}
\varepsilon_{q}(\alpha,\gamma) \varepsilon_p(\beta,\alpha) \,
\left[ \drob_{* \bigdot} \beta_{\bigdot p} \alpha_{pq} \gamma_{q*}\right],\\
Y'&=&
  \sum_{q\in \alpha \cap \gamma}
\sum_{\substack{p\in \beta\cap \gamma\\ p<q \operatorname{ on }\gamma}}
\varepsilon_{q}(\alpha,\gamma)  \varepsilon_p(\beta,\gamma) \,
\left[ \drob_{* \bigdot} \beta_{\bigdot p} \gamma_{pq}\alpha_{q} \gamma_{q*}\right],\\
Z'&=& \sum_{q\in \alpha \cap \gamma}
\sum_{\substack{p\in \beta\cap \gamma\\ p>q \operatorname{ on }\gamma}}
\varepsilon_{q}(\alpha,\gamma)  \varepsilon_p(\beta,\gamma) \,
\left[ \drob_{*\bigdot } \beta_{\bigdot p} \gamma_{p *}\right].
\end{eqnarray*}
Moreover, we have
\begin{eqnarray*}
\sigma(a,\eta(b,c))&=&
\sum_{p\in \beta \cap \gamma}\varepsilon_p(\beta,\gamma) \,
\sigma\left(a,\left[ \drob_{* \bigdot} \beta_{\bigdot p} \gamma_{p*}\right]\right)\\
&=& \sum_{p\in \beta \cap \gamma} \sum_{\substack{q\in \alpha\cap \beta\\q<p \operatorname{ on }\beta}}
\varepsilon_p(\beta,\gamma) \varepsilon_q(\alpha,\beta)\,
\left[ \drob_{* \bigdot}\beta_{\bigdot q} \alpha_{q} \beta_{qp} \gamma_{p*}\right]\\
&&+  \sum_{p\in \beta \cap \gamma}
\sum_{\substack{q\in \alpha\cap \gamma\\q>p \operatorname{ on }\gamma}}
\varepsilon_p(\beta,\gamma) \varepsilon_q(\alpha,\gamma)\,
\left[ \drob_{* \bigdot} \beta_{\bigdot p} \gamma_{pq} \alpha_{q} \gamma_{q*}\right]=Z+Y'.
\end{eqnarray*}
Clearly, $X=-X'$.
Therefore
\begin{eqnarray*}
&& \eta(\sigma(a,b),c)+\eta(b,\sigma(a,c)) - \sigma(a,\eta(b,c)) \  = \ Y+Z'\\
&= &\sum_{r \in \beta\cap \gamma}
 {\bigg( \sum_{\substack{p\in \alpha \cap \beta\\ r <p \operatorname{on }\beta}}
\varepsilon_p(\alpha,\beta)
+ \sum_{\substack{q\in \alpha\cap \gamma\\ r>q \operatorname{ on }\gamma}}\varepsilon_{q}(\alpha,\gamma)
\bigg)}
\varepsilon_r(\beta,\gamma) \, \left[ \drob_{*\bigdot } \beta_{\bigdot r} \gamma_{r *}\right]\\
&= &\sum_{r \in \beta\cap \gamma}
 {([\alpha]\mediumdot[\beta_{r\bigdot}\bord_{\bigdot \ast}\gamma_{*r}])}\,
\varepsilon_r(\beta,\gamma) \,  \left[ \drob_{*\bigdot } \beta_{\bigdot r} \gamma_{r *}\right].
\end{eqnarray*}
Denoting the right-hand side by $R(a)$ we
 observe that
the resulting map $R:\pi \to A$ satisfies $R(a a')=R(a)+R(a')$ for all $a,a'\in \pi$.
We deduce that (\ref{sigma_derives_eta}) holds true for $x=(a-1)(a'-1)$.
The conclusion follows.
\end{proof}

\subsection{Remarks}\label{more_properties}
1.  It is easy to show that the F-pairing $\eta$ is   weakly skew-symmetric  
in the sense of Section \ref{Equivalence and transposition}. More precisely, for any $a,b\in A$,
$$\eta (a,b) +\overline \eta (a,b)= -(a-\aug (a))( b-\aug (b)).$$
(An equivalent formula  in terms of the associated T-pairing is stated in \cite{Tu}, Formula (5).) This implies that
 the left derived form $A \times A \to A$ of  $\eta$
is given by $(a,b) \mapsto -\sigma(b,a)$ for any $a,b\in A$.

2. By   Remark \ref{kd2--}.2,  the right derived form $\sigma$ of
$\eta$ induces a $\kk$-bilinear form $ q\sigma:  \kk[\check \pi]
\times \kk[\check \pi] \to \kk[\check \pi]$. Lemma
\ref{intersection_sigma} implies that   for any $a,b\in  \check \pi$
represented by transversal  free loops $\alpha$, $\beta$,  we have
$$
q\sigma(a,b) = \sum_{p\in \alpha \cap \beta} \varepsilon_p(\alpha,\beta) [\alpha_{p} \beta_{p} ].
$$
Thus, $q\sigma$ is    the Lie bracket in $\kk[\check \pi] $
introduced by Goldman   \cite{Go}. Note that   the map $\kk[\check
\pi] \to \Der(A)$ defined by $a \mapsto \sigma(a,-)$ is a Lie
algebra homomorphism from Goldman's Lie algebra to the Lie algebra
of derivations in $A$, see \cite{KK}.

\section{The extended mapping class group and the   twists}\label{emcg}

 We apply
the constructions and results of Sections \ref{twists} and
\ref{Hopf} to the homotopy intersection form of a surface. As above,
$\Sigma$ is   a smooth connected oriented surface with non-empty
boundary, $* \in \partial \Sigma$, $\pi=\pi_1(\Sigma, \ast)$,   and
$A=\kk[\pi]$.

\subsection{The extended mapping class group of
$\Sigma$}\label{emcgbeginning}  Recall the complete Hopf algebra
$\hat A= \underleftarrow{\lim} \, A/I^m$  where $I=\Ker \aug \subset
A$, and the group $\Aut(\hat A)$ consisting of H-automorphisms of $\hat A$ (see Section \ref{H-auto}).
Since $\pi $ is a free group,  the natural homomorphism $\iota:A\to \hat
A$ is injective, and we can view $A$ as a subalgebra of~$\hat A$.

By   Section \ref{COCO3},
  the homotopy intersection form $\eta$ in $A $ induces an
  F-pairing $\hat \eta:\hat A \times \hat A \to \hat A$.
     Let  $\widehat \mcg(\Sigma,*)\subset \Aut (\hat A) $ be
the group of H-automorphisms of $\hat A$ that preserve $\hat \eta$.
We call $\widehat \mcg(\Sigma,*)$   the \emph{extended mapping class
group} of $(\Sigma,*)$. Lemma \ref{lemma_cs} implies that all
elements of $\widehat \mcg(\Sigma,*)$ preserve the derived form
$\hat \sigma $ of $\hat \eta$.

We now relate $\widehat \mcg(\Sigma,*)$ to  the classical  mapping
class group  $\mcg(\Sigma,*)$  of $(\Sigma,*)$,   defined as the
group of isotopy classes of orientation-preserving
self-diffeomorphisms of $\Sigma$ that fix $*$.
 We emphasize that, in this definition, the self-diffeomorphisms of $\Sigma$
are not required  to be the identity on $\partial \Sigma$.
In particular, the Dehn twist about a closed 
curve parallel to a circle component of $\partial \Sigma \setminus \{*\}$
is trivial in $\mcg(\Sigma,*)$.

Any orientation-preserving diffeomorphism $f:(\Sigma,*) \to (\Sigma,*)$
induces an automorphism   of $\pi$, which
 extends by $\kk$-linearity to an automorphism $\zeta(f)$ of
$A=\kk[\pi]$. The latter preserves the $I$-filtration and extends
uniquely to an H-automorphism $\hat \zeta(f)$ of $\hat A$. The
automorphism $\hat \zeta(f)$  preserves $\hat \eta$ because
$\zeta(f) $ preserves~$\eta$. Thus the formula $f\mapsto   \hat \zeta
(f)$ defines a group homomorphism $\hat \zeta: \mcg(\Sigma,*) \to
\widehat \mcg(\Sigma,*)$.

\begin{theor}\label{mcg_into_emcg}
If all  components of $\partial \Sigma$ are circles, then   $\hat
\zeta$ is an injection.
\end{theor}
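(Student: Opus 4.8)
The plan is to reduce the injectivity of $\hat\zeta$ to the injectivity of the classical Dehn--Nielsen--Baer--type homomorphism $\mcg(\Sigma,*) \to \Aut(\pi)$, using the fact that $\pi$ is free and hence $\iota\colon A\to\hat A$ is injective. Concretely, suppose $f\in\mcg(\Sigma,*)$ lies in the kernel of $\hat\zeta$, i.e.\ $\hat\zeta(f)=\id_{\hat A}$. Since $\hat\zeta(f)$ is by construction the unique H-automorphism of $\hat A$ extending the automorphism $\zeta(f)$ of $A$, and since $A\subset\hat A$ via $\iota$, the equality $\hat\zeta(f)=\id_{\hat A}$ forces $\zeta(f)=\id_A$, hence $f_*=\id_\pi$ on $\pi=\pi_1(\Sigma,*)$. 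So the whole statement comes down to: \emph{if an orientation-preserving self-diffeomorphism $f$ of $\Sigma$ fixing $*$ acts trivially on $\pi_1(\Sigma,*)$, then $f$ is isotopic (through diffeomorphisms fixing $*$) to the identity.} This is where the hypothesis that all components of $\partial\Sigma$ are circles is used.

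First I would recall the Dehn--Nielsen--Baer theorem for surfaces with boundary: for a compact surface with boundary, the mapping class group (rel nothing, just orientation-preserving) injects into $\Out(\pi_1)$, or, with a basepoint on the boundary, into $\Aut(\pi_1)$. The classical reference is Zieschang--Vogt--Coldewey or Farb--Margalit; the point is that a homeomorphism inducing the identity on $\pi_1$ is isotopic to the identity. For a surface with \emph{punctures} rather than boundary circles this can fail (a diffeomorphism permuting punctures nontrivially but acting trivially on $\pi_1$ after a suitable choice... actually the subtler failure is that the peripheral structure is lost), which is exactly why the authors impose that $\partial\Sigma$ consists of circles and not, say, lines or a mix. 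When $\Sigma$ is non-compact I would first pass to a compact core: every connected surface with nonempty boundary consisting of circles deformation retracts onto a compact subsurface with the same fundamental group, and $f$ can be isotoped to preserve such a core; then apply the compact case.

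The key steps, in order: (1) observe $\hat\zeta(f)=\id_{\hat A}\Rightarrow\zeta(f)=\id_A\Rightarrow f_*=\id$ on $\pi_1(\Sigma,*)$, using injectivity of $\iota$ (valid as $\pi$ is free) and the uniqueness of the H-automorphism extending $\zeta(f)$; (2) reduce to the compact case via a compact core; (3) invoke the Dehn--Nielsen--Baer theorem with basepoint on the boundary to conclude $f\simeq\id_\Sigma$ rel $*$, so $f$ is trivial in $\mcg(\Sigma,*)$; (4) conclude $\ker\hat\zeta$ is trivial.

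\textbf{The main obstacle} I expect is step (3): one must be careful about what ``isotopy'' means in the definition of $\mcg(\Sigma,*)$ used here. The paper explicitly allows self-diffeomorphisms that do \emph{not} fix $\partial\Sigma$ pointwise (so that the boundary Dehn twists are trivial), and requires only that $*$ be fixed. The Dehn--Nielsen--Baer statement that maps naturally onto this: the map $\mcg(\Sigma,*)\to\Aut(\pi_1(\Sigma,*))$ is injective precisely because the peripheral subgroups (conjugacy classes of boundary loops) are algebraically detectable in $\pi_1$ when the ends are circles, and a $\pi_1$-trivial mapping class is therefore trivial. I would cite this rather than reprove it. A secondary technical point is making the compact-core reduction compatible with fixing the basepoint $*\in\partial\Sigma$ — one chooses the core to contain $*$ and meet each boundary circle in an arc, and isotopes $f$ to preserve it, which is standard but worth stating. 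No delicate new computation is needed; the content is packaging the classical rigidity theorem correctly against the algebraic setup already built in the paper.
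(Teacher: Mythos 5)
Your algebraic reduction is exactly the paper's: since $\pi$ is free, $\iota:A\to\hat A$ is injective, so $\hat\zeta(f)=\id_{\hat A}$ forces $f_*=\id$ on $\pi$, and everything comes down to the topological rigidity statement that a $\pi_1$-trivial diffeomorphism of $(\Sigma,*)$ is isotopic to the identity. Where you diverge is in how you supply that rigidity. You invoke the Dehn--Nielsen--Baer theorem for surfaces with boundary, preceded by a compact-core reduction for non-compact $\Sigma$. The paper instead splits the rigidity into two cited facts: $\Sigma$ is a $K(\pi,1)$ (its universal cover has trivial positive-degree homology), so $f_*=\id$ gives a homotopy $f\simeq\id_\Sigma$ rel $*$; and homotopic diffeomorphisms of $(\Sigma,*)$ are isotopic by Epstein (Theorem~6.4 of his 1966 paper), which applies directly to non-compact surfaces and so dispenses with the compact-core step you flag as a secondary technical point. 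The paper's packaging also locates the hypothesis on $\partial\Sigma$ precisely: it enters only in the homotopy-implies-isotopy step (the authors remark after the theorem that this is the sole obstruction to removing the hypothesis), whereas you attribute it to the detectability of the peripheral structure in the Dehn--Nielsen--Baer argument. Both routes are valid; the paper's is shorter because Epstein's theorem absorbs the non-compact case, while yours leans on a more familiar named theorem at the cost of an extra (standard but unproved) reduction.
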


\begin{proof}
Since $ \pi \subset A\subset  \hat A$, any $f\in \Ker \hat \zeta$
induces  the identity automorphism of $ \pi $. The surface $\Sigma$
is a $K(\pi,1)$-space since the homology of its universal covering
is zero in all positive degrees. Therefore $f$ is homotopic to
$\id_\Sigma$ relatively to $*$. It is known that homotopic
diffeomorphisms of $(\Sigma, \ast)$ are  isotopic, see, for
instance, \cite{Ep}, Theorem 6.4. We conclude that $f=1\in
\mcg(\Sigma,*)$.
\end{proof}

Theorem \ref{mcg_into_emcg} justifies   the term ``extended mapping
class group'' for $\widehat \mcg(\Sigma,*)$. The  assumption on
$\partial \Sigma$    probably can be  removed, the missing part is
the   equivalence of homotopy and isotopy without this assumption.
In the case where $\Sigma$ is compact and $\partial \Sigma \cong S^1$, the
group $\widehat \mcg(\Sigma,*)$ is closely related to  the
extensions of $ \mcg(\Sigma,*) $  introduced    in the study of
homology cobordisms of $\Sigma$ in terms of automorphisms of  the
pronilpotent completion (or  the Malcev completion) of $\pi$, see
\cite{GL,Ha,Mo}.

\subsection{Generalized Dehn twists} \label{gdt}

Each closed curve $C$ in $\Sigma$ determines an element  $c$ of $
\pi$   defined up to inversion and conjugation.   For   $k\in \kk$, 
the   \emph{generalized Dehn twist  along $C$ with parameter} $k$ is
\begin{equation}\label{def_gdt}
t_{k,C} = \ee^{\hat\sigma\left(k\log^2(c),-\right)}:\hat A\to \hat A.
\end{equation}
When $\Sigma$ is compact, $\partial \Sigma \cong S^1$,  and $k=1/2$,
\eqref{def_gdt} is equivalent to the definition of a generalized Dehn twist 
given by Kuno   \cite{Ku}; we  explain this in Section \ref{KK}.  

The results of Section  \ref{prop_twists} and
Theorem~\ref{twist_Hopf} imply that   $t_{k,C}$ is an H-automorphism
of $\hat A$   independent of the choice of $c $. By  definition,
$t_{k,C}$ is preserved under homotopies of the curve $C$ in $\Sigma$.

\begin{lemma}\label{twists_preserve_eta}
 $t_{k,C}\in \widehat \mcg(\Sigma,*)$ for all $k, C$.
\end{lemma}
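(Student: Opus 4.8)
Since $t_{k,C}$ has already been identified as an H-automorphism of $\hat A$ (Theorem \ref{twist_Hopf}), all that remains is to check that it preserves the completed homotopy intersection form $\hat\eta$. Write $c\in\pi$ for an element represented by $C$ and set $d=\hat\sigma\big(k\log^2(c),-\big):\hat A\to\hat A$, so that $t_{k,C}=\ee^{d}$; recall that $d$ is a weakly nilpotent derivation of $\hat A$ by Lemma \ref{le5}, using that $c\mediumdott{\eta}c=0$ because $\mediumdott{\eta}$ is the intersection pairing of $\Sigma$, which is alternating (Lemma \ref{omega}). The plan is: first, to show that $d$ is a \emph{derivation of $\hat\eta$}, meaning $\hat\eta(d(a),b)+\hat\eta(a,d(b))=d(\hat\eta(a,b))$ for all $a,b\in\hat A$; and second, to exponentiate this identity, in the manner of Lemma \ref{sigma_comultiplication+}, to obtain $\hat\eta\big(\ee^{d}(a),\ee^{d}(b)\big)=\ee^{d}\big(\hat\eta(a,b)\big)$.

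The first step is where the surface enters, through Lemma \ref{sde}: that lemma says precisely that $\sigma(x,-)$ is a derivation of $\eta$ whenever $x\in I^{2}$, and $\log^2(c)$ lies in $(c-1)^2+\hat A_3\subset\hat A_2$, the closure of $\iota(I^{2})$. To pass to the completion I would use that $\hat\eta$ and $\hat\sigma$ are continuous for the $\hat A_m$-adic topology, being the inverse limits of the forms $\eta^{(m)}$, $\sigma^{(m)}$ of Section \ref{COCO3}, each of which loses only one step of the filtration; concretely, both $(x,a,b)\mapsto\hat\eta(\hat\sigma(x,a),b)+\hat\eta(a,\hat\sigma(x,b))$ and $(x,a,b)\mapsto\hat\sigma(x,\hat\eta(a,b))$, computed modulo $\hat A_m$, depend only on $x,a,b$ modulo $\hat A_{m+2}$. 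Approximating $x$ by an element of $I^{2}$ and $a,b$ by elements of $A$ to that accuracy, and using the $\kk$-bilinearity of $\eta$ and $\sigma$ to reduce to $a,b\in\pi$, the required congruence modulo $\hat A_m$ becomes exactly Lemma \ref{sde}. Letting $m\to\infty$ gives the derivation identity on all of $\hat A$.

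For the second step I would argue as in the proof of Lemma \ref{sigma_comultiplication+}: from the derivation identity, an induction on $r$ gives $d^{r}\big(\hat\eta(a,b)\big)=\sum_{i=0}^{r}\binom{r}{i}\hat\eta\big(d^{i}a,d^{r-i}b\big)$; dividing by $r!$ and summing over $r\geq0$, which is legitimate modulo each $\hat A_m$ by weak nilpotency of $d$, yields $\hat\eta\big(\ee^{d}a,\ee^{d}b\big)=\ee^{d}\big(\hat\eta(a,b)\big)$. Hence $t_{k,C}=\ee^{d}$ preserves $\hat\eta$, and therefore $t_{k,C}\in\widehat\mcg(\Sigma,*)$ by definition of the extended mapping class group.

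The only genuinely nonformal point is the reduction, in the first step, of the completed derivation identity to the geometric Lemma \ref{sde}; once the filtration bookkeeping is written out, both exponentiation arguments are purely formal and parallel to those already carried out for the comultiplication in Section \ref{Hopf}.
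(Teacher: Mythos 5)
Your proof is correct and follows the same route as the paper's: verify that $d=\hat\sigma(k\log^2(c),-)$ is a derivation of $\hat\eta$ via Lemma \ref{sde}, then exponentiate formally using the binomial identity. The paper states the reduction to Lemma \ref{sde} without the filtration bookkeeping you spell out, but the argument is the same.
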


\begin{proof} We need only to prove that $ t_{k,C}$
preserves $\hat \eta$.  Consider an arbitrary  weakly nilpotent
$\kk$-linear homomorphism $d: \hat A \to \hat A$ such that for all
$a,b \in \hat A$,
\begin{equation}\label{exderivations}d(\hat \eta(a,b))= \hat
\eta(d(a),b)+ \hat \eta(a,d(b)).\end{equation}  Then
 $\ee^d$ preserves $\hat \eta$:  indeed we have for all $a,b\in \hat A$,
\begin{eqnarray*}
\ee^d \left(\hat \eta(a,b) \right)  & = &  \sum_{r\geq 0} \frac{d^{r}}{r!} \left(\hat \eta(a,b) \right)\\
& = & \sum_{r\geq 0} \frac{1}{r!}\sum_{i=0}^r {r \choose i}\hat\eta\left(d^i(a),d^{r-i}(b)\right)\\
&=& \sum_{i\geq 0}\sum_{j\geq 0} \frac{1}{i!j!} \hat\eta\left(d^i(a),d^j(b)\right) \ = \
\hat\eta\left(\ee^{d}(a),\ee^{d}(b)\right) .
\end{eqnarray*}
Lemma \ref{sde} shows that $d=\hat\sigma(k\log^2(c),-): \hat A \to \hat A$ satisfies the identity \eqref{exderivations}.
Hence,  $t_{k,C}=\ee^d$ preserves $\hat \eta$.
\end{proof}

We now show that for a {\emph {simple}} closed curve $C\subset
\Sigma$, the   automorphism $t_{1/2,C}$ of $\hat A$ is induced by
the classical Dehn twist  $T_C: \Sigma \to \Sigma$ about $C$.   Recall 
that  $T_C$ is a  diffeomorphism supported in a regular neighborhood
of $C$ and acting on an arc meeting $C$ transversely in one  point
as shown in Figure~\ref{Dehn_twist}. 

The following theorem generalizes a theorem of Kawazumi and Kuno \cite{KK} 
concerning the case of a compact surface bounded by a circle.  
In particular, Kawazumi and Kuno were the first to understand the role of the formal series $\log^2(x)/2$ in the  computation of
the action of $T_C$ on the fundamental group of such a surface.

\begin{figure}
\labellist \small \hair 2pt
\pinlabel {{\small $C$}} [r] at 45 155
\pinlabel {$\stackrel{T_{C}}{\longrightarrow}$}  at 418 155
\pinlabel { $\circlearrowleft$} at 152 285
\pinlabel {$\circlearrowleft$} at 678 285
\endlabellist
\centering
\includegraphics[scale=0.35]{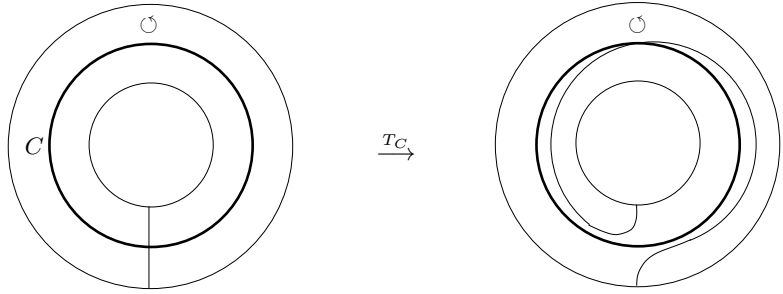}
\caption{The action of the Dehn twist $T_C$ in a regular neighborhood of a simple closed curve $C$.}
\label{Dehn_twist}
\end{figure}

\begin{theor}\label{KK_formula}
If $C \subset \Sigma$ is a simple closed curve, then
\begin{equation}\label{KKf}
 t_{1/2,C}= \hat \zeta(T_C) .
\end{equation}
\end{theor}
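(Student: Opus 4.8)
The plan is to compare the two H-automorphisms $t_{1/2,C}$ and $\hat\zeta(T_C)$ by checking that they agree on the generators of $\pi$ coming from a suitable topological decomposition of $\Sigma$ near $C$. Since both sides are H-automorphisms of $\hat A$ and $\iota(\pi)$ generates each $\hat A/\hat A_m$ as a $\kk$-module, it suffices to show that $t_{1/2,C}$ and $\hat\zeta(T_C)$ induce the same automorphism of $\hat\pi$, and for that it is enough to check equality on a generating set of $\pi$. First I would fix a regular neighborhood $N$ of $C$, an annulus, and express a general loop $\beta$ based at $*$ as a product of arcs which are either disjoint from $N$ (and hence fixed by $T_C$) or cross $N$ transversely; by the product formulas \eqref{equ1}, \eqref{equ2} and the multiplicativity of both automorphisms, the computation reduces to a single transverse crossing of $\beta$ with $C$.

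The heart of the matter is therefore a local computation. For a loop $\beta$ meeting $C$ transversely in a single point with sign $\varepsilon=\pm1$, the Dehn twist replaces $\beta$ by a loop that, after the crossing, inserts a copy of $C$ traversed $\varepsilon$ times; concretely, writing $\gamma$ for the based loop determined by $C$ and the arc of $\beta$ from $*$ to the crossing, one has $\hat\zeta(T_C)([\beta]) = [\beta_{*p}]\,[\gamma]^{\varepsilon}\,[\beta_{p*}]$ up to the usual care with base points (Figure~\ref{Dehn_twist}). On the other side, I would compute $t_{1/2,C}(b)=\ee^{\,\hat\sigma(\frac12\log^2(c),-)}(b)$ using Lemma \ref{sigma_formula}: since $C$ is simple, $\sigma(c,b)$ from Lemma \ref{intersection_sigma} has the single term $\varepsilon[\beta_{*p}\alpha_p\beta_{p*}]$, and more generally, because a simple curve does not cross itself, the derivation $d=\hat\sigma(\log^2(c),-)$ acts on $b$ in a way that is controlled: iterating $d$ only produces higher powers of the insertion $[\gamma]$ at the same crossing, because any further application of $d$ at a point of $\alpha\cap\beta$ distinct from $p$ contributes nothing new of the relevant type once we have localized near $C$. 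Summing the exponential series $\sum_k \frac{1}{k!}(\tfrac12 d)^k$ should then reproduce exactly $[\beta_{*p}]\exp(\tfrac12\,(\text{something}))[\beta_{p*}]$, and the appendix identities for $\log$ and $\exp$ (specifically that $\exp$ of the derivation built from $\tfrac12\log^2$ conjugates by $\exp(\log c)^{\varepsilon}=c^{\varepsilon}$) yield the insertion of $c^{\varepsilon}$, matching $\hat\zeta(T_C)$.

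The main obstacle I anticipate is the bookkeeping in this local exponentiation: one must verify carefully that when $C$ is simple the derivation $\hat\sigma(\log^2(c),-)$ genuinely acts as (twice) an inner-type operation $b\mapsto \beta_{*p}\log(c)^{\text{(power)}}\beta_{p*}$-type correction localized at the single crossing, with no cross-terms from other intersection points or from self-intersections of $\alpha$ — and this is precisely where simplicity of $C$ is used. Concretely, I would prove an auxiliary lemma: if $c$ is represented by a simple loop and $b$ by a loop crossing it once at $p$ with sign $\varepsilon$, then for the derivation $d=\hat\sigma(\log^2(c),-)$ one has $d^n(b)$ expressible so that $\ee^{d/2}(b)=[\beta_{*p}\,\gamma^\varepsilon\,\beta_{p*}]$; the power-series manipulation here is exactly the $\log^2/2 \leftrightarrow \exp$ interplay Kawazumi and Kuno identified. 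Once the single-crossing case is settled, extending to arbitrary $\beta$ by the product formula and then from $\beta\in\pi$ to all of $\hat A$ by continuity and the generation property is routine, and the equality \eqref{KKf} of H-automorphisms follows.
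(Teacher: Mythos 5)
Your overall strategy --- check the two H-automorphisms on a generating set of $\pi$ adapted to $C$, use the vanishing of the derivation on $\log(c)$ coming from the simplicity of $C$, and close the exponential series via the identity $\ee^{[\log(c),-]}=c(-)c^{-1}$ --- is indeed the strategy of the paper. But the reduction step you rely on has a genuine gap. You cannot reduce to ``a single transverse crossing of $\beta$ with $C$'': the pieces of $\beta$ between consecutive crossings are arcs, not loops, so they are not elements of $\pi$ to which the product formulas \eqref{equ1}, \eqref{equ2} or the multiplicativity of the automorphisms apply; and when $C$ is \emph{separating}, every loop based at $*$ meets $C$ in an even number of points, so the single-crossing case is vacuous and the relevant local model is conjugation by $c$ (two crossings), not insertion of $c^{\pm 1}$. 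Relatedly, your claim that iterating $d$ ``produces nothing new at points distinct from $p$'' is false for a loop with several crossings: $d(\beta)$ is a sum of terms each of which still contains the arcs $\beta_{*p}$, $\beta_{p*}$, and these arcs themselves cross $C$, so $d^2(\beta)$ acquires genuine cross-terms between distinct intersection points. The vanishing $d(\log(c))=0$ kills only the inserted factor, not the contributions of the surrounding arcs.

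The paper resolves exactly this difficulty by first reducing to compact $\Sigma$ (so that van Kampen applies cleanly) and then taking a pair-of-pants neighborhood $P$ of $\partial_*\cup\gamma\cup C$ and a generating set adapted to it, with two cases. For separating $C$ one gets generators $\Lambda\cup\Delta$ with $\eta(c,\lambda)=\lambda-1$ and $\eta(c,\delta)=0$, whence $d(\lambda)=[\log(c),\lambda]$ and $d(\log(c))=0$, so $\ee^{d}(\lambda)=c\lambda c^{-1}$; for non-separating $C$ one gets generators $\{c,\beta\}\cup\Theta$ with $\eta(c,\beta)=-1$, whence $d(\beta)=-\beta\log(c)$ and $\ee^{d}(\beta)=\beta c^{-1}$. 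Each generator is chosen so that $\eta(c,\cdot)$ is a single clean expression, which is what makes the iteration of $d$ computable; this is the content your ``routine'' extension step is missing. To repair your argument you would need to supply such a generating set (or an equivalent combinatorial control of the cross-terms), at which point you would essentially be reproducing the paper's proof.
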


\begin{proof}
  Our  proof  is inspired by the Kawazumi--Kuno    \cite{KK}  arguments 
though we avoid their use of  symplectic expansions. 
We first reduce   the theorem to the case of compact surfaces. Pick
any $\beta \in \pi$ and represent  $\beta$  by an immersed loop in
$\Sigma$ based at $*$. Pick  a compact connected  surface $\Sigma'
\subset \Sigma$ whose interior   contains this loop,   $  C$, and an
arc $\gamma$ connecting $\ast$ to a point of $C$. (We do not  assume
that $\partial \Sigma' \subset \partial \Sigma$.) Set $\pi'=
\pi_1(\Sigma',*)$, $A'= \kk[\pi']$, and let $\hat A'$ be the
fundamental  completion of $A'$. The inclusion $\Sigma'\subset
\Sigma$ induces an algebra homomorphism $i:\hat A' \to \hat A$. Let
$c'\in \pi'$ and $c\in \pi$ be represented by the loop $\gamma C
\gamma^{-1}$ where $C$ is oriented in an arbitrary way. It is clear
that the following  two diagrams commute:
$$
\xymatrix{
\hat A' \ar[rr]^-{\hat\zeta\left(T_C\right)}\ar[d]_-i && \hat A'\ar[d]^-i\\
\hat A \ar[rr]_-{\hat\zeta(T_C)} && \hat A,
}\quad \quad
\xymatrix{
\hat A' \ar[rr]^-{\ee^{\hat\sigma(\log^2(c')/2,-)}}\ar[d]_-i && \hat A'\ar[d]^-i\\
\hat A \ar[rr]_-{\ee^{\hat\sigma(\log^2(c)/2,-)}} & & \hat A.
}
$$
So, if   $\hat\zeta(T_C)=t_{1/2,C}$ in  $\Aut (\hat A')$, then
$\hat\zeta(T_C) = t_{1/2,C}  $ on  $  i(\hat A')\subset \hat A$.
Clearly,   $\beta\in  i(\hat A')$. Thus,  if the theorem holds for
all  compact surfaces, then   $\hat\zeta(T_C)=t_{1/2,C}$  on
$\pi\subset \Hat A$. It remains to observe that any two  filtered
automorphisms of $\hat A$ coinciding on $\pi $ are  equal. Indeed,
they   induce the same automorphism of  $\hat A/\hat A_m \simeq
A/I^m$ for all integer $m\geq 1$.

Assume     that $\Sigma$ is compact. Denote the component of
$\partial \Sigma$ containing the base point $\ast$ by $\partial_* $,
and  connect $*$ to $C$ by an embedded arc $\gamma\subset \Sigma $
meeting $C$ solely at the terminal  endpoint. A regular neighborhood
$P\subset \Sigma$ of $\partial_*  \cup \gamma \cup C$ is a pair of
pants, see Figure \ref{two_cases}.  The
  components of $\partial P$ distinct from  $\partial_*  $ are    $\alpha$ (which is parallel to $C$) and $\alpha'$.
Two situations may occur: either $\Sigma \setminus P$ is connected
($C$ is \emph{non-separating}) or $\Sigma \setminus P$ has two
connected components ($C$ is \emph{separating}). In both cases we
orient $C$ as shown in Figure \ref{two_cases}  and let ${c}=[\gamma
C \gamma^{-1}]\in \pi$.

\begin{figure}[!h]
    {\labellist \small \hair 0pt
    \pinlabel {$\partial_* $} [r] at 39 40
    \pinlabel {$\partial_* $} [r] at 428 40
    \pinlabel {\LARGE $*$}  at 130 7
    \pinlabel {\LARGE $*$}  at 521 3
    \pinlabel {$\gamma$} [tr] at 66 94
     \pinlabel {$\gamma$} [tr] at 452 94
    \pinlabel {$P$}  [l] at 252 131
    \pinlabel {$P$}  [l] at 648 127
    \pinlabel {$C$}  [r] at 31 173
    \pinlabel {$C$}  [r] at 425 173
    \pinlabel {$\alpha$}  [r] at 33 222
    \pinlabel {$\alpha$}  [r] at 426 219
    \pinlabel {$\beta$}  [t] at 535 130
    \pinlabel {$\alpha'$}  [r] at 152 220
    \pinlabel {$\alpha'$}  [r] at 545 216
    \pinlabel {$L$}  [b] at 59 437
    \pinlabel {$D$}  [b] at 206 435
    \pinlabel {\large $\circlearrowleft$} at 205 91
    \pinlabel {\large $\circlearrowleft$} at 595 91
    \endlabellist}
    \includegraphics[scale=0.45]{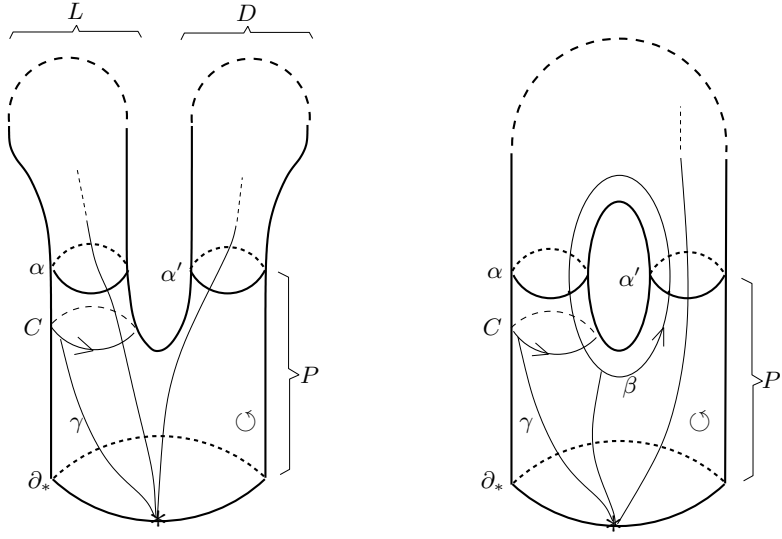}
    \caption{The separating case and the non-separating case.}
    \label{two_cases}
\end{figure}

We first consider the case of separating $C$. Let $L$ and $D$ be the
components of $\Sigma \setminus \inter(P)$ that contain $\alpha$ and
$\alpha'$ respectively. Applying the van Kampen  theorem, we can
find a basis $\Lambda \cup \Delta$ of $\pi$ such that each $\lambda
\in \Lambda$ (respectively $\delta \in \Delta$) is represented by a
loop    in $L$ (respectively, in $D$) transported to the base point
$*$ along an arc  in $\Sigma$ which meets $\alpha$ (respectively
$\alpha'$) transversely  in one point, see Figure \ref{two_cases}.
The action of $T_C$ on $\Lambda \cup \Delta$ is computed  by
\begin{equation}\label{separating_case}
T_C(\lambda) =  {c} \lambda {c}^{-1}   \quad {\text {for all}} \,\,\,    \lambda \in \Lambda \quad {\text {and}} \quad
T_C(\delta) =   \delta   \quad {\text {for all}} \,\,\,    \delta \in \Delta.
\end{equation}
We  now compute the derivation $d=\hat\sigma(\log^2({c})/2,-)$ of
the algebra $\hat A$.  Clearly, $\eta(c,\lambda) =  \lambda  - 1$
for all $\lambda \in \Lambda$. Formula \eqref{sigma_formula+} with
$k=1/2$ gives
$$
d(\lambda) = \lambda \left( \lambda^{-1} \log(c) \lambda - \log( c ) \right)
= \log(c) \lambda -  \lambda \log(c)  = [\log(c),\lambda].
$$
 Here the commutator $ab-ba$ of two elements of an algebra is
denoted by $[a,b]$.  Thus,   the derivations  $d$ and
$[\log({c}),-]$ of $\hat A$ are equal on  $\Lambda$. Since ${c}$
belongs to the subgroup of $\pi$ generated by $\Lambda$, we deduce
that
$$
 d(\log({c} ))= [\log({c}),\log({c})]=0 .
$$
This formula implies by   induction on $n\geq 1$ that for all
$\lambda \in \Lambda$,
$$
d^n(\lambda)= [\log({c}),-]^n(\lambda).
$$
 Therefore
$$
t_{1/2,C} (\lambda)=\ee^{d}(\lambda) =
\ \ee^{[\log({c}),-]}(\lambda) = \ee^{\log({c})}\, \lambda \, \ee^{-\log({c})} = {c} \lambda {c}^{-1}
$$
(cf.\ Appendix A).  Since    $C$ does not meet the loops
representing the elements of $\Delta$, the automorphism $t_{1/2,C}$
fixes   $ \Delta$ pointwise. Comparing with (\ref{separating_case})
we obtain  that $\hat\zeta(T_C)=t_{1/2,C} $ on $\Lambda \cup
\Delta$. Since $\Lambda \cup \Delta$  generates $\pi$, we have
$\hat\zeta(T_C)=t_{1/2,C} $.

If $C$ is   non-separating, then there is a simple closed curve in
$\Sigma$ meeting both $\alpha$ and $\alpha'$ transversely in one
point. We orient   this curve and   transport  it to  $*$ along an
embedded  arc in $P \setminus (C\cup \gamma)$ as in Figure
\ref{two_cases}. Let
   $\beta$  be the element  of $\pi=\pi_1(\Sigma, \ast)$
represented by the resulting loop.   By   the van Kampen theorem,
the group   $\pi$ is   generated by    $\{c,\beta\} \cup \Theta $
where $ \Theta \subset \pi $ is a set whose elements are represented
by loops in $\Sigma \setminus P$ transported to    $*$ along an
  arc in $\Sigma \setminus C$ meeting $\alpha'$ transversely
in one point, see Figure \ref{two_cases}. The action of $T_C$ on
$\{c,\beta\} \cup \Theta $ is computed by
\begin{equation}\label{non-separating_case}
 T_C(\theta) = \theta  \quad {\text {for all} }\,\,\,  \theta \in \{c\} \cup \Theta \quad {\text {and}}\quad
T_C(\beta) = \beta c^{-1}.
\end{equation}
We   now compute the derivation $d=\hat\sigma(\log^2(c)/2,-)$ of
$\hat A$ and the automorphism $t_{1/2, C}=\ee^{d}$ of $\hat A$. The
curve  $C$  does not meet the loops  representing the elements of
$\Theta$. Therefore  $d(  \Theta)=0$ and $t_{1/2, C} $ fixes  $
  \Theta$ pointwise. The curve  $C$ may be pushed into $\Sigma \setminus P$ and the resulting curve is disjoint from $\gamma C \gamma^{-1}$.
   Therefore  $d(
 c )=0$ and $t_{1/2, C} (c)=c$.  Since $\eta(c,\beta)=- 1$,
Formula \eqref{sigma_formula+} with $k=1/2$ gives
 $
d(\beta) = -  \beta\log(c)    $. The equality $d( \log(c))=0$
implies that $d^n(\beta) = \beta (-\log(c))^n$ for all $n\geq 0$.
Hence
$$
t_{1/2, C} (\beta) =\ee^{d} (\beta) =  \beta \ee^{-\log(c)} = \beta c^{-1}.
$$
Comparing with (\ref{non-separating_case}), we conclude that $\hat
\zeta(T_C)= t_{1/2, C}$
  on the generating set $ \{c,\beta\}\cup \Theta$ of $\pi$.
Hence  $\hat \zeta(T_C)= t_{1/2, C}$  on   $\hat A$.
\end{proof}

\subsection{Properties of $t_{k,C}$} \label{properties}
The properties of the  twists stated in Section \ref{prop_twists}
specialize to the topological setting and  imply the following
properties of $t_{k,C}$  for any closed curve $C$ in $\Sigma$.
 First of all, the family $\left( t_{k,C} \right)_{k\in \kk}$ is a one-parameter subgroup of $\widehat \mcg(\Sigma,*) $. This family is natural
  with respect to self-diffeomorphisms of $\Sigma$:  for  any $f\in \mcg(\Sigma,*)$, we have $t_{k,f(C)}=\hat \zeta(f) \, t_{k,C} \, \hat \zeta(f)^{-1}$. To proceed,  fix
 $k\in \kk$.
\begin{enumerate}
\item[(1)]   For all    $m\geq 0$,
we have $t_{k,C^m}= t_{km^2,C}=(t_{k,C})^{m^2}$ where $C^m$ is a
closed curve winding $m$ times around $C$.

\item[(2)] For all     $m\geq 1$,
the automorphism of $\hat A/\hat A_m \simeq  A/I^m$ induced by
$t_{k,C}$   depends only on   the conjugacy class  in
$\pi/\pi_{m} $ represented by $C$.

\item[(3)] If   $\beta\in \pi$ can be represented by a loop disjoint from $C$, then $t_{k,C}(\beta)=\beta$.
If $C'$ is a loop in $\Sigma$ disjoint from $C$ then $t_{k,C}$
commutes with $t_{k',C'}$ for all $k'\in \kk$. This follows from
Lemma \ref{sde-}.

\item[(4)] The automorphism   of $H= \hat A_1/\hat A_2\simeq H_1(\Sigma; \kk)$ induced by $t_{k,C}$
  is the transvection     $ h\mapsto  h+ 2k  ([C] \mediumdot h)
  [C]$,   $h\in H$.

\item[(5)]   Consider the quotient group
 $ \pi/\langle c\rangle$ where $c $ is an element  of $\pi$
  represented by $C$ and $\langle c\rangle$ is the normal
  subgroup of $\pi$ generated by $c$.   Let $K\subset \hat A$ be
  the kernel of the algebra homomorphism $\hat A\to \widehat
  {\kk[\pi/\langle c\rangle]}$ induced by the projection $\pi\to
  \pi/\langle c\rangle$.   Then $t_{k, C} (\beta) \in \beta K $
  for all $\beta \in \pi$. This follows from the   inclusions
  $\hat \sigma (c^n, \beta)\in \beta K $ for all $n\geq 1$.
\end{enumerate}

Note the following corollary concerning  the classical Dehn twist
about a  simple closed curve $C\subset \Sigma$: for any $m\geq 1$,
the action of $T_C$ on $\pi/\pi_m$    depends only on   the
conjugacy class  in $\pi/\pi_{m} $ represented by $C$.  This follows
from Theorem \ref{KK_formula}, Property (2) above, and the
injectivity of the   map  $\pi/\pi_m \to A/I^m$ induced by the
inclusion $\pi \subset A$.   This property of $T_C$ was first
established  by Kawazumi and Kuno  \cite{KK} for compact $\Sigma$
with $\partial \Sigma \cong S^1$.

Finally, we formulate the naturality of the twists with respect to
embeddings of surfaces. Suppose that  $\Sigma$ is   a subsurface of
a    surface $\Sigma^+$ and the orientation of $\Sigma$ extends to
$\Sigma^+$. Let $\gamma$ be  a path in $\Sigma^+ \setminus \overline
\Sigma$ leading from $\ast \in \partial \Sigma$ to a point $\ast^+
\in \partial \Sigma^+$. Transporting loops in $\Sigma$ along
$\gamma$ we obtain a  group homomorphism $ \pi=\pi_1(\Sigma,
\ast)\to \pi_1(\Sigma^+, \ast^+) =\pi^+$. Let $  \gamma_{\#}:\hat A
\to \hat A^+$ be the induced   algebra homomorphism  where $\hat
A^+$ is the fundamental completion of $A^+=\kk[\pi^+]$. Then for any
$k\in \kk$ and any closed curve $C$ in $\Sigma $, the following
diagram commutes: \begin{equation}\label{natural} \xymatrix  @!0
@R=1cm @C=2cm {
\hat A   \ar[d]_-{  \gamma_{\#}} \ar[r]^-{t_{k,C}} & \hat A \ar[d]^-{  \gamma_{\#} } \\
\hat A^+ \ar[r]_-{  t_{k,C}} & \hat A^+.
}
\end{equation}
Here to define $t_{k,C}:\hat A^+\to \hat A^+$ we view $C$ as a
closed curve in $\Sigma^+$ via the inclusion $\Sigma\subset
\Sigma^+$. The commutativity of this diagram follows from the
definitions and the equality $ \gamma_{\#}  \hat \sigma  = \hat
\sigma^+ (\gamma_{\#} \times \gamma_{\#} ):\hat A\times \hat A\to
\hat A^+$ where $\hat \sigma$ and $\hat\sigma^+$ are the completed
derived forms of the homotopy intersection forms in $\Sigma$ and
$\Sigma^+$, respectively. The equality in question follows  from
Lemma~\ref{intersection_sigma}.

\subsection{Example}
Let $\Sigma$ be  a $2$-punctured disk with base point $*\in \partial
\Sigma$ and let $C$ be the ``figure eight'' closed curve in $\Sigma$
as in Figure \ref{eight}.  We show that for any non-zero $k\in \kk$,
the twist $t_{k,C} \in \widehat \mcg(\Sigma,*)$ does not lie in  the
classical mapping class group $\mcg(\Sigma,*)$ or, more precisely,
does not lie in the image of the homomorphism $\hat \zeta:
\mcg(\Sigma,*) \to \widehat \mcg(\Sigma,*)$. A similar result was
obtained  by Kuno \cite{Ku} for a ``figure eight'' curve in a
compact surface with boundary $S^1$.

The group $\pi=\pi_1(\Sigma,*)$ is   free   on the generators
$\alpha$ and $\beta$
  shown in Figure~\ref{eight}. The group $\mcg(\Sigma,*)$ is an infinite cyclic group  generated by
the half-twist $\tau$ about the dashed arc   in Figure \ref{eight}.
This half-twist exchanges $\alpha$ and $\beta$ on  the homological
level.  On the other hand, the isomorphism induced by $t_{k,C}$ in
$\hat A_1/\hat A_2\simeq H_1(\Sigma; \kk)$ is the identity since the
homological intersection form of $\Sigma$ is trivial (here we use
assertion (4) of Section \ref{properties}). Consequently, $t_{k,C}$
is not an odd power of $\hat \zeta (\tau)$. To show that
$t_{k,C}\neq \hat \zeta (\tau^{2\ell}) $ for  $\ell\in \ZZ$, observe
that both endomorphisms $t_{k,C}$ and $ \hat \zeta (\tau^{2\ell})$
of $ \hat A$ act as the identity on $\hat A/\hat A_2$ and therefore
have well-defined logarithms in the class of endomorphisms of $ \hat
A$. It is enough to prove that $\log (t_{k,C}) \neq \log (\hat \zeta
(\tau^{2\ell}))$.

  The loop $C$, oriented as  in Figure \ref{eight} and transported to
$\ast$  along the arc $\gamma$   represents $c=\alpha\beta^{-1} \in
\pi$. We have $\eta(c,\alpha)= c-1+\alpha^2-c\alpha$ and
\eqref{sigma_formula+} gives
$$\log (t_{k,C}) (\alpha)=\hat\sigma(k\log^2(c),\alpha) = 2k (\alpha^{-1} \log( c)
\alpha-\log(c))\alpha= 2k [\log(\alpha^{-1} c \alpha),\alpha]
$$ Since $\alpha^{-1} c \alpha= \beta^{-1}   \alpha$,
we have   $\log (t_{k,C}) (\alpha) =2k [\log(\beta^{-1}
\alpha),\alpha]$. To compute the latter expression, we identify the
completion $\hat A$ of $A=\kk[\pi]$ with the algebra $\kk
\langle\langle a,b\rangle \rangle$ of formal power series in two
non-commuting variables $a,b$ so that $ \alpha=\ee^a$ and $
\beta=\ee^b$. By the Baker--Campbell--Hausdorff formula,
\begin{eqnarray*}
\log (t_{k,C}) (\alpha) &=&  2k \left[\log(\ee^{-b}\ee^{a}),\ee^a \right]
 \ = \ 2k \left[\log(\ee^{-b}\ee^{a}),\ee^{a} -1 \right]\\
&=& -2k \left[ b +  \frac{1}{2}[b,a] - \frac{1}{12}[b,[b,a]]+\frac{1}{12}[a,[a,b]],   a+ \frac{a^2}{2}+ \frac{a^3}{6}\right]+\cdots
\end{eqnarray*}
where the dots stand for the terms of total degree $\geq 5$ in $a,b$.

\begin{figure}
\labellist \small \hair 0pt
\pinlabel {\Large $*$} at 229 3
\pinlabel {$\circlearrowleft$} at 229 370
\pinlabel {$\gamma$} [r] at 258 90
\pinlabel {$\alpha$} [l] at 64 206
\pinlabel {$\beta$} [r] at 393 206
\pinlabel {$C$} [b] at 103 325
\endlabellist
\centering
\includegraphics[scale=0.35]{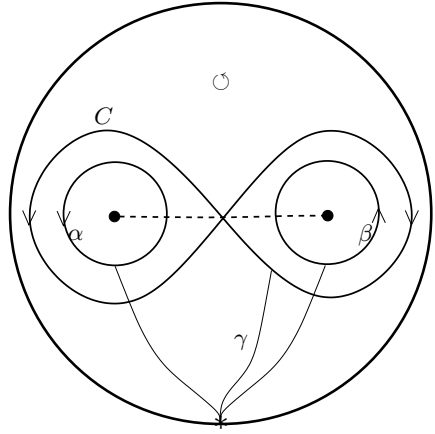}
\caption{The ``figure eight'' curve $C$ on a $2$-punctured disk.}
\label{eight}
\end{figure}

  Clearly, $\tau^{2 }  $ is the Dehn twist about a simple
closed curve in $\Int (\Sigma)$  parallel to $\partial \Sigma$.
Hence $\hat \zeta(\tau^{2 } )(x) = \nu x \nu^{-1}$ for all $x \in
\pi$, where $\nu=\beta\alpha$. Therefore $ \hat \zeta(\tau^{2
})=\ee^{ [\log(\nu), - ]} $, cf.\ Appendix A. (Note that $\log
(\nu)\in \hat A_1 $ so that the derivation $x\mapsto [\log(\nu), x
]$ of $\hat A$ is  weakly nilpotent and has a well-defined
exponential.) We have $ \hat \zeta(\tau^{2\ell})=(\hat
\zeta(\tau^{2}))^\ell=\ee^{\ell [\log(\nu), - ]}$ and
\begin{eqnarray}
\nonumber \log(\hat \zeta(\tau^{2\ell}))(\alpha) &=& \ell\,  [\log(\ee^{b}\ee^{a}), \ee^{a}  ]=\ell\,  [\log(\ee^{b}\ee^{a}), \ee^{a}-1 ]\\
\nonumber & =&   \ell \left[b+ \frac{1}{2}[b,a]+ \frac{1}{12}[b,[b,a]]+\frac{1}{12}[a,[a,b]],
a+ \frac{a^2}{2}+ \frac{a^3}{6} \right] + \cdots
\end{eqnarray}
Comparing with the expression  for  $ \log (t_{k,C}) (\alpha)$, we
deduce that $t_{k,C} \neq \hat \zeta(\tau^{2\ell})
 $.

\subsection{Remarks}

 1.   We have $ \widehat \mcg(\Sigma,*)  \simeq \underleftarrow{\lim}
\Aut_{\eta}(A/I^m) $ where for  every  $m\geq 1$,
$\Aut_{\eta}(A/I^m)$  is the group of Hopf  algebra automorphisms
$\varphi$ of $ A/I^m$
 such that $\varphi (I^k/I^m)=I^k/I^m$ for  $k=1,2,\dots, m$ and the following diagram  commutes:
$$
\xymatrix{
A/I^m \times A/I^m \ar[r]^-{\eta^{(m)}} \ar[d]_-{\varphi \times \varphi}
& A/I^{m-1}\ar[d]^-{\varphi  \ (\modu I^{m-1})} \\
A/I^m \times A/I^m \ar[r]^-{\eta^{(m)}}  & A/I^{m-1}.
}
$$
Here $\eta^{(m)}$ is  the pairing induced by $\eta$, cf.\ Section
\ref{COCO3}. When $\kk=\RR$ and  $\Sigma$ is compact, the group
$\Aut_{\eta}(A/I^m)$ is a finite-dimensional Lie group and $\widehat
\mcg(\Sigma,*)$ is a projective limit of such groups.

  2.     Fox pairings naturally arise in knot theory. Given a knot
$K\subset  S^3$, there is a canonical   weakly skew-symmetric   F-pairing in
the fundamental completion   of  $ \QQ[\pi]$, where $\pi $ is the
commutator subgroup of the knot group $\pi_1(S^3 \setminus K)$, see
\cite{Tu}, Appendix 3 or \cite{Tu1}, Theorem E. We can apply the
theory of twists introduced above in this setting. In this way,
every conjugacy class in $\pi$ or, more generally, in $\hat \pi$
determines a 1-parameter family of group automorphisms of $\hat
\pi$.

 3.   The definitions of the homotopy intersection form on surfaces
and of the generalized Dehn twists have their analogues for
non-orientable surfaces. We plan to discuss the non-orientable case
elsewhere.

\section{Twists on arbitrary oriented surfaces}\label{Twists on arbitrary oriented surfaces}

In this section we extend the definition of the generalized Dehn
twists to closed curves on an arbitrary connected oriented surface
$\Sigma$ (possibly with $\partial \Sigma=\emptyset$).

\subsection{The  group $\Out(\hat A)$}\label{mcgspecial}  The
    classical  mapping class group $\mcg(\Sigma)$
of $\Sigma$ is defined as the group of isotopy classes of
orientation-preserving  diffeomorphisms  $\Sigma \to \Sigma$. This
group is related to the group of outer automorphisms of the
fundamental group of $\Sigma$. Namely, pick   any  point  $\ast$ in
$\Sigma$ and set $\pi = \pi_1(\Sigma,*)$. Any self-diffeomorphism $f
$ of $\Sigma$ induces an automorphism $f_\#$ of $\pi $ carrying the
homotopy class $[\alpha]$ of a loop $\alpha$ based at $*$ to $[
\gamma (f\circ \alpha)\gamma^{-1}]$, where $\gamma$ is a  fixed path
from $*$ to $f(*)$. The indeterminacy in the choice of $\gamma$
results in the fact that $f_\#$ is well-defined only up to
conjugation by elements of $\pi$. In other words, $f_\#\in \Out(\pi)
= \Aut(\pi)/ \Inn(\pi)$. The formula $f\mapsto f_\#$ defines a
homomorphism $\zeta: \mcg(\Sigma)\to \Out(\pi)$. If all components
of $\partial \Sigma$ are circles, then $\zeta$ is injective, cf.\
Theorem~\ref{mcg_into_emcg}. If $\Sigma$ is a closed surface then
the image of $\zeta$ consists of all elements of $\Out(\pi)$ acting
as the identity in $H_2(\pi;\ZZ)\simeq \ZZ$.

Consider now the group algebra $A=\kk[\pi]$ and its fundamental
completion $\hat A= \underleftarrow{\lim} \, A/I^m $. An
automorphism   of   $\hat A$ is \emph{inner} if it is the
conjugation  by an element of $ \hat \pi \subset \hat A$. The inner
automorphisms of $\hat A$ form a normal subgroup $\Inn(\hat A)$ of
the group  $\Aut(\hat A)$ of H-automorphisms of $\hat A$. The
\emph{outer automorphism group} of $\hat A$ is the quotient  group
$$
\Out(\hat A) = \Aut(\hat A)/  \Inn(\hat A).
$$
Since the restriction map $\Aut(\hat A)\to \Aut(\hat \pi)$ carries $\Inn(\hat A)$ 
onto $ \Inn(\hat \pi)$  and   is injective (see Section \ref{H-auto}),  
it induces   an injection $\Out(\hat A) \hookrightarrow \Out(\hat \pi)$. To sum up, we have a
sequence of   homomorphisms
$$
\mcg(\Sigma) \stackrel{\zeta}{\longrightarrow}
\Out(\pi) \stackrel{\widehat{\cdot}}{\longrightarrow} \Out(\hat A)
\hookrightarrow \Out(\hat \pi)
$$
where  the middle map is defined through
the obvious extension of  automorphisms of $\pi$ to $\hat A$.
Though it is not important for the sequel, note our belief that this map  is  injective.
Thus we obtain a (presumably injective) group homomorphism $\hat \zeta: \mcg(\Sigma) \to \Out(\hat A)$.

The group $\Out(\hat A)$ does not depend on the choice of the base
point $\ast$   up to   canonical isomorphism. Indeed, any path
$\gamma:[0,1]\to \Sigma$ determines an isomorphism $\pi_1(\Sigma,
\gamma (0))\simeq \pi_1(\Sigma, \gamma (1))$ and an isomorphism of
the corresponding completed group algebras. The induced isomorphism
of the outer automorphism
  groups does not depend on   $\gamma$.

\subsection{Generalized Dehn twists} We define here   generalized Dehn twists for closed curves in
$\Sigma$.  Take a closed disk $D\subset \Sigma$ and provide the
surface $\Sigma_\circ= \Sigma \setminus \Int (D)$ with orientation
induced by that of $\Sigma$. Fix a base point $*  \in\partial
D\subset
\partial \Sigma_\circ$. The inclusion   homomorphism $\pi_\circ =\pi_1(\Sigma_\circ, \ast )
\to \pi_1(\Sigma, \ast)=\pi$  induces an algebra homomorphism $ i:
A_\circ=\kk[\pi_\circ] \to \kk[\pi]=A$ and an algebra homomorphism
$\hat i:\hat A_\circ \to \hat A $   of the fundamental completions.

\begin{lemma}\label{sigma_prime} Let $
\sigma_\circ:   A_\circ \times   A_\circ\to    A_\circ $ be the
derived  form of the homotopy intersection form   of
$\Sigma_\circ$, and let $\hat \sigma_\circ: \hat A_\circ \times \hat
A_\circ\to  \hat A_\circ $ be the  completion of $\sigma_\circ$.
There is a unique $\kk$-bilinear map $\hat \sigma: \hat A_\circ
\times \hat A \to \hat A $ such that the following diagram commutes:
$$
\xymatrix{
\hat A_\circ \times \hat A_\circ \ar[r]^-{\hat \sigma_\circ}\ar@{->>}[d]_-{\id \times \hat i} & \hat A_\circ\ar@{->>}[d]^-{\hat i}\\
\hat A_\circ \times \hat A  \ar[r]^-{\hat \sigma } & \hat A.
}
$$
\end{lemma}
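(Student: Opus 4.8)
The plan is to build $\hat\sigma$ level by level and pass to the projective limit, the whole matter reducing to a single algebraic fact about the ideal $K=\Ker(i\colon A_\circ\to A)$. Removing the open disk $D$ adds one boundary circle to $\Sigma$, so $i$ is induced by the surjection $\pi_\circ\to\pi$ whose kernel, by van Kampen, is the normal closure of the class $\zeta\in\pi_\circ$ of the new boundary component $\partial D$; hence $K$ is the two-sided ideal of $A_\circ$ generated by $\zeta-1$. Granting the claim that $\sigma_\circ(a,K)\subseteq K$ for every $a\in A_\circ$, the map $i\circ\sigma_\circ\colon A_\circ\times A_\circ\to A$ will factor uniquely through $\id\times i$ as a $\kk$-bilinear $\tau\colon A_\circ\times A\to A$, and completing $\tau$ will give the desired $\hat\sigma$. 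Uniqueness of $\hat\sigma$ is then automatic: $\hat i$ is surjective, so commutativity of the square pins down $\hat\sigma$ on all of $\hat A_\circ\times\hat i(\hat A_\circ)=\hat A_\circ\times\hat A$.

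The heart of the argument, and the one genuinely substantive point, is the inclusion $\sigma_\circ(a,K)\subseteq K$. By $\kk$-bilinearity it suffices to take $a\in\pi_\circ$. Since $\partial D$ is a boundary component of $\Sigma_\circ$, the conjugacy class of $a$ can be represented by a loop $\alpha$ lying in the interior of $\Sigma_\circ$, hence disjoint from $\partial D$. Applying the geometric formula of Lemma~\ref{intersection_sigma} with this $\alpha$ and with $\beta=\partial D$, oriented so as to be a loop based at $*$ representing $\zeta$, the intersection $\alpha\cap\beta$ is empty, so $\sigma_\circ(a,\zeta)=0$; combined with $\sigma_\circ(a,1)=0$ this gives $\sigma_\circ(a,\zeta-1)=0$. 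Now $\sigma_\circ(a,-)$ is a derivation of $A_\circ$ by Lemma~\ref{le2-}; applying it to a product $u(\zeta-1)v$ with $u,v\in A_\circ$ produces three terms, the outer two lying in the two-sided ideal $K$ and the middle one equal to $u\,\sigma_\circ(a,\zeta-1)\,v=0$. Hence $\sigma_\circ(a,K)\subseteq K$. This is exactly the step where the geometric description of $\sigma_\circ$ and the fact that $\partial D\subset\partial\Sigma_\circ$ are used; the rest is bookkeeping with the $I$-adic filtrations.

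It remains to feed the claim through the completions. Since $i$ is a surjective augmented $\kk$-algebra map, $i(I_\circ^m)=I^m$ for all $m$, and the claim shows that $i\circ\sigma_\circ$ descends to a unique $\kk$-bilinear $\tau\colon A_\circ\times A\to A$ with $\tau(a,i(b))=i(\sigma_\circ(a,b))$. From $\sigma_\circ(I_\circ^m,A_\circ)\subseteq I_\circ^{m-1}$ (Lemma~\ref{le3}) and $\sigma_\circ(A_\circ,I_\circ^m)\subseteq I_\circ^{m-1}$ (Lemma~\ref{le4} together with $\sigma_\circ(1,-)=0$) one gets $\tau(I_\circ^m,A)\subseteq I^{m-1}$ and $\tau(A_\circ,I^m)\subseteq I^{m-1}$, so $\tau$ induces for each $m\geq 1$ a form $\tau^{(m)}\colon A_\circ/I_\circ^m\times A/I^m\to A/I^{m-1}$. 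These are compatible with the canonical projections, and $\hat\sigma:=\underleftarrow{\lim}\,\tau^{(m)}\colon\hat A_\circ\times\hat A\to\hat A$ is the required map; unwinding the inverse limits shows $\hat i\circ\hat\sigma_\circ=\hat\sigma\circ(\id\times\hat i)$. Finally, $\hat i$ is surjective, by a Mittag--Leffler argument applied to the exact sequences $0\to(K+I_\circ^m)/I_\circ^m\to A_\circ/I_\circ^m\to A/I^m\to 0$ and to the tower $\{A_\circ/I_\circ^m\}$, which (as noted above) yields the uniqueness of $\hat\sigma$.
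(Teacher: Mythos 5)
Your proof is correct and follows essentially the same route as the paper: the key input in both is that $\sigma_\circ(a,[\partial D])=0$ by Lemma~\ref{intersection_sigma} (disjointness of loops), combined with the derivation property of $\sigma_\circ(a,-)$ to get the descent, followed by the standard filtration/projective-limit bookkeeping. The only cosmetic difference is that you phrase the descent via invariance of the ideal $K=\Ker(i)$ generated by $\zeta-1$, whereas the paper computes directly on group elements $bc\nu c^{-1}$ and invokes that the kernel of $\pi_\circ\to\pi$ is normally generated by $\nu$; these are interchangeable formulations of the same argument.
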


\begin{proof}
The uniqueness of $\hat\sigma$ is clear because the vertical arrows
are surjective. To prove the existence, set $\nu=[\partial D] \in
\pi_\circ$. It follows from Lemma \ref{intersection_sigma} that
$\sigma_\circ (a,\nu)=0$ for all $a\in \pi_\circ$. For any
$a,b,c\in\pi_\circ$,
\begin{eqnarray*}
\sigma_\circ(a,bc\nu c^{-1}) & = & \sigma_\circ(a,b)c\nu c^{-1} + b\sigma_\circ(a,c)\nu c^{-1} + 0 + bc\nu \sigma_\circ(a,c^{-1}) \\
&=&  \sigma_\circ(a,b)c\nu c^{-1} + b\sigma_\circ(a,c)\nu c^{-1} - bc\nu c^{-1} \sigma_\circ(a,c) c^{-1}.
\end{eqnarray*}
Hence $i\sigma_\circ(a,bc\nu c^{-1}) = i\sigma_\circ(a,b)$. Since
the inclusion homomorphism  $\pi_\circ \to \pi$ is surjective and   its
kernel is   normally generated by $\nu$, there is a unique
$\kk$-bilinear form $\sigma: A_\circ \times A  \to A $ such that  $i
\sigma_\circ = \sigma  (\id \times i): A_\circ \times A_\circ \to
A$. The form $\sigma$ inherits the properties of $\sigma_\circ$. In
particular, Lemma \ref{le4} implies that $\sigma
(I_\circ^m,I^n)\subset I^{m+n-2}$ for any integers $m,n\geq 1$ where
$I_\circ$ (resp.\ $I$) is the fundamental ideal of $A_\circ$ (resp.\
$A$). Thus, $\sigma$ induces a $\kk$-bilinear form $\hat \sigma:\hat
A_\circ \times \hat A  \to \hat A $. The latter satisfies the
conditions of the lemma.
\end{proof}

The next lemma directly follows from the definitions  and the previous lemma.

\begin{lemma}\label{sigma_prime+} For any $c\in \hat \pi_\circ$ and $k\in \kk$, the $\kk$-linear endomorphism $\hat\sigma (k\log^2(c),- )$
of $\hat A$ is a weakly nilpotent derivation of $\hat A$, and its
exponential $\ee^{\hat\sigma (k\log^2(c),- )}$ is an H-automorphism
of $\hat A$ making the following diagram   commutative:
$$
\xymatrix{
  \hat A_\circ \ar[rrr]^-{\ee^{\hat\sigma_\circ (k\log^2(c),- )}}\ar@{->>}[d]_-{  \hat i} &&& \hat A_\circ\ar@{->>}[d]^-{\hat i}\\
  \hat A  \ar[rrr]^-{\ee^{\hat\sigma (k\log^2(c),- )} } &&& \hat A.
}
$$\end{lemma}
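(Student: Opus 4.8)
Lemma~\ref{sigma_prime+} asks us to verify three things: that $d = \hat\sigma(k\log^2(c),-)$ is a weakly nilpotent derivation of $\hat A$, that $\ee^{d}$ is an H-automorphism, and that the square with $\ee^{\hat\sigma_\circ(k\log^2(c),-)}$ and the vertical maps $\hat i$ commutes.

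The plan is as follows. First I would invoke Lemma~\ref{sigma_prime} to see that $d = \hat\sigma(k\log^2(c),-)\colon \hat A \to \hat A$ is a well-defined $\kk$-linear map fitting into the commuting square $\hat i \circ \hat\sigma_\circ(k\log^2(c),-) = d \circ \hat i$; indeed $k\log^2(c) \in \hat A_\circ$, so we may plug it into the first slot of $\hat\sigma$. To see that $d$ is a \emph{derivation}: since $\Sigma_\circ$ has nonempty boundary, $\hat\sigma_\circ(a,-)$ is a derivation of $\hat A_\circ$ for every $a \in \hat A_\circ$ by the corresponding property established in Section~\ref{COCO4} (applied to the completion of $\eta$ for $\Sigma_\circ$); pushing forward along the surjective algebra map $\hat i$ and using $\hat i\,\hat\sigma_\circ(k\log^2(c),-) = d\,\hat i$ forces $d$ to be a derivation of $\hat A$ on the image of $\hat i$, which is all of $\hat A$ since $\hat i$ is surjective (the kernel of $\pi_\circ \to \pi$ is normally generated by $\nu = [\partial D]$). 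For weak nilpotence I would cite Lemma~\ref{le5}: $\log^2(c) \in (c-1)^2 + (\hat A_\circ)_3$ and $c \mediumdott{\hat\eta_\circ} c = 0$ because the homological form of a surface is skew-symmetric (Lemma~\ref{omega}); by naturality of $\hat\sigma$ under the surjection $\hat i$, or by a direct filtration argument as in Lemma~\ref{le5} transported to $\hat A$ through the filtration-preserving surjection $\hat i$, $d$ satisfies $d(\hat A_m) \subseteq \hat A_m$ and $d^{m+1}(\hat A_m) \subseteq \hat A_{m+1}$, hence is weakly nilpotent.

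Next, $\ee^{d}$ is a well-defined filtered algebra automorphism of $\hat A$ by the general exponentiation machinery of Section~\ref{kd4}, since $d$ is a weakly nilpotent derivation. That $\ee^{d}$ preserves comultiplication follows from Lemmas~\ref{sigma_comultiplication+} and~\ref{sigma_comultiplication}: one checks that $d$ is a coderivation. Here I would again use the surjective Hopf algebra map $\hat i\colon \hat A_\circ \to \hat A$ (it intertwines comultiplications and antipodes, being induced by a group homomorphism $\pi_\circ \to \pi$): since $\hat\sigma_\circ(k\log^2(c),-)$ is a coderivation of $\hat A_\circ$ by Lemma~\ref{sigma_comultiplication}, and $d \circ \hat i = \hat i \circ \hat\sigma_\circ(k\log^2(c),-)$ while $\hat\Delta \circ \hat i = (\hat i \hat\otimes \hat i)\circ \hat\Delta_\circ$, the coderivation identity for $\hat\sigma_\circ(k\log^2(c),-)$ transfers to $d$ on the image of $\hat i$, i.e.\ everywhere. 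Thus $\ee^{d}$ is an H-automorphism of $\hat A$.

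Finally, the commuting square in the statement: apply $\hat i$ to the exponential series termwise. Since $\hat i$ is a continuous algebra homomorphism and $d \circ \hat i = \hat i \circ \hat\sigma_\circ(k\log^2(c),-)$, we get $d^{\,r} \circ \hat i = \hat i \circ \hat\sigma_\circ(k\log^2(c),-)^{\,r}$ for every $r \geq 0$, hence $\ee^{d} \circ \hat i = \hat i \circ \ee^{\hat\sigma_\circ(k\log^2(c),-)}$, which is precisely the asserted commutativity. The main obstacle is bookkeeping rather than conceptual: one must be careful that the constant $k\log^2(c)$ genuinely lies in $\hat A_\circ$ and that all the properties of $\hat\sigma_\circ$ (derivation property, filtration estimates, coderivation property, weak nilpotence from Lemma~\ref{le5}) are legitimately available for the surface $\Sigma_\circ$ with boundary, and then that each of them descends along the surjection $\hat i$ — which works smoothly precisely because $\hat i$ is a surjective filtered Hopf algebra homomorphism compatible with $\hat\sigma$ via Lemma~\ref{sigma_prime}.
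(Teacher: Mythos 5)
Your proof is correct and fills in exactly the route the paper intends: the paper dismisses this lemma with ``directly follows from the definitions and the previous lemma,'' and your argument is precisely that — every needed property of $d=\hat\sigma(k\log^2(c),-)$ (derivation, weak nilpotence, coderivation, and the commuting square for the exponentials) is obtained for $\hat\sigma_\circ(k\log^2(c),-)$ on $\hat A_\circ$ from the general theory of Sections \ref{twists}--\ref{Hopf} and then transported along the surjective filtered Hopf algebra map $\hat i$ via the intertwining relation of Lemma~\ref{sigma_prime}. The one point worth being explicit about, which you implicitly use, is that $\hat i$ carries $(\hat A_\circ)_m$ \emph{onto} $\hat A_m$ (from surjectivity of $A_\circ/I_\circ^m\to A/I^m$), so that weak nilpotence genuinely descends.
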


Let now $C$ be a closed curve in $\Sigma$ and   $k\in \kk$. Pick any
$c\in \pi_\circ$   such that the conjugacy class of $i(c)\in \pi$ is
represented by   $C$ (with  some orientation). The \emph{generalized
Dehn twist about $C$ with parameter $k$} is the outer automorphism
$\tau_{k,C} \in \Out(\hat A)$   represented by the H-automorphism $
\ee^{\hat\sigma (k\log^2(c),- )}$ of $ \hat A$.

\begin{lemma}\label{mainlemmadehn}
 $\tau_{k,C}$    does not depend on the
intermediate choices in its definition.
\end{lemma}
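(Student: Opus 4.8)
The plan is to list the choices entering the definition of $\tau_{k,C}$ --- the disk $D$, the base point $*\in\partial D$, the orientation of $C$, and the element $c\in\pi_\circ$ --- and verify independence of each, working throughout in $\Out(\hat A)$. First dispose of the easy reductions. Reversing the orientation of $C$ replaces $c$ by $c^{-1}$, and since $\log(c^{-1})=-\log(c)$ in $\hat A_\circ$ we get $\log^2(c^{-1})=\log^2(c)$, so $\hat\sigma(k\log^2(c),-)$ and hence $\ee^{\hat\sigma(k\log^2(c),-)}$ are unchanged. If $c'$ is conjugate to $c$ in $\pi_\circ$, then $\log^2(c')$ is the corresponding conjugate of $\log^2(c)$, and the conjugation invariance of the derived form in the first variable (Lemma \ref{le2}, transported to $\hat A_\circ\times\hat A$ by Lemma \ref{sigma_prime}) gives $\hat\sigma(k\log^2(c'),-)=\hat\sigma(k\log^2(c),-)$. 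For two general valid lifts $c,c'$, pick $g\in\pi$ with $i(c')$ conjugate to $i(c)$ (the case of $i(c)^{-1}$ being handled by the orientation reduction), lift $g$ to $\tilde g\in\pi_\circ$, and replace $c$ by $\tilde g c\tilde g^{-1}$: this changes nothing and arranges $i(c')=i(c)$, so $c'=cn$ with $n\in N:=\ker(\pi_\circ\to\pi)$, the normal closure of $\nu=[\partial D]$.

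It remains to prove: for every $n\in N$, the H-automorphisms $D_1:=\ee^{\hat\sigma(k\log^2(cn),-)}$ and $D_2:=\ee^{\hat\sigma(k\log^2(c),-)}$ of $\hat A$ represent the same element of $\Out(\hat A)$. Since $cn\equiv c$ modulo $K:=\ker(\hat i\colon\hat A_\circ\to\hat A)$, the closed two-sided ideal generated by $\nu-1$, and $\log$ and squaring preserve this congruence, $\xi:=\log^2(cn)-\log^2(c)$ lies in $K$; hence the difference of derivations equals $k\,\hat\sigma(\xi,-)$, and it suffices to show $\hat\sigma(K,-)$ takes values in inner derivations of $\hat A$. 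Rewriting a generator $a(\nu-1)b$ of $K$ as $ba(\nu-1)$ by Lemma \ref{le2}, this reduces to $\hat\sigma(g(\nu-1),-)$ for $g\in\pi_\circ$, and the vanishing $\sigma_\circ(\nu,-)=0=\sigma_\circ(-,\nu)$ (Lemma \ref{intersection_sigma}, since $\nu$ is carried by the boundary circle $\partial D$, which may be isotoped off any given transverse loop) together with the product formulas for $\eta_\circ$ near $\partial D$ identify $\hat\sigma(g(\nu-1),-)$ with the inner derivation $[\hat i(g),-]$ up to a universal sign. Thus $k\,\hat\sigma(\xi,-)=[w,-]$ for some $w\in\hat A_1$. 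Finally, because the inner derivations form an ideal in the Lie algebra of weakly nilpotent derivations of $\hat A$ (as $[d,[w,-]]=[d(w),-]$ with $d(w)\in\hat A_1$) and the exponential sends inner derivations to conjugations by elements of $1+\hat A_1$, a direct argument --- e.g. computing the logarithmic derivative of $t\mapsto\ee^{t\hat\sigma(k\log^2(cn),-)}\ee^{-t\hat\sigma(k\log^2(c),-)}$ and checking it stays inner, or expanding by Baker--Campbell--Hausdorff --- shows $D_1D_2^{-1}$ is conjugation by some $u\in 1+\hat A_1$; being a composite of H-automorphisms it preserves comultiplication, forcing $u$ to be group-like, hence $u\in\hat\pi$ and $D_1D_2^{-1}\in\Inn(\hat A)$.

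For the choice of $(D,*)$: given $(D_1,*_1)$ and $(D_2,*_2)$, the disk theorem and isotopy extension furnish an ambient isotopy of $\Sigma$ from $\id_\Sigma$ to a diffeomorphism $\Phi$ with $\Phi(D_1)=D_2$ and $\Phi(*_1)=*_2$. Then $\Phi$ restricts to an orientation-preserving diffeomorphism $\Sigma_{\circ,1}\to\Sigma_{\circ,2}$ whose induced isomorphism $\hat A_{\circ,1}\to\hat A_{\circ,2}$ intertwines the homotopy intersection forms, hence the derived forms and (by Lemma \ref{sigma_prime}) the maps $\hat\sigma$, so it intertwines $\ee^{\hat\sigma_1(k\log^2(c),-)}$ with $\ee^{\hat\sigma_2(k\log^2(\Phi_\#c),-)}$. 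Since $\Phi\simeq\id_\Sigma$, the automorphism it induces on $\hat A=\hat A(\Sigma)$ is inner, so after descending along $\hat i_1,\hat i_2$ these two H-automorphisms agree in $\Out(\hat A)$; moreover $\Phi_\#c$ is a valid lift for $(D_2,*_2)$ of the conjugacy class of $\Phi(C)$, which equals that of $C$ because $\Phi\simeq\id_\Sigma$. Combining with the lift-independence proved above, $\tau_{k,C}$ computed from $(D_1,*_1,c)$ and from $(D_2,*_2,c_2)$ coincide.

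I expect the principal obstacle to be the key point in the second paragraph: the identification of $\hat\sigma$ on the ideal $K=\ker(\hat A_\circ\to\hat A)$ with inner derivations of $\hat A$, which requires a careful bookkeeping of the homotopy intersection form of $\Sigma_\circ$ in a collar of the boundary circle $\partial D$. By comparison, the passage from "inner derivation'' to "trivial in $\Out(\hat A)$'', and the disk-independence argument, are routine.
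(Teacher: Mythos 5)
Your overall strategy for the lift-independence part runs parallel to the paper's: reduce to comparing $c$ and $cn$ with $n$ in the normal closure of $\nu$, show the two derivations differ by an inner derivation $\ad_w=[w,-]$, and conclude that the exponentials differ by an element of $\Inn(\hat A)$. The computation that $\hat\sigma(g(\nu-1),-)=[\hat i(g),-]$ is correct and is essentially what the paper does. But there is a genuine gap at the last step, and it is not the one you flagged. By working with the whole ideal $K$ at once you only obtain $w\in\hat A_1$ (a limit of sums of group elements $\hat i(ba)$), with no control on $\hat\Delta(w)$. To land in $\Inn(\hat A)$ as defined in the paper you must produce a conjugation by an element of $\hat\pi$, i.e.\ by a \emph{group-like} element, and your justification --- ``$D_1D_2^{-1}$ preserves comultiplication, forcing $u$ to be group-like'' --- is an unproven and nontrivial claim: it amounts to computing the centralizer of $\hat\Delta(\hat A)$ in $\hat A\hat\otimes\hat A$ (conjugations by $u$ and by $u'$ agree on $\hat\Delta$-images exactly when $(u\hat\otimes u)^{-1}\hat\Delta(u)$ lies in that centralizer). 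Nothing in the paper, and nothing elementary, gives this. The paper avoids the issue entirely: it treats one conjugate of $\nu$ at a time, computes explicitly via Lemma \ref{sigma_formula} that the two derivations differ by $[2k\log(i(c)),-]$ with $2k\log(i(c))\in\mathcal{P}(\hat A)$ manifestly \emph{primitive}, and then invokes Lemma \ref{wnd_plus_id}, whose Baker--Campbell--Hausdorff bookkeeping shows the correction term $q$ stays primitive (coderivations preserve $\mathcal{P}(\hat A)$), so that $\ee^q$ is genuinely in $\hat\pi$. Your sketch of ``logarithmic derivative stays inner / expand by BCH'' reproduces the weakly-nilpotent part of that lemma but loses exactly the primitivity that makes the conclusion $\ee^q\in\hat\pi$ work. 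To repair your argument you would either have to prove the centralizer statement, or show directly that your $w$ can be replaced by a primitive element (e.g.\ by reverting to the one-conjugate-at-a-time computation).

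The remaining reductions (orientation, conjugation of the lift, and independence of $(D,*)$ via an ambient isotopy carrying one pointed disk to another) are fine; the disk-independence argument is a legitimate alternative to the paper's route, which instead compares a disk with a smaller disk inside it using the naturality square \eqref{natural} and Lemma \ref{sigma_prime+}. Both work, and yours is no harder.
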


The proof, given in Section \ref{proofofmainlemma},  uses the
results of Section \ref{bideri}. The  properties of   $\tau_{k,C}$
are analogous to (and follow  from) the properties of the
generalized Dehn twists   in  Section \ref{emcg}. In particular,
$\tau_{k,C}$ is a homotopy invariant of~$C$.

\begin{theor}
If $C \subset \Sigma$ is a simple closed curve, then $
 \tau_{1/2,C}=  \hat \zeta   (T_C)
$ where $\hat \zeta: \mcg (\Sigma)\to \Out (\hat A)$ is introduced in Section
\ref{mcgspecial} and $T_C\in \mcg (\Sigma)$ is the classical Dehn
twist about $C$.
\end{theor}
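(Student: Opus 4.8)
The plan is to reduce the closed-surface statement to the case of surfaces with boundary already settled in Theorem~\ref{KK_formula}, using the ``puncturing'' surface $\Sigma_\circ = \Sigma \setminus \Int(D)$ and the commutative square of Lemma~\ref{sigma_prime+}. First I would choose the disk $D \subset \Sigma$ so that its interior is disjoint from the simple closed curve $C$; then $C$ may be regarded as a simple closed curve in $\Sigma_\circ$, and I would pick an orientation of $C$ together with an element $c \in \pi_\circ = \pi_1(\Sigma_\circ,\ast)$ (with $\ast \in \partial D$) whose conjugacy class is represented by $C$. By definition, $\tau_{1/2,C} \in \Out(\hat A)$ is the class of $\ee^{\hat\sigma(\log^2(c)/2,-)} \colon \hat A \to \hat A$, which by Lemma~\ref{sigma_prime+} fits into a commutative square with $\ee^{\hat\sigma_\circ(\log^2(c)/2,-)} = t_{1/2,C} \colon \hat A_\circ \to \hat A_\circ$ along the surjection $\hat\imath \colon \hat A_\circ \twoheadrightarrow \hat A$.

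Next I would invoke Theorem~\ref{KK_formula} for the surface-with-boundary $\Sigma_\circ$: since $C$ is a simple closed curve in $\Sigma_\circ$, we have $t_{1/2,C} = \hat\zeta(T_C) \colon \hat A_\circ \to \hat A_\circ$, where $T_C$ is the classical Dehn twist about $C$ viewed as a self-diffeomorphism of $\Sigma_\circ$ fixing $\ast$. (Here one uses that $T_C$ can be taken to be supported in a regular neighborhood of $C$ disjoint from $D$, so the same diffeomorphism serves for $\Sigma_\circ$ and for $\Sigma$.) The key compatibility to check is that the diagram relating $\hat\zeta(T_C)$ on $\hat A_\circ$ and on $\hat A$ commutes, i.e. $\hat\imath \circ \hat\zeta_\circ(T_C) = \hat\zeta(T_C) \circ \hat\imath$ where on the right $T_C$ acts on $\pi$ via the inclusion $\Sigma_\circ \hookrightarrow \Sigma$. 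This is immediate from functoriality: $T_C$ restricted to $\Sigma_\circ$ induces on $\pi_\circ$ an automorphism whose composition with $\pi_\circ \to \pi$ equals the composition of $\pi_\circ \to \pi$ with $(T_C)_\#$ on $\pi$, because $T_C$ is supported away from $D$ and fixes $\ast$; this passes to the completed group algebras.

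Combining the two commutative squares gives $\hat\imath \circ \ee^{\hat\sigma_\circ(\log^2(c)/2,-)} = (\text{the representative of }\hat\zeta(T_C)) \circ \hat\imath$ on $\hat A_\circ$, and since $\hat\imath$ is surjective this forces $\ee^{\hat\sigma(\log^2(c)/2,-)}$ and (a representative of) $\hat\zeta(T_C)$ to agree on $\hat A$ as honest H-automorphisms, hence certainly as elements of $\Out(\hat A)$; thus $\tau_{1/2,C} = \hat\zeta(T_C)$. The main obstacle I anticipate is bookkeeping about base points: $\hat\zeta \colon \mcg(\Sigma) \to \Out(\hat A)$ is only defined up to inner automorphisms (one must choose a path from $\ast$ to $f(\ast)$, and for $T_C$ one should arrange $T_C(\ast) = \ast$), and one must make sure that the identification of $\Out(\hat A)$ with base point on $\partial D$ (as used for $\Sigma_\circ$) matches the one used in Section~\ref{mcgspecial}; the canonical base-point independence of $\Out(\hat A)$ recalled at the end of Section~\ref{mcgspecial} takes care of this, but it should be spelled out. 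Everything else is formal once Theorem~\ref{KK_formula} and Lemma~\ref{sigma_prime+} are in hand.
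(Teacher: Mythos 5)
Your proposal is correct and follows exactly the route the paper intends: the paper's own proof is the one-line remark that the theorem ``follows from the previous lemmas and Theorem \ref{KK_formula}'', i.e.\ choose $D$ disjoint from $C$, apply Theorem \ref{KK_formula} on $\Sigma_\circ$, and push down along $\hat i$ via Lemma \ref{sigma_prime+} (with Lemma \ref{mainlemmadehn} guaranteeing independence of the choices). Your extra care about the naturality of $\hat\zeta(T_C)$ under the inclusion $\Sigma_\circ\subset\Sigma$ and about base points is a welcome spelling-out of details the paper leaves implicit.
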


This theorem follows from the previous lemmas and  Theorem
\ref{KK_formula}.

\subsection{Biderivations of $\hat A$}\label{bideri}   A
\emph{biderivation} of $\hat A$ is  a $\kk$-linear endomorphism of
$\hat A$ which is a derivation   in the sense of Section \ref{kd01}
and a coderivation   in the sense of Lemma
\ref{sigma_comultiplication}. If a biderivation $\delta$  of $\hat
A$ is   weakly nilpotent, then $\ee^\delta     $ is a well-defined
H-automorphism of $\hat A$. For example,    given    $p \in \hat A$,
consider the derivation $\ad_p:\hat A\to \hat A$ defined by $\ad_p
(a) = [p,a] =pa-ap$ for   $a\in \hat A$. If $p \in \mathcal{P} (\hat
A) $, i.e., if $\hat \Delta (p)=p\hat \otimes 1+1 \hat \otimes p$,
then it  is easy to check that $\ad_p$ is
  a biderivation. Moreover, $\ad_p$ is weakly nilpotent because   $  \mathcal{P} (\hat A)  \subset \hat A_1$ and therefore $\ad_p (\hat A_i)\subset \hat
A_{i+1}$ for all $i\geq 0$. The exponential $\ee^{\ad_p}:\hat A\to
\hat A$  is an inner
 automorphism of $\hat A$:  it carries any $a\in \hat A$ to
 $ \ee^{p} a \ee^{-p}$ (see Appendix A) and   $\ee^p\in \hat \pi$ by \eqref{glike_primitive}.

\begin{lemma}\label{wnd_plus_id}
Let $\delta$ be a weakly nilpotent biderivation of $\hat A$.    For
any $p \in \mathcal{P} (\hat A) $, the homomorphism
$\delta_p=\delta+\ad_p:\hat A\to \hat A$ is a weakly nilpotent
biderivation and  $\ee^{\delta_p}   \circ (\ee^\delta)^{-1}:\hat
A\to \hat A $ is an inner  automorphism   of $\hat A$.
\end{lemma}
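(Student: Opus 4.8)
The plan is to verify first that $\delta_p = \delta + \ad_p$ is again a weakly nilpotent biderivation, and then to identify the automorphism $\ee^{\delta_p}\circ(\ee^\delta)^{-1}$ as an inner automorphism by exhibiting the relevant group-like element. That $\delta_p$ is a derivation is immediate since the derivations of $\hat A$ form a $\kk$-module and $\ad_p$ is a derivation. That $\delta_p$ is a coderivation follows because, as noted in Section \ref{bideri}, $\ad_p$ is a coderivation when $p\in\mathcal{P}(\hat A)$, and coderivations (in the sense of \eqref{coderi}) also form a $\kk$-module. For weak nilpotency: $\delta$ preserves every $\hat A_m$ by hypothesis, and $\ad_p(\hat A_i)\subset \hat A_{i+1}$ because $p\in\mathcal{P}(\hat A)\subset \hat A_1$; hence $\delta_p(\hat A_m)\subset \hat A_m$ for all $m$. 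Moreover, given $m$, choose $N$ with $\delta^N(\hat A)\subset \hat A_m$; expanding $\delta_p^{\,M}$ for $M$ large as a sum of monomials in $\delta$ and $\ad_p$, any monomial with at least $m$ factors of $\ad_p$ lands in $\hat A_m$, and any monomial with fewer than $m$ factors of $\ad_p$ and at least $Nm$ total factors contains a block of $\ge N$ consecutive $\delta$'s, hence lands in $\hat A_m$ as well (using that both $\delta$ and $\ad_p$ preserve the filtration, so the remaining factors do no harm); thus $\delta_p^{\,Nm+m}(\hat A)\subset \hat A_m$. So $\ee^{\delta_p}$ is a well-defined H-automorphism of $\hat A$.

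The core of the argument is to show $\ee^{\delta_p}\circ(\ee^\delta)^{-1}$ is inner. The natural approach is the ``variation of constants'' trick: introduce a formal parameter and study the path of automorphisms $s\mapsto \ee^{\delta_{sp}}$, or more directly define $\varphi_s = \ee^{\delta + s\,\ad_p}\circ(\ee^{\delta})^{-1}$ and show that $\varphi_1$ is conjugation by $\ee^{q}$ for a suitable primitive $q$ depending on $p$, $\delta$. Concretely, I would work inside the (filtered) ring $\mathrm{End}_\kk(\hat A)$ and consider the one-parameter family $u(s)=\ee^{\delta+s\,\ad_p}$, which satisfies $\frac{d}{ds}u(s) = \big(\int_0^1 \ee^{\sigma(\delta+s\,\ad_p)}\,\ad_p\,\ee^{-\sigma(\delta+s\,\ad_p)}\,d\sigma\big)u(s)$ by the standard derivative-of-exponential formula (all integrals here are finite sums modulo each $\hat A_m$, since $\ad_p$ raises filtration). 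Each operator $\ee^{\sigma(\delta+s\,\ad_p)}\,\ad_p\,\ee^{-\sigma(\delta+s\,\ad_p)}$ equals $\ad_{p_{\sigma,s}}$ for some $p_{\sigma,s}\in\mathcal{P}(\hat A)$, because conjugating $\ad_p$ by an H-automorphism (which commutes with $\hat\Delta$ and $S$) yields $\ad$ of the image of $p$, and that image is again primitive. Integrating, $\frac{d}{ds}u(s)\circ u(s)^{-1} = \ad_{r(s)}$ with $r(s)\in\mathcal{P}(\hat A)$; a Magnus-type argument then produces a primitive $q$ with $u(1)\circ u(0)^{-1}=\ee^{\ad_q}$, and by the computation recalled in Section \ref{bideri}, $\ee^{\ad_q}$ is conjugation by $\ee^{q}\in\hat\pi$, hence inner.

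The main obstacle I anticipate is the bookkeeping in this last step: one must justify convergence of all the series and integrals in the filtration topology on $\mathrm{End}_\kk(\hat A)$, and one must check that the Magnus expansion assembling the $\ad_{r(s)}$'s into a single $\ad_q$ stays within $\ad$ of primitive elements — i.e., that the relevant Lie-algebraic manipulations never leave $\mathcal{P}(\hat A)$, which holds because $\mathcal{P}(\hat A)$ is a Lie subalgebra and $\ad\colon \mathcal{P}(\hat A)\to\mathrm{End}_\kk(\hat A)$ is a Lie homomorphism with $[\ad_a,\ad_b]=\ad_{[a,b]}$. A cleaner alternative, avoiding integrals entirely, is to induct on the filtration level: working modulo $\hat A_{m+1}$ for increasing $m$, solve for a primitive $q_m$ with $\ee^{\delta_p}\circ(\ee^\delta)^{-1}\equiv\ee^{\ad_{q_m}}\pmod{\hat A_{m+1}}$, using at each stage that the obstruction cocycle is $\ad$ of a primitive element (the difference of two H-automorphisms agreeing modulo $\hat A_m$ is, to leading order, a coderivation, hence conjugation by a primitive). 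Either route reduces the lemma to the already-recorded fact that $\ee^{\ad_q}$ is conjugation by $\ee^q\in\hat\pi$; I would present whichever is shorter, likely the inductive one, since it sidesteps the analytic subtleties of the derivative-of-exponential formula.
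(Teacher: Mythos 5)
Your first half is correct and coincides with the paper's: $\delta_p$ is a biderivation because derivations and coderivations each form a $\kk$-module and $\ad_p$ is a biderivation for primitive $p$, and your monomial-counting argument for weak nilpotency (every long enough monomial in $\delta,\ad_p$ has either $\geq m$ occurrences of $\ad_p$ or $\geq N_m$ consecutive occurrences of $\delta$) is exactly the one used in the paper.

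For the main claim you offer two routes and announce you would present the inductive one; that route has a genuine gap. The parenthetical justification --- that the leading-order obstruction is a coderivation, ``hence conjugation by a primitive'' --- is false: a coderivation, indeed a biderivation, of $\hat A$ need not be of the form $\ad_q$ with $q\in\mathcal{P}(\hat A)$. The derivations $\hat\sigma(k\log^2(c),-)$, and $\delta$ itself, are weakly nilpotent biderivations that are in general not inner; the whole point of the paper is that the resulting twists are not inner automorphisms. So the induction cannot close without an additional argument explaining why this particular obstruction is inner, and supplying that argument essentially forces you back to the structure of the BCH series. Your first route (derivative of the exponential plus a Magnus expansion) does supply the needed input --- every operator it produces is a conjugate of $\ad_p$ by an H-automorphism, or an iterated bracket of such, hence $\ad$ of a primitive element --- and could be made rigorous, but it carries substantial unverified bookkeeping. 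The paper's argument is cleaner: it builds a continuous algebra homomorphism $\kk\langle\langle x,y\rangle\rangle\to\Hom_\kk(\hat A,\hat A)$ sending $x\mapsto\delta$, $y\mapsto\ad_p$, applies Baker--Campbell--Hausdorff to $\ell=\log(\ee^{x+y}\ee^{-x})$, which is a Lie series each of whose monomials contains $y$, and uses the identity $[d,\ad_c]=\ad_{d(c)}$ to collapse the image of $\ell$ to a single $\ad_q$; here $q\in\mathcal{P}(\hat A)$ because coderivations preserve $\mathcal{P}(\hat A)$, so $\ee^{\delta_p}\ee^{-\delta}=\ee^{\ad_q}$ is conjugation by $\ee^q\in\hat\pi$. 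You should either carry out your first route in full or adopt this BCH substitution argument; the inductive shortcut as stated does not work.
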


\begin{proof}  Since $\delta$ and $\ad_p$
are biderivations, so is $\delta_p=\delta+\ad_p$. Both $\delta$ and
$\ad_p$ carry $\hat A_m$ to $\hat A_m$ for all $m$, and so does
$\delta_p$. To show that $\delta_p $ is weakly nilpotent, it remains
to check
  that for any $m \geq 0$, a certain power of
  $\delta_p$ carries $
 \hat A$ into $ \hat A_m$.  Since $\delta$ is weakly nilpotent,  there is an integer $N_m\geq 0$ such that
$\delta^{N_m}(\hat A) \subset \hat A_m$.   Observe that any monomial
in $\delta ,\ad_p$ of total degree $ \geq mN_m$ contains $\geq m$
entries of $\ad_p$ or    $ \geq N_m$   consecutive entries of
$\delta$. Such a monomial   carries $\hat A=\hat A_0$ to $\hat A_m$
because  each entry of $ \ad_p$ increases the filtration degree by
$1$ and $  \delta^{N_m} (\hat A)\subset \hat A_m$. Therefore
$\delta_p^{mN_m}(\hat A)\subset \hat A_m$ for all $m$.

 Consider the algebra of formal power
series $\kk\langle \langle x,y \rangle\rangle $ in two non-commuting
variables $x, y$. We claim that there is an algebra homomorphism
\begin{equation}\label{PPP}
  \kk\langle \langle x,y \rangle \rangle \longrightarrow \Hom_\kk(\hat A,\hat A), \, \,
u \longmapsto P_u
\end{equation}
  such that $P_x=\delta$ and $P_y=\ad_{p}$.
This homomorphism  carries any   $u \in \kk\langle \langle x,y
\rangle \rangle$ to
  $P_u  = \sum_{n\geq 0} u_n\left(\delta,\ad_p\right)$   where
  $u_n $  denotes the homogeneous degree $n$ part of $u$. We need
only to check that, given an  integer $m\geq 0$, we have 
  $u_n\left(\delta,\ad_p\right)(\hat A) \subset \hat A_m$   for all
sufficiently big $n$. As above,  any monomial in $x,y$ of total
degree $ \geq mN_m$ contains   $\geq m$ entries of $y$ or    $ \geq
N_m$ consecutive entries of $x$. Therefore the image of such a
monomial under \eqref{PPP}  carries $\hat A$ to $\hat A_m$.

Set $\ell  = \log(\ee^{x+y}\ee^{-x}) \in \kk\langle \langle x,y
\rangle \rangle$. By  the Baker--Campbell--Hausdorff formula,
$$
\ell  = (x+y) + (-x) + \frac{1}{2}[x+y,-x] +\cdots
=  y + \frac{1}{2}[x,y]+\cdots
$$
is a series of Lie polynomials. Clearly,   $\ee^{x+y}= \ee^{\ell
}\ee^x$. Applying   $P$, we obtain
$$\ee^{\delta_p}=
\ee^{\delta+\ad_p} = \ee^{P_\ell} \ee^{\delta}.
$$
The claim of the lemma follows now from the fact that $\ee^{P_\ell}$
is an inner automorphism of $\hat A$.
 To see this,   expand
$$
\ell  = y + \sum_{n\geq 0}\sum_{i \in R_n}  k_{n,i} \left[v_{n,i}^{(1)},\dots\left[v_{n,i}^{(n)},[x,y]\right]\cdots \right]
$$
where $i$ runs over a finite set of indices $R_n$ depending on $n$,
$k_{n,i} \in \QQ\subset \kk$, and  $v_{n,i}^{(j)}\in \{x,y\}$ for
$j=1,...,n$. Then $$P_\ell   = \ad_p  + \sum_{n\geq 0}\sum_{i\in
R_n}  k_{n,i}
\left[d_{n,i}^{(1)},\dots\left[d_{n,i}^{(n)},[\delta,\ad_p]\right]\cdots
\right] $$ where $d_{n,i}^{(j)}$ is   $\delta$ or $\ad_p$ depending
on whether $v_{n,i}^{(j)}$ is $x$ or $y$. Observe that $[d, \ad_c]=
\ad_{d(c)}$ for any derivation $d$ of $\hat A$ and $c\in \hat A$.
Therefore
\begin{equation}\label{P_ell}
P_\ell     = \ad_p  +\sum_{n\geq 0}\sum_{i\in R_n}  k_{n,i} \ad_{d_{n,i}^{(1)} \cdots d_{n,i}^{(n)} \delta(p)}.
\end{equation}
  For any $m\geq 0$,    $n\geq mN_m-1$, and $i\in R_n$, the monomial $v_{n,i}^{(1)}\cdots
  v_{n,i}^{(n)}x$
contains   $\geq m$ entries of $y$ or    $ \geq N_m$ consecutive
entries of $x$. In both cases,   $d_{n,i}^{(1)} \cdots d_{n,i}^{(n)}
\delta(p)\in \hat A_m$. We deduce that $P_\ell = \ad_q$ where
$$
q = p+  \sum_{n\geq 0}\sum_{i\in R_n}  k_{n,i} d_{n,i}^{(1)} \cdots d_{n,i}^{(n)} \delta(p)
$$
is a well-defined element  of $\hat A$.   Observe that all
coderivations of $\hat A$ carry the $\kk$-module $\mathcal{P} (\hat
A) $ into itself. Applying this   to the coderivations
$d_{n,i}^{(j)}$, we   deduce that $q \in \mathcal{P} (\hat A)$.
Hence $\ee^{P_\ell}= \ee^{\ad_q}$ is an inner automorphism of $\hat
A$.
\end{proof}

\subsection{Proof of Lemma \ref{mainlemmadehn}}\label{proofofmainlemma}
We first check the independence of $\tau_{k,C}$ of the choice of
$c\in \pi$. Since the kernel of the inclusion homomorphism
$\pi_\circ \to \pi$ is normally generated by $\nu=[\partial D]$, it
is enough to prove that for any $a,c\in \pi_\circ$,
$$
\ee^{\hat\sigma\left(k\log^2(ca\nu a^{-1}),-\right)} =\ee^{\hat\sigma\left(k\log^2(c),-\right)}
\ \big(\modu\ \Inn(\hat A)\big).
$$
By Lemma \ref{le2},
$$
 \hat\sigma\left(k\log^2(ca\nu a^{-1}),-\right)
=  \hat\sigma\left(a^{-1}k\log^2(ca\nu a^{-1})a ,-\right)
=  \hat\sigma\left(k\log^2(a^{-1}ca\nu),-\right)
$$
and, similarly,
$$  \hat\sigma\left(k\log^2(c),-\right) =
 \hat\sigma\left(k\log^2(a^{-1}ca),-\right) .
$$
Therefore we need only to prove that for all $c\in \pi_\circ$,
\begin{equation}\label{to_show}
\ee^{\hat\sigma\left(k\log^2(c\nu),-\right)}=\ee^{\hat\sigma\left(k\log^2(c),-\right)}
\ \big(\modu\ \Inn(\hat A)\big).
\end{equation}

Let $\eta_\circ:A_\circ \times A_\circ \to A_\circ$ be the homotopy
intersection form of $\Sigma_\circ$.  For any $b \in \pi_\circ$, we
have $\eta_\circ(  \nu, b)=b-1$ and therefore
$$
\eta_\circ(c \nu, b)= \eta_\circ(c,b) + c \eta_\circ(\nu,b)
= \eta_\circ(c,b) + c b - c.
$$
We expand $\eta_\circ(c,b)= \sum_{x\in \pi_\circ} l_x x$ where
$l_x\in \kk$. Lemma \ref{sigma_formula}   gives
\begin{eqnarray*}
\hat \sigma_\circ \left(k\log^2(c \nu ),b\right)
&=&\sum_{x\in \pi_\circ} 2k b l_x x^{-1}\log(c \nu  )x  +2k c^{-1}\log(c \nu) cb -2kb c^{-1} \log(c\nu )c \\
&=& \sum_{x\in \pi _\circ} 2k b l_x x^{-1}\log(c  \nu  )x  +2k  [\log( \nu c ),  b ].
\end{eqnarray*}
Applying   $\hat i :\hat A_\circ \to \hat A $ and setting
$c'=i(c)\in \pi$, we obtain
$$
\hat \sigma \left(k\log^2(c\nu),i(b)\right) =
 \sum_{x\in \pi_\circ} 2k  l_x i(bx^{-1}) \log(c'    ) i(x)   +   2k [  \log(   c' ),  i(b) ].
$$
 A similar computation gives
\begin{eqnarray*}
\hat \sigma \left(k\log^2(c ),i(b)\right) =
 \sum_{x\in \pi_\circ} 2k  l_x i(bx^{-1}) \log(c'    ) i(x).
\end{eqnarray*}
 Thus,  the
weakly nilpotent biderivations $\hat \sigma
\left(k\log^2(c\nu),-\right)$ and  $\hat \sigma
\left(k\log^2(c),-\right)$ of $\hat A$ differ by  $[2k \log(   c' )
, - ]$. Since
  $2k \log (c')\in {\mathcal P} (\hat A)$,  
 Lemma \ref{wnd_plus_id} implies (\ref{to_show}).

 To finish the proof, we need to show  the independence of $\tau_{k,C} $ of the choice of $D$ and $*\in \partial
D$. If $D^+\subset \Int (D)$ is a smaller 2-disk with pointed
boundary, then   $\Sigma_\circ=\Sigma \setminus \Int (D)$ is a
subsurface of $\Sigma^+_\circ=\Sigma \setminus \Int (D^+)$ and the
required result follows from the naturality of the twists
\eqref{natural} and   Lemma~\ref{sigma_prime+}. Passing in this way
from a disk in $\Sigma$ to a smaller (or a bigger) disk we can
relate any two embedded disks in $\Sigma$ with pointed boundary.
This implies our claim.

\subsection{Remarks}\label{pseudoextendedmcg} 1. The H-automorphisms of $\hat A$ act on $\hat A_1/\hat A_2\simeq I/I^2
\simeq H_1(\Sigma;\kk)$ in the obvious way. 
Let $\Aut_{\!\mediumdot\!} (\hat A)$ be the subgroup of $\Aut(\hat A)$ formed by the
H-automorphisms preserving  the homological intersection form in
$H_1(\Sigma;\kk)$. Clearly, $\Inn(\hat A) \subset \Aut_{\!\mediumdot\!}
(\hat A)$. It is easy to check that for any closed curve $C$ and
$k\in \kk$,   the twist  $ \tau_{k,C} $ lies in $\Aut_{\!\mediumdot\!}
(\hat A)/\Inn(\hat A) \subset \Out (\hat A)$.

2. If $\partial \Sigma\neq \emptyset$, then the twists $\tau$
constructed in this
  section can be computed without cutting out a disk from
$\Sigma$.     For any   $\ast\in \partial \Sigma$, any closed curve
$C$ in $\Sigma$, and any $k\in \kk$,  Section \ref{emcg} yields a
twist  $ t_{k,C}\in \Aut(\hat A) $
  where $A=\kk[\pi_1(\Sigma, \ast)]$. It can be checked that projecting to $\Out (\hat A)$
  we obtain       $ \tau_{k,C}   $. Note the following consequence: the twists $  t_{k,C} $ corresponding
to different choices of   $\ast$ on $\partial \Sigma$ are obtained
from each other through transportation along a path connecting the
base points followed by an inner automorphism of the completed group
algebra.

\section{Symplectic expansions and the Kawazumi--Kuno approach} \label{KK}

In this section  $\Sigma$ is a compact connected oriented surface
with connected boundary,     $\ast \in
\partial \Sigma$, and $\kk=\QQ$. We  outline the work of Kawazumi and Kuno
\cite{KK,Ku} who first defined generalized Dehn twists for  curves
in such a  $\Sigma$ using symplectic expansions of
$\pi=\pi_1(\Sigma, \ast)$. We show that their definition is
equivalent to ours.

\subsection{Symplectic expansions}\label{symplectic}
Set  $H=H_1(\pi;\QQ)$ and let  $T =  \oplus_{m\geq 0} H^{\otimes m}$
be the tensor algebra of $H$. The degree completion $ \hat T
=\prod_{m\geq 0} H^{\otimes m}$ of $T$   has the filtration
 $
\hat T = \hat T_0 \supset \hat T_1\supset  \hat T_2 \supset \cdots$
where $ \hat T_m= \prod_{n\geq m} H^{\otimes n} $ for all $m\geq 0$. The algebra $\hat T$ has a natural
 comultiplication   carrying any $h\in H$
 to   $h\hat \otimes 1+ 1\hat \otimes h$.   This turns $\hat T$ in a complete Hopf algebra.

A \emph{Magnus expansion} of $\pi$ is a monoid homomorphism $\theta:
\pi \to \hat T$ such that  $\theta(x) = 1 + [x] \  ({\text {mod}} \,
\hat T_{2})$ for all $x\in \pi$, where $[x]\in H$ is the homology
class of $x$ (see \cite{Ka}). Such a $\theta$  induces an algebra
isomorphism $\hat \theta: \hat A \to \hat T$ carrying $\hat
A_m\subset \hat A$ onto $\hat T_m$ for all $m\geq 0$. A \emph{symplectic
expansion} of $\pi$  is a Magnus expansion $\theta: \pi \to \hat T$
satisfying two additional conditions: the group-like condition and
the symplectic condition (see \cite{Ma}). The former says that all
elements of $\theta(\pi)$ are group-like or equivalently that $\hat
\theta $ preserves  comultiplication.  To state the  symplectic  condition,
note that the $\QQ$-valued intersection form $\mediumdot$ in $H$ is
non-degenerate (here  we use   that $\Sigma$ is compact and
$\partial \Sigma$ is connected). Consider the duality isomorphism
\begin{equation}\label{duality}
H \stackrel{\simeq}{\longrightarrow} H^*=\Hom_\QQ (H, \QQ),  \ h\longmapsto h\mediumdot(-).
\end{equation}
We use this isomorphism to transform  the intersection form
$\mediumdot \in  \Lambda^2 H^*$  into an  element $\omega$ of $
\Lambda^2H \subset H^{\otimes 2} \subset T$. The symplectic
condition  on $\theta$ says  that  $\hat \theta$ carries the
homotopy class   of the  loop $\partial \Sigma$ based at $\ast$ to
$\ee^{-\omega}$.

\subsection{Symplectic derivations of $\hat T$}\label{tensorial_sigma}

A derivation $d$ of the algebra $\hat T$  is {\it filtered} if
$d(\hat T_m)\subset \hat T_m$ for all $m\geq 0$. Denote by
$\Der(\hat T)$ the Lie algebra of filtered derivations of $\hat T$.
Let
 $\Der_\omega(\hat T)$ be the Lie subalgebra of $\Der(\hat T)$ consisting of the derivations  that vanish on $\omega \in H^{\otimes 2}$.
Restricting the derivations in $\hat T$ to $H$ and using
(\ref{duality}), we obtain canonical  isomorphisms
$$
\Der(\hat T) \simeq \Hom(H,\hat T_{ 1})
\simeq H^* \otimes \hat T_{ 1} \simeq H \otimes \hat T_{ 1}.
$$
It is known that these isomorphisms carry $\Der_\omega(\hat T)$ onto
$$D=\Ker\left([-,-]: H \otimes \hat T_{1} \longrightarrow \hat T_{ 2}\right)
\subset  H \otimes \hat T_{1} \subset   \hat T_{ 2}.$$ Hence there
is an action of $D$ on   $\hat T$ by filtered derivations vanishing
on $\omega$. The Lie algebra $D\simeq \Der_\omega(\hat T)$ was
introduced by Kontsevich \cite{Ko}, and   is sometimes called the
\emph{Lie algebra of symplectic derivations}.

   Kawazumi and Kuno \cite{KK}  define a ``cyclicization'' map $N: \hat T_1 \to \hat T_1$   by
$$
N(
h_1 \otimes  \cdots  \otimes h_m)= \sum_{i=1}^{m}
h_i  \otimes \cdots  \otimes h_m  \otimes h_1 \otimes \cdots  \otimes  h_{i-1}
$$
for any $m\geq 1$ and   $h_1, \ldots, h_m\in H$.
We have $ N(\hat T_2)\subset D$ (see \cite{KK}).
 Then $ \hat T $  acts on itself by derivations (not necessarily filtered) as follows:
  $\QQ=H^{\otimes 0} $ acts as zero;   any $h\in H
=H^{\otimes 1} $ acts by $k\mapsto h \mediumdot k$ for  $k\in H$;
any $t\in \hat T_2$ acts as    $N(t)\in D \simeq \Der_\omega(\hat
T)$. For any $a,b\in \hat T$, let $\langle a, b \rangle\in \hat T$
be the evaluation
 of  the derivation determined by  $a$ on
$b$. The following theorem  uses the pairing $\langle  -, -
\rangle:\hat T\times \hat T\to \hat T$ to give a tensorial
computation of the pairing $\hat \sigma: \hat A \times \hat A\to
\hat A$ obtained as the completion of the form $\sigma:A\times A\to
A$ defined by \eqref{sigma}.

\begin{theor}[Kawazumi--Kuno \cite{KK}] \label{KK_sigma}
For any  symplectic expansion $\theta$ of $\pi$,  the following
diagram  commutes:
$$
\xymatrix{
\hat A \times \hat A \ar[rrr]^-{\hat \sigma} \ar[d]_{  \hat\theta \times \hat\theta}^-\simeq & && \hat A\ar[d]^-{\hat \theta}_-\simeq\\
{ \hat T  \times \hat T} \ar[rrr]^-{\scriptsize  {\langle  -,- \rangle}} &  &&  \hat T.
}
$$
\end{theor}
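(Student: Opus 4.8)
The plan is to reduce the identity $\hat\theta\circ\hat\sigma = \langle-,-\rangle\circ(\hat\theta\times\hat\theta)$ to a verification on a dense set of simple tensors, exactly as in the proof of Lemma~\ref{lemma_cs}. Both sides are $\QQ$-bilinear maps $\hat A\times\hat A\to\hat T$, and since $\hat\theta$ carries $\hat A_m$ onto $\hat T_m$ and $\hat\sigma$ decreases the filtration degree by at most $2$ (by the properties recorded in Section~\ref{COCO4}), it suffices to check the equality modulo $\hat T_{m}$ for every $m\geq 1$; because $\iota(\pi)$ generates $\hat A/\hat A_m$ as a $\QQ$-module, it is enough to verify the identity on pairs $(a,b)$ with $a,b\in\pi$, in fact on pairs of group elements represented by transverse loops. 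The right-hand side applied to $\theta(a),\theta(b)$ is the value of the derivation $\langle\theta(a),-\rangle$ of $\hat T$ on $\theta(b)$, where the derivation attached to a group-like element $\theta(a)=\exp(\xi)$ with $\xi\in\mathcal P(\hat T)$ is built degree by degree: the degree-$1$ part of $\xi$ acts as $h\mapsto h\mediumdot(-)$ and the higher part $\xi_{\geq 2}\in\hat T_2$ acts via $N(\xi_{\geq 2})\in D\simeq\Der_\omega(\hat T)$.

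The heart of the matter is then a direct computation of $\langle\theta(a),\theta(b)\rangle$ and a comparison with $\hat\theta(\hat\sigma(a,b))$. For $\hat\sigma(a,b)$ I would use Formula~\eqref{condensed_sigma}, or more concretely Lemma~\ref{intersection_sigma}, which writes $\sigma(a,b)=\sum_{p\in\alpha\cap\beta}\varepsilon_p(\alpha,\beta)\,[\beta_{*p}\alpha_p\beta_{p*}]$; applying $\hat\theta$ turns each term into $\theta(\beta_{*p})\,\theta(\alpha_p)\,\theta(\beta_{p*})$ in $\hat T$, and using the antipode one rewrites this as a conjugation of $\theta(\alpha_p)$ by $\theta(\beta_{*p})$. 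On the other side, the coderivation property of $\langle\theta(a),-\rangle$ (the derivations in $D$ preserve comultiplication, and $\langle\theta(a),-\rangle$ is a derivation) means that evaluating it on the group-like element $\theta(b)$ is governed entirely by its values on the primitive $\log\theta(b)$, which in turn reduces, since $\log\theta(b)$ is built from the homology classes of the arcs of $\beta$, to the pairing of $\theta(a)$ with individual homology classes $h\in H$. That pairing is exactly $\langle\theta(a),h\rangle$, and unwinding the definition of the $\hat T_2$-action through $N$ and the duality isomorphism \eqref{duality}, one recovers the sum over intersection points $p$ weighted by signs $\varepsilon_p$, with the "insertion at the point $p$" contributing precisely the conjugated copy of $\theta(\alpha_p)$. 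The bookkeeping of which arc of $\beta$ each intersection point sits on, and the matching of the cyclic structure coming from $N$ with the cyclic word $\beta_{*p}\alpha_p\beta_{p*}$, is the combinatorial core of the argument.

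I expect the main obstacle to be precisely this last matching: reconciling the \emph{cyclicization} operator $N$ appearing on the Kawazumi--Kuno side with the explicit conjugated-loop formula for $\sigma$ on our side. One clean way to organize it is to first prove the identity when $b$ ranges over a fixed free basis of $\pi$ (whose members can be taken to be simple loops in $\Sigma$, so that each $\beta$ meets $\alpha$ in arcs with controlled combinatorics), reduce the general $b$ to this case using that $\hat\sigma(a,-)$ and $\langle\theta(a),-\rangle$ are both derivations and that $\hat\theta$ is an algebra isomorphism, and finally reduce the general $a$ to the case $a\in\pi$ using bilinearity and the filtration argument above. A secondary technical point is checking that $\langle\theta(a),-\rangle$ really is a (possibly non-filtered) well-defined derivation of $\hat T$ for group-like $\theta(a)$, i.e.\ that the sum over degrees converges in the $\hat T$-adic topology when evaluated on any element of $\hat T$; this follows because $\log\theta(a)\in\hat T_1$, so its degree-$n$ part acts with filtration shift at least $n-1$, but it should be stated explicitly before the main computation. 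Once these reductions are in place, the verification on basis loops is a finite, if somewhat intricate, diagram chase through the definitions of $N$, \eqref{duality}, and \eqref{sigma}.
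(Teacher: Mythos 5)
There is a genuine gap, and it sits exactly where you locate ``the combinatorial core of the argument.'' Your reductions (bilinearity, the filtration argument, the derivation property in the second variable) are fine, but the central computation is not carried out, and the route you propose to it would fail. The intersection-point formula of Lemma \ref{intersection_sigma} decomposes $\sigma(a,b)$ according to the arcs of $\beta$, whereas the tensor series $\theta(b)$ has no arc-by-arc meaning in degrees $\geq 2$: its degree-one part is the total homology class $[b]$, and its higher parts are constrained only by group-likeness and the symplectic normalization of $\theta(\nu)$, not by the geometry of any particular representative loop. So the hoped-for termwise matching between intersection points $p$ and single contractions in $\langle\theta(a),\theta(b)\rangle$ is not available, and the ``finite diagram chase'' on basis loops does not exist --- both sides are infinite series whose higher terms depend on the choice of $\theta$. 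The most telling symptom is that your outline never uses the symplectic condition $\hat\theta(\nu)=\ee^{-\omega}$ (only group-likeness), yet the theorem fails for a generic group-like Magnus expansion; any correct proof must invoke it somewhere.

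For contrast, the paper's proof avoids all pointwise geometry. It replaces $\hat\eta$ by the equivalent F-pairing $\eta_-(a,b)=\hat\eta(a,b)-(a-\hat\aug(a))\,s(-\log\nu)\,(b-\hat\aug(b))$, which has the same derived form, and shows via Theorem \ref{tensorial_eta_formula} --- whose proof is where the symplectic condition enters, through the uniqueness statement of Lemma \ref{le5+} applied to $\nabla=\nu-1$ --- that $\hat\theta$ transports $\eta_-$ to the single-contraction pairing $(c,d)\mapsto (c-\varepsilon(c))\stackrel{\mediumdot}{\leadsto}(d-\varepsilon(d))$, which is \emph{homogeneous of degree $-2$}. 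Feeding this into the Hopf-algebraic expression of Lemma \ref{lemma_cs} shows that $\sigma'=\hat\theta\,\hat\sigma\,(\hat\theta^{-1}\times\hat\theta^{-1})$ is itself homogeneous of degree $-2$, so the leading-term congruence of Lemma \ref{le4} becomes an exact equality, which is then matched against the explicit formula for $\langle-,-\rangle$ in Lemma \ref{concrete_derivations}. The homogeneity step is the key idea missing from your proposal: it is what lets one get away with knowing only the symbol of $\hat\sigma$, rather than having to control all higher-order terms of the expansion.
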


The statement of this theorem      in \cite{KK} has  a minus sign
  because  the duality  used there is minus the
duality (\ref{duality}). We give below a new proof of Theorem \ref{KK_sigma}.

Kuno  \cite{Ku} defines a \lq\lq generalized Dehn twist" $t_C$ about
a closed curve $C$  in $\Sigma$   as follows. Let $c\in \pi$ lie in
the conjugacy class determined by   $C$ (endowed with any
orientation). Pick a symplectic expansion $\theta$ of $\pi$ and,
following \cite{KK}, consider the derivation
$$
L^\theta(c)= \langle \log^2(\theta(c))/2, -\rangle  :\hat T \longrightarrow \hat T   .
$$
Then $\ee^{L^\theta(c)}$ is an algebra automorphism of $\hat T$ and
 $
t_C = \hat \theta^{-1}   \ee^{L^\theta(c)}   \hat \theta  $ is an
algebra automorphism of
 $\hat A$. Kuno \cite{Ku} shows that $t_C$   does not depend on the choice
of $\theta$. By \cite{KK},  if $C$ is simple, then $t_C $ is the
automorphism   of $\hat A$ induced  by the classical Dehn twist about $C$. The
following lemma establishes the equivalence of the Kawazumi--Kuno
approach with ours.

\begin{lemma}
 For any  closed curve $C$ in $\Sigma$,
  we have $t_C=t_{ 1/2, C}$.
\end{lemma}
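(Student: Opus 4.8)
The plan is to unwind the definitions on both sides and reduce the claim to Theorem~\ref{KK_sigma}. By definition, $t_{1/2,C} = \ee^{\hat\sigma(\log^2(c)/2,-)}$ where $c\in\pi$ lies in the conjugacy class of $C$, while Kuno's twist is $t_C = \hat\theta^{-1}\,\ee^{L^\theta(c)}\,\hat\theta$ where $L^\theta(c) = \langle \log^2(\theta(c))/2, -\rangle$. So it suffices to show that the derivation $\hat\sigma(\log^2(c)/2,-)$ of $\hat A$ is intertwined with the derivation $L^\theta(c)$ of $\hat T$ via the algebra isomorphism $\hat\theta:\hat A \to \hat T$; exponentiating then gives the equality of automorphisms.

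The key step is the intertwining identity
$$
\hat\theta \circ \hat\sigma(x,-) \circ \hat\theta^{-1} = \langle \hat\theta(x), - \rangle
$$
for all $x\in\hat A$, which is exactly the commutativity of the diagram in Theorem~\ref{KK_sigma} (read as an equality of the two $\kk$-bilinear maps $\hat A\times\hat A\to\hat A$, evaluated after applying $\hat\theta$). First I would invoke this theorem to get, for any $x\in\hat A$ and $a\in\hat A$,
$$
\hat\theta\bigl(\hat\sigma(x,a)\bigr) = \langle \hat\theta(x), \hat\theta(a)\rangle.
$$
Taking $x = \log^2(c)/2$ and using that $\hat\theta$ is an algebra homomorphism (so $\hat\theta(\log^2(c)/2) = \log^2(\hat\theta(c))/2$, since $\log$ is computed from the series $\sum (-1)^{n+1}(z-1)^n/n$ which $\hat\theta$ commutes with, and $\hat\theta(c) = \theta(c)$ under $\iota$), we get
$$
\hat\theta\bigl(\hat\sigma(\log^2(c)/2,a)\bigr) = \bigl\langle \log^2(\theta(c))/2,\; \hat\theta(a)\bigr\rangle = L^\theta(c)\bigl(\hat\theta(a)\bigr).
$$
Thus $\hat\theta \circ \hat\sigma(\log^2(c)/2,-) = L^\theta(c)\circ\hat\theta$ as $\kk$-linear maps $\hat A\to\hat T$. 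Both sides are weakly nilpotent (the left side by Lemma~\ref{le5}, since $c\mediumdott{\eta}c = [c]\mediumdot[c]=0$ by skew-symmetry of the intersection form, and the right side correspondingly), so the exponentials are defined, and from the intertwining relation $\ee^{L^\theta(c)}\circ\hat\theta = \hat\theta\circ\ee^{\hat\sigma(\log^2(c)/2,-)}$, i.e.
$$
t_C = \hat\theta^{-1}\,\ee^{L^\theta(c)}\,\hat\theta = \ee^{\hat\sigma(\log^2(c)/2,-)} = t_{1/2,C}.
$$

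The main obstacle is checking compatibility of the $\log$-operation with $\hat\theta$ and verifying that Theorem~\ref{KK_sigma} is being applied correctly — in particular that the conjugacy class ambiguity in choosing $c$ is harmless (it is, because $\hat\sigma(cac^{-1},-) = \hat\sigma(a,-)$ by the conjugation invariance noted in Section~\ref{COCO4}, and likewise $L^\theta$ depends only on the conjugacy class by Kuno's result) and that the sign convention discrepancy flagged after Theorem~\ref{KK_sigma} does not enter, since we are using the same duality \eqref{duality} consistently on both sides. Once these bookkeeping points are settled the proof is immediate. I would write it as: apply Theorem~\ref{KK_sigma}, substitute $x=\log^2(c)/2$, note $\hat\theta$ commutes with the relevant power series, conclude the intertwining of the two weakly nilpotent derivations, and exponentiate.
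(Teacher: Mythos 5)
Your proposal is correct and follows essentially the same route as the paper: the paper's proof likewise consists of applying Theorem~\ref{KK_sigma} to $\log^2(c)/2$, using that $\hat\theta$ commutes with the power series defining $\log$, and exponentiating the resulting intertwining relation. The additional bookkeeping you mention (conjugacy-class independence, sign conventions) is harmless and consistent with the paper.
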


\begin{proof}
  Theorem \ref{KK_sigma} implies that
$$ \ee^{ { L^\theta(c)}}  = \ee^{ {\langle \hat \theta(\log^2(c)/2), -\rangle }}=\ee^{\hat  \theta   \hat\sigma({\log^2(c)/2},-) \hat  \theta^{-1}}=
\hat  \theta \ee^{ \hat\sigma({\log^2(c)/2},-)} \hat  \theta^{-1}
$$
and the conclusion  follows.
\end{proof}

\subsection{The  group $\widehat \mcg(\Sigma,*)$} The generalized Dehn twists $t_{k,C}$ of closed curves in $\Sigma$
belong to the extended mapping class group $\widehat \mcg(\Sigma,*)$.
By Section
\ref{emcgbeginning}, this is the group of H-automorphisms of $\hat A$
preserving the F-pairing $\hat \eta:\hat A \times \hat A \to \hat A$
obtained as the completion of
  the homotopy intersection form $\eta$ in $A \subset \hat A$.  Under the assumptions  of this section on $\Sigma $ and $\kk$, we can say  more about   $\widehat \mcg(\Sigma,*)$.

   Since   $\kk=\QQ$, the natural homomorphism $\Aut (\hat A)\to \Aut (\hat \pi)$ is an
isomorphism. We claim that this isomorphism carries $\widehat
\mcg(\Sigma,*)\subset \Aut (\hat A)$ onto the group $\Aut_\nu(\hat
\pi)$ consisting of all   filtered automorphisms of $\hat \pi$
fixing $\nu=[\partial \Sigma] \in \pi \subset \hat \pi$. Indeed, our
assumptions on $\Sigma$ imply that $\hat \eta$ is non-degenerate.
The element   $\nu $
 satisfies $\eta(x,\nu) =
x -1  $ for all $x\in \pi$. Therefore, in the notation of Section
\ref{kd9++},  $\nabla_{\hat \eta}= \nu-1$.  Now, our claim directly
follows from  Lemma \ref{le5++}. Thus,  $\widehat \mcg(\Sigma,*)
\simeq \Aut_\nu(\hat \pi)$.

\subsection{A tensorial description of $\hat \eta$} We give   a tensorial
description of the homotopy intersection form $\hat \eta$  in
$\hat A$. It is used below to prove Theorem \ref{KK_sigma}.

Let $\varepsilon: \hat T \to \QQ$ be the counit of $\hat T$ defined
by $\varepsilon(1)=1$ and $\varepsilon(H^{\otimes m})=0$ for all
$m>0$. A bilinear pairing $\rho:\hat T \times \hat T \to \hat T$ is
an  \emph{$F$-pairing} if it  satisfies the identities \eqref{equ1}
and \eqref{equ2} with $\eta, A, \aug$ replaced by $\rho, \hat T,
\varepsilon$, respectively. This condition  may be
  rewritten as $\rho(1,\hat T)= \rho(\hat T,1) =0$ and
\begin{eqnarray*}
\rho(a_1a_2,b) &=& a_1\rho(a_2,b) \quad
\hbox{for all $a_1,a_2,b \in \hat T_{1}$,}\\
\rho(a,b_1b_2) &=& \rho(a,b_1) b_2  \quad
\hbox{for all $a,b_1,b_2 \in \hat T_{ 1}$}.
\end{eqnarray*} An
F-pairing $\rho:\hat T \times \hat T \to \hat T$ is {\it filtered}
if for all integer $m\geq 2$,
\begin{equation}\label{FpT1}
 {\rho}(\hat T_{ m}, \hat T) \subset \hat T_{ m-1} \supset \rho(\hat T, \hat T_{ m}).
\end{equation}

Denote by $\stackrel{\mediumdot}{\leadsto}$ the bilinear
  map  $\hat T_{ 1} \times \hat T_{ 1} \to \hat T$   defined on $H^{\otimes m} \times
H^{\otimes n}$ (for all  $m,n>0$) by a single contraction:
$$
\left(h_1\otimes \cdots \otimes h_m \stackrel{\mediumdot}{\leadsto} k_1\otimes \cdots \otimes k_n\right)
= (h_m \mediumdot k_1)\  h_1\otimes \cdots \otimes h_{m-1} \otimes k_2 \otimes \cdots \otimes k_n
$$
where $\mediumdot $ is the homological
intersection form in $H$. Using the formal
power series
$$
s(z) = \frac{1}{\ee^{-z}-1} +\frac{1}{z} = - \frac{1}{2} - \sum_{k\geq 1} \frac{B_{2k}}{(2k)!} z^{2k-1}
= - \frac{1}{2}-\frac{z}{12}+ \frac{z^3}{720}-\frac{z^5}{30240} + \cdots
$$
we define a bilinear map ${{\rho}} : \hat T\times \hat T \to
\hat T$  by
$$
{{\rho}}(a, b) = (a-\varepsilon(a)) \stackrel{\mediumdot}{\leadsto} (b-\varepsilon(b))
+ (a-\varepsilon(a)) \ s(\omega)  \ (b-\varepsilon(b)) $$
for all $a,b \in \hat T$. Here    $\omega$ is the element of $ \Lambda^2 H$  dual to the intersection form.

\begin{lemma}
The form ${{\rho}}$ is a filtered F-pairing in $\hat T$.
\end{lemma}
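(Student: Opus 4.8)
The plan is to decompose $\rho$ as a sum $\rho=\rho_1+\rho_2$ of two bilinear maps $\hat T\times\hat T\to\hat T$ and to verify separately that each of them is a filtered F-pairing, where
$$\rho_1(a,b)=(a-\varepsilon(a))\stackrel{\mediumdot}{\leadsto}(b-\varepsilon(b)),\qquad \rho_2(a,b)=(a-\varepsilon(a))\,s(\omega)\,(b-\varepsilon(b)).$$
Both the rewritten F-pairing conditions $\rho(1,\hat T)=\rho(\hat T,1)=0$ together with the two $\hat T_1$-bilinearity identities, and the filtration condition \eqref{FpT1}, are linear in $\rho$; hence the filtered F-pairings in $\hat T$ form a $\QQ$-submodule of the module of bilinear maps $\hat T\times\hat T\to\hat T$, and it suffices to treat $\rho_1$ and $\rho_2$ one at a time.

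The pairing $\rho_2$ is the \emph{inner} F-pairing associated with the element $e=s(\omega)\in\hat T$, in exact analogy with \eqref{inner}; it is well-defined because $\omega\in\hat T_2$ and $s$ is a power series, so that $s(\omega)=-1/2+(\text{element of }\hat T_2)$. The identities $\rho_2(1,\hat T)=\rho_2(\hat T,1)=0$, $\rho_2(a_1a_2,b)=a_1\,\rho_2(a_2,b)$ and $\rho_2(a,b_1b_2)=\rho_2(a,b_1)\,b_2$ for $a_i,b_i\in\hat T_1$ follow exactly as for inner F-pairings in $A$ in Section~\ref{kd2}, using only associativity of the product and the vanishing $\varepsilon(a)=0$ for $a\in\hat T_1$. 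For \eqref{FpT1}, note that $e\in\hat T_0$, so for $m\geq 2$ and $a\in\hat T_m$ we have $\varepsilon(a)=0$ and $\rho_2(a,b)=a\,e\,(b-\varepsilon(b))\in\hat T_m\cdot\hat T_0\cdot\hat T_1\subseteq\hat T_{m+1}\subseteq\hat T_{m-1}$; the inclusion $\rho_2(\hat T,\hat T_m)\subseteq\hat T_{m-1}$ is obtained symmetrically. Thus $\rho_2$ is a filtered F-pairing.

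For $\rho_1$, I would first record that the single contraction sends $H^{\otimes p}\times H^{\otimes q}$ into $H^{\otimes(p+q-2)}$. Writing $a-\varepsilon(a)=\sum_{p\geq 1}a_p$ and $b-\varepsilon(b)=\sum_{q\geq 1}b_q$ with $a_p\in H^{\otimes p}$, $b_q\in H^{\otimes q}$, only finitely many pairs $(p,q)$ contribute to a given homogeneous degree, so $\rho_1$ is a well-defined bilinear map with $\rho_1(\hat T_m,\hat T_n)\subseteq\hat T_{m+n-2}$; in particular \eqref{FpT1} holds. The vanishing $\rho_1(1,\hat T)=\rho_1(\hat T,1)=0$ is immediate from $1-\varepsilon(1)=0$. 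For the $\hat T_1$-bilinearity identities, by $\QQ$-linearity and continuity it suffices to take $a_1,a_2,b$ (resp.\ $a,b_1,b_2$) homogeneous of positive degree; then $\stackrel{\mediumdot}{\leadsto}$ contracts only the \emph{last} tensor factor of its left argument against the \emph{first} tensor factor of its right argument and concatenates the surviving factors, so replacing $a_2$ by $a_1a_2=a_1\otimes a_2$ simply prepends $a_1$ to the output, giving $\rho_1(a_1a_2,b)=a_1\,\rho_1(a_2,b)$, and symmetrically $\rho_1(a,b_1b_2)=\rho_1(a,b_1)\,b_2$. I expect this last bookkeeping to be the only delicate point — one must treat the boundary cases where $a_2$ or $b_1$ has degree one and where the output tensor degenerates to a scalar — but it is routine. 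Adding $\rho_1$ and $\rho_2$ then completes the proof.
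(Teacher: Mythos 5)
Your proof is correct and follows essentially the same route as the paper: the paper's proof simply records that $\rho(1,\hat T)=\rho(\hat T,1)=0$ and that $\rho(a,b)=a\stackrel{\mediumdot}{\leadsto}b+a\,s(\omega)\,b$ for $a,b\in\hat T_1$, and declares the claim an easy consequence. Your decomposition into the contraction term and the inner term, with the degreewise verification of $\hat T_1$-bilinearity and of \eqref{FpT1}, is just the fully written-out version of that "easy consequence."
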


\begin{proof}
Clearly,  ${{\rho}}(a,b)=0$ whenever $a=1$ or $b=1$ and, for all
$a,b\in \hat T_{1}$,
$$
{{\rho}}(a, b) = a \stackrel{\mediumdot}{\leadsto} b + a\ s(\omega)\ b.
$$
This   easily implies the claim of the lemma.
\end{proof}

\begin{theor}\label{tensorial_eta_formula}
Let $\theta:\pi \to \hat T$ be a Magnus expansion of $\pi$ satisfying the symplectic condition. Then  the following
diagram is commutative:
$$
\xymatrix{
\hat A \times \hat A \ar[d]_-{\hat \theta \times \hat \theta}^-\simeq \ar[r]^-{\hat \eta} & \hat A \ar[d]^-{\hat \theta }_-\simeq\\
\hat T \times \hat T \ar[r]^-{{{\rho}}} & \hat T.
}
$$
\end{theor}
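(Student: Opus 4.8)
The plan is to reduce the identity to a computation on the group $\pi$ itself, using the fact that both $\hat\eta$ and $\rho$ are filtered F-pairings (indeed $\hat\eta$ is non-degenerate here, since $\Sigma$ is compact with connected boundary). First I would note that $\hat\theta$ conjugates $\hat\eta$ into \emph{some} bilinear pairing $\rho':\hat T\times\hat T\to\hat T$, namely $\rho'=\hat\theta\circ\hat\eta\circ(\hat\theta^{-1}\times\hat\theta^{-1})$; since $\hat\theta$ is a filtered Hopf algebra isomorphism carrying $\hat A_m$ onto $\hat T_m$, the pairing $\rho'$ is automatically a filtered F-pairing in $\hat T$ in the sense defined just above. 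Thus the theorem asserts $\rho'=\rho$. The strategy is then to invoke a uniqueness principle: a filtered F-pairing $\rho$ in $\hat T$ is determined by its values $\rho(x,y)$ for $x,y$ in a topological generating set of $\hat T$ together with the induced homological form $\mediumdott{\rho}$ on $H$ --- this is the analogue, on the tensor-algebra side, of the arguments in the proofs of Lemmas \ref{le5+} and \ref{inverting_matrices}. More precisely, since $\rho$ and $\rho'$ are both filtered, it suffices to show $\rho^{(m)}=(\rho')^{(m)}$ on $\hat T/\hat T_m$ for all $m$, and by the product formulas \eqref{equ1}--\eqref{equ2} (in their $\hat T$-versions) an F-pairing on $\hat T/\hat T_m$ is determined by its restriction to $H\times H$ composed with multiplication, hence ultimately by the form $\mediumdott{\rho}:H\times H\to H\oplus\QQ$ (the $H$-component landing in $\hat T_1/\hat T_2\simeq H$ and the $\QQ$-component being $\hat\aug\circ\rho$).

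The core computation is therefore: evaluate $\rho'$ on a basis of $H$. Pick a symplectic basis $\{a_i,b_i\}$ of $H$ realized by loops, lift to elements $\alpha_i,\beta_i\in\pi$, and compute $\hat\eta(\alpha_i,\alpha_j)$, $\hat\eta(\alpha_i,\beta_j)$, etc., modulo $\hat A_2$ and modulo $\hat A_3$. Modulo $\hat A_2$ this just recovers the homological intersection form, by Lemma \ref{omega}, which matches the leading term $\stackrel{\mediumdot}{\leadsto}$ of $\rho$. The subtler part is the degree-one correction term $a\,s(\omega)\,b$: I would extract it by computing $\hat\eta$ to one higher order. Here the symplectic condition on $\theta$ enters decisively --- it says $\hat\theta([\partial\Sigma])=\ee^{-\omega}$ --- and this is precisely what pins down the $s(\omega)$ factor, because $[\partial\Sigma]$ satisfies $\eta(x,[\partial\Sigma])=x-1$ for all $x\in\pi$ (equivalently $\nabla_{\hat\eta}=[\partial\Sigma]-1$, as observed in Section \ref{symplectic}). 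Imposing $\rho'(x,\ee^{-\omega})=x-\hat\aug(x)$ after transporting via $\hat\theta$, expanding $\ee^{-\omega}$, and solving for the correction term forces exactly the generating function $s(z)=\frac{1}{\ee^{-z}-1}+\frac1z$ --- the Bernoulli-number series is the unique solution of the resulting recursion (this is the content of the appendix identities on $\log$/$\exp$). So the plan is: (i) establish the uniqueness principle; (ii) match leading terms via Lemma \ref{omega}; (iii) match the next term by exploiting the symplectic normalization $\hat\theta([\partial\Sigma])=\ee^{-\omega}$ and the defining property $\eta(-,[\partial\Sigma])=\id-\aug$.

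The main obstacle I anticipate is step (iii): isolating and correctly bookkeeping the degree-raising-by-one part of $\hat\eta$ in terms of $\hat\theta$, and verifying that the resulting functional equation for the correction series has $s(\omega)$ as its solution. This requires care with the noncommutativity of $\hat T$ and with the precise normalization of the duality isomorphism \eqref{duality} (the source of the sign discrepancy with \cite{KK} already flagged in the text). A clean way to organize it is to work directly with $\nabla_{\hat\eta}$: by Lemma \ref{le5+} the pairing $\hat\eta$ is \emph{completely} determined by the single element $\nabla_{\hat\eta}=[\partial\Sigma]-1$, so it suffices to check that the candidate pairing $\hat\theta^{-1}\rho(\hat\theta\times\hat\theta)$ satisfies $\hat\theta^{-1}\rho(\hat\theta(-),\hat\theta([\partial\Sigma]))=\id-\hat\aug$, i.e.\ that $\rho(t,\ee^{-\omega})=t-\varepsilon(t)$ for all $t\in\hat T$. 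Plugging $\ee^{-\omega}=1+\sum_{k\ge1}\frac{(-\omega)^k}{k!}$ into the definition of $\rho$ and using that $\omega$ is the dual of the intersection form (so $h\stackrel{\mediumdot}{\leadsto}\omega$ realizes the identity on $H$), this reduces to the scalar identity $\frac{1}{\ee^{-z}-1}+\frac1z = s(z)$, which holds by definition of $s$. This bypasses most of the heavy bookkeeping and is, I expect, the cleanest route; the remaining work is then just checking that $\hat\theta^{-1}\rho(\hat\theta\times\hat\theta)$ is genuinely a filtered \emph{non-degenerate} F-pairing so that Lemma \ref{le5+}'s uniqueness clause applies, which follows from the symplectic (non-degeneracy) properties of $\theta$ and of the intersection form on the compact connected-boundary surface $\Sigma$.
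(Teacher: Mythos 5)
Your proposal is correct and, in its final "clean" form, coincides with the paper's own proof: one observes that $\hat\theta^{-1}\rho(\hat\theta\times\hat\theta)$ is a non-degenerate filtered F-pairing in $\hat A$, that $\nabla_{\hat\eta}=\nu-1$ with $\nu=[\partial\Sigma]$, and that the symplectic condition $\hat\theta(\nu)=\ee^{-\omega}$ reduces everything, via the uniqueness clause of Lemma~\ref{le5+}, to the identity $\rho(h,\ee^{-\omega})=h$ for $h\in H$, which is exactly the defining series for $s(z)$. The only slip is the parenthetical claim that $h\stackrel{\mediumdot}{\leadsto}\omega$ realizes the identity on $H$ --- it is \emph{minus} the identity --- but the scalar identity you ultimately invoke, $s(z)=\frac{1}{\ee^{-z}-1}+\frac{1}{z}$, is the one consistent with that sign, so the argument stands as written.
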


\begin{proof}
Since the algebra isomorphism $\hat \theta: \hat A \to \hat T$
induced by $\theta$  preserves the counit and the filtration,
${{\rho}}_\theta = \hat \theta^{-1}  {{\rho}}   (\hat \theta \times
\hat \theta): \hat A \times \hat A\to \hat A$ is a filtered
F-pairing. It is non-degenerate in the sense of Section \ref{kd9++}
because $\pi$ is finitely generated and for all $x,y \in \pi$,
\begin{eqnarray*}
\hat \aug\ {{\rho}}_\theta(x,y) &=& \varepsilon {{\rho}}(\theta(x),\theta(y))\\
&=&  \varepsilon\left( (\theta(x)-1)  \stackrel{\mediumdot}{\leadsto} (\theta(y)-1) \right)= [x]\mediumdot [y].
\end{eqnarray*}
As observed above, $\nabla_{\hat \eta}= \nu-1$. The theorem will
follow from Lemma \ref{le5+} as soon as we show  that
$\nabla_{{{\rho}}_\theta}= \nu-1$ as well. By the symplectic
condition on $\theta$, this is equivalent to $ {{\rho}}
(a,\ee^{-\omega}) = a- \varepsilon(a) $
 for all $a \in \hat T$.
It is enough to prove the latter for $a=h\in H$. In this case,
\begin{eqnarray*}
{{\rho}}(h,\ee^{-\omega}) &=&\sum_{r\geq 1} \frac{(-1)^r}{r!}{{\rho}}(h,\omega)\omega^{r-1}\\
&=& {{\rho}}(h,\omega) \frac{\ee^{-\omega}-1}{\omega}\\
&=& \left(h  \stackrel{\mediumdot}{\leadsto} \omega + h\ s(\omega)\  \omega\right)
 \frac{\ee^{-\omega}-1}{\omega}\\
&=& h \left(-1 +  s(\omega)\ \omega\right)
\frac{\ee^{-\omega}-1}{\omega} \ = \ h. \end{eqnarray*} Here we   use
that $h\stackrel{\mediumdot}{\leadsto} \omega =-h$; this is checked
using a symplectic basis of $H $.
\end{proof}

\subsection{Proof of Theorem \ref{KK_sigma}}\label{Another proof of Theorem}
The following lemma is an easy consequence of  the definition of the
pairing $\langle -, - \rangle$ in $\hat T$.

\begin{lemma}\label{concrete_derivations}
For  any   $h_1,\dots , h_m,  k_1,\dots, k_n \in H$,
$$ \langle h_1\cdots h_m , k_1 \cdots k_n\rangle=
- \sum_{j=1}^n
k_1   \cdots   k_{j-1}   \left(k_j \stackrel{\mediumdot}{\leadsto}
N(h_1\cdots h_m)\right)  k_{j+1}   \cdots   k_n.$$ \end{lemma}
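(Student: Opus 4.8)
The plan is to verify the claimed formula for $\langle h_1\cdots h_m, k_1\cdots k_n\rangle$ by unwinding the definition of the pairing $\langle -,-\rangle$ given in Section \ref{tensorial_sigma}. Recall that $\langle a,b\rangle$ is the value on $b$ of the derivation of $\hat T$ determined by $a$, where the element $a = h_1\cdots h_m \in \hat T_m$ with $m\geq 2$ acts via $N(a)\in D\simeq \Der_\omega(\hat T)$. So the first step is to write down explicitly how the derivation associated to $N(a)$ acts on a product $k_1\cdots k_n$ of degree-one elements. Since a derivation $d\in \Der(\hat T)$ is determined by its restriction to $H$, and the identification $\Der(\hat T)\simeq H\otimes \hat T_1$ via the duality \eqref{duality} means that a tensor $\sum_\alpha u_\alpha\otimes w_\alpha \in H\otimes \hat T_1$ acts on $k\in H$ by $k\mapsto \sum_\alpha (k\mediumdot u_\alpha) w_\alpha$, I would first check that for a general $t = p_1\cdots p_r\in \hat T_r$ the expression $N(t)$ acts on $k\in H$ precisely by $k\mapsto k \stackrel{\mediumdot}{\leadsto} N(t)$. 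This is just the statement that contracting the first tensor slot of $N(t)\in \hat T_2$ against $k$ via $\mediumdot$ reproduces the $H\otimes\hat T_1$-action; it follows directly from the definition of $\stackrel{\mediumdot}{\leadsto}$.

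Next I would extend this from the action on $H$ to the action on a product $k_1\cdots k_n$ using the Leibniz rule: since $\langle a,-\rangle$ is a derivation of $\hat T$,
$$
\langle h_1\cdots h_m, k_1\cdots k_n\rangle = \sum_{j=1}^n k_1\cdots k_{j-1}\,\langle h_1\cdots h_m, k_j\rangle\, k_{j+1}\cdots k_n,
$$
and then substitute $\langle h_1\cdots h_m, k_j\rangle = k_j\stackrel{\mediumdot}{\leadsto}N(h_1\cdots h_m)$ from the previous step. The only remaining point is the sign: the definition in Section \ref{tensorial_sigma} says $t\in\hat T_2$ acts as $N(t)$, while the claimed formula carries an overall minus sign, so I need to track carefully the sign built into the identification $\Der_\omega(\hat T)\simeq D\subset H\otimes\hat T_1$ and into $\stackrel{\mediumdot}{\leadsto}$. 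Concretely, the duality \eqref{duality} sends $h$ to $h\mediumdot(-)$, and with the convention that $s\in H$ acts on $H$ by $k\mapsto k\mediumdot s$ (as stated for the degree-one part of the $\hat T$-action), one gets the sign discrepancy exactly because $k\stackrel{\mediumdot}{\leadsto}(u\otimes w) = (u\mediumdot k)w = -(k\mediumdot u)w$. I expect the sign bookkeeping to be the only genuinely delicate part of the argument; everything else is a formal consequence of the derivation property and the definitions.

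Finally, I would note that it suffices to check the identity on homogeneous elements $h_1\cdots h_m$ and $k_1\cdots k_n$ with all $h_i, k_j\in H$, since both sides are multilinear and the degree-completed tensor algebra is generated in this way; the passage to infinite sums is harmless because both sides preserve (or shift by a bounded amount) the filtration of $\hat T$. Assembling the three steps—the formula for $N(t)$ acting on $H$, the Leibniz expansion over the factors $k_j$, and the sign verification—gives the stated identity. Since this lemma is explicitly flagged in the excerpt as "an easy consequence of the definition of the pairing $\langle -,-\rangle$," I would keep the write-up short, spelling out only the sign computation in detail and leaving the Leibniz step as a one-line invocation of the derivation property.
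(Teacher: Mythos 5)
Your overall strategy is the right one, and indeed the only one available: the paper gives no argument for this lemma beyond calling it an easy consequence of the definitions, and unwinding those definitions means exactly what you propose --- identify the derivation attached to $N(h_1\cdots h_m)$ via $\Der(\hat T)\simeq H\otimes \hat T_1$, compute its value on a single $k\in H$, and then expand over $k_1\cdots k_n$ by the Leibniz rule. The reduction to homogeneous elements and the Leibniz step are both fine.

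There is, however, a genuine problem with your sign bookkeeping, and as written your argument would produce the formula \emph{without} the minus sign. You misquote both conventions: the paper's duality sends $u\otimes w\in H\otimes \hat T_1$ to the map $k\mapsto (u\mediumdot k)\,w$ (not $(k\mediumdot u)\,w$), and the contraction is $k\stackrel{\mediumdot}{\leadsto}(u\otimes w)=(k\mediumdot u)\,w$ (last factor of the \emph{left} argument paired, in first position, with the first factor of the right argument --- not $(u\mediumdot k)\,w$). Putting the correct conventions together, the derivation determined by $t$ satisfies $\langle t,k\rangle = \sum_i (p_i\mediumdot k)(\cdots) = -\bigl(k\stackrel{\mediumdot}{\leadsto}N(t)\bigr)$, by skew-symmetry of $\mediumdot$. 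Your Step 1 instead asserts $\langle t,k\rangle = +\,k\stackrel{\mediumdot}{\leadsto}N(t)$, and your Leibniz substitution in the second paragraph uses that positive version; the later remark about a ``sign discrepancy'' is never fed back into the computation, so the proposal is internally inconsistent (your own step 1 and your own sign discussion contradict each other). The fix is simply to replace the Step 1 identity by $\langle h_1\cdots h_m,k\rangle=-\bigl(k\stackrel{\mediumdot}{\leadsto}N(h_1\cdots h_m)\bigr)$, verified directly from the two conventions as quoted in the paper; the Leibniz expansion then yields the stated formula, minus sign included. (It is also worth checking the case $m=1$, where $h_1$ acts by $k\mapsto h_1\mediumdot k$ by fiat rather than through $D$; the same identity $h_1\mediumdot k=-\,(k\stackrel{\mediumdot}{\leadsto}h_1)$ holds, so the formula is uniform.)
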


Let now $\theta$ be a symplectic expansion of $\pi$. Set
$$\sigma'=\hat \theta   \hat \sigma  (\hat \theta^{-1} \times \hat
\theta^{-1}):\hat T\times \hat T\longrightarrow \hat T. $$ We should show that $\sigma'= \langle -, - \rangle$.
Let $  \eta_- $ be the F-pairing in $\hat A$ defined by
$$
  \eta_- (a,b)=\hat \eta(a,b) - (a -\hat \aug(a))\ s(-\log(\nu))\ (b -\hat \aug(b)) \in \hat A,
$$
for all $a,b\in \hat A$. This F-pairing is equivalent to $\hat \eta$
in the sense of Section \ref{Equivalence and transposition} (where
we extend  the terminology of that section   to F-pairings in $\hat
A$ in the obvious way). Therefore $\eta_-$ and $\hat \eta$ have the
same derived form: $\hat \sigma^{ \eta_-}= \hat\sigma^{\hat \eta} =
\widehat{\sigma^\eta}=\hat \sigma:\hat A\times \hat A\to \hat A$.
Lemma \ref{lemma_cs} computes $\hat \sigma$ from   $\eta_-$. Since
$\hat \theta : \hat A\to \hat T$ is a filtration-preserving Hopf
algebra isomorphism, we deduce that
$$ \sigma'= \mu   (\mu \hat \otimes \mu) P_{4213}
\left(\id_{\hat T } \hat \otimes  (  S \hat \otimes \id_{\hat T})  \Delta \eta_-'  \,  \hat \otimes \id_{\hat T } \right)
   (  \Delta \hat \otimes   \Delta),
$$
where $\mu$, $\Delta$, and $S$ are multiplication, comultiplication,
and antipode in $\hat T$, and $$\eta_-' = \hat \theta   \eta_- (\hat
\theta^{-1} \times \hat \theta^{-1}):\hat T\times \hat T \longrightarrow \hat
T.$$ Theorem \ref{tensorial_eta_formula} implies that for all
$c,d\in \hat T$,
$$
\eta_-' (c, d) = (c-\varepsilon(c)) \stackrel{\mediumdot}{\leadsto} (d-\varepsilon(d)) .
$$
Hence $\eta_-' $ is a map of degree $-2$. Since    $\mu$, $\Delta$,
$S$ are degree $0$ maps,   $ \sigma'$ is a degree $-2$ map. So, $
\sigma'(h_1\cdots h_m,k_1\cdots k_n) \in H^{\otimes (m+n-2)}$ for
any $h_1,\dots,h_m,k_1,\dots,k_n\in H$.   Lemma \ref{le4} implies
that $ \sigma'(h_1\cdots h_m,k_1\cdots k_n)$ is congruent modulo
$\hat T_{m+n-1}$ to
$$
 \sum_{i=1}^m \sum_{j=1}^n
(h_i \mediumdot k_j)\
k_1 \cdots k_{j-1}\left(h_{i+1}\cdots h_m h_1 \cdots h_{i-1}\right) k_{j+1} \cdots k_n.
$$
 We deduce that this congruence is actually an equality in $\hat
T$. Comparing   with Lemma \ref{concrete_derivations}, we conclude
that $\sigma' (h_1\cdots h_m,k_1\cdots k_n)=\langle h_1\cdots
h_m,k_1\cdots k_n \rangle .$   This is equivalent to the
  claim of the theorem.

\appendix

\section{Formal identities} \label{formulaire}

We gather here a few classical formulas used in the main body of the
paper. These formulas  hold in the ring of formal power series in
$n$ non-commuting variables
 $
P =\kk \langle\langle  x_1,\dots,x_n \rangle\rangle $ with
coefficients in a commutative ring   $ \kk\supset \QQ$. Consider the
  degree filtration   $P=P_ 0 \supset P_1 \supset P_2 \supset \cdots$.
For   $u \in P_1$, set
\begin{equation}\label{defsel}
\ee^{u} = \sum_{k\geq 0} \frac{u^k}{k!} \in 1+P_1
\quad \hbox{and} \quad
\log(1+u) = \sum_{k\geq 1}(-1)^{k+1}\frac{u^k}{k}  \in P_1.
\end{equation}
The obvious identities $ \ee^{\log(1+u)}= 1+u$ and $ \log(\ee^u) =
u$ imply that the exponential map and the logarithm are mutually
inverse bijections:
$$
1 + P_1
\xymatrix{
\ar@/^0.5pc/[rr]^-\log_-\simeq &&  \ar@/^0.5pc/[ll]^-\exp
}
P_1.
$$
The Baker--Campbell--Hausdorff formula asserts that for all $u,v\in
P_1$,
$$
\log(\ee^u\ee^v) = u + v + \frac{1}{2}[u,v] + \frac{1}{12}[u,[u,v]]+ \frac{1}{12} [v,[v,u]]+ \cdots ,
$$
where the dots stand for  a series of Lie polynomials in $u,v$ of
degree  $\geq 4$ (see,  for instance,  \cite{MKS}, Theorem 5.19). In
particular, if $u,v\in  P_1$ commute, then $ \ee^u \ee^v =
\ee^{u+v}$. Also, $\log\left((1+u)^m\right)= m \log(1+u)$ for any $
u \in P_1$ and $ m\in \ZZ$. For all $u \in P_1$ and $v\in 1+P_1$,
$$
 v \log(1+u) v^{-1} = \log\left(v(1+u)v^{-1}\right).
$$
Finally, we mention the following formula (see Exercice 5.9.7 of
\cite{MKS}): for all $ u \in P_1$ and $ v\in P$,
\begin{equation}\label{Hadamard}
\ee^{u} v \ee^{-u} = \sum_{n\geq 0} \frac{[\overbrace{u,[u, \cdots[u}^{n \operatorname{times}},v]\cdots ]]}{n!}
= \ee^{[u,-]}(v) .
\end{equation}

Using the universal property of $P=\kk \langle\langle  x_1,\dots,x_n
\rangle\rangle$, one can obtain similar formulas in other  settings.
For example, consider a \emph{complete augmented algebra} $R$ in the
sense of \cite{Qu}: thus, $R$ is a $\kk$-algebra with a filtration
by submodules  $R=R_0\supset R_1 \supset R_2 \supset \cdots$  such
that $R_iR_j \subset R_{i+j}$ for all $i,j\geq 0$, the $\kk$-algebra
$R/R_1$ is isomorphic to $\kk$ and the canonical map $R \to
\underleftarrow{\lim}\ R/R_m$ is an isomorphism. (Such $R$ arise  in
this paper as the fundamental completions of group algebras.)
Formulas \eqref{defsel} define the mutually inverse logarithm $1+R_1
\to R_1$ and  exponent  $R_1 \to 1+R_1$. The identities in $P$
stated above give  rise to  similar identities in $R$. For example,
for all $r\in R_1$ and $s\in R$,
$$
\ee^{r}s\ee^{-r} = \sum_{n\geq 0} \frac{[r,[r, \cdots[r,s]\cdots ]]}{n!}
= \ee^{[r,-]}(s).
$$
This is obtained by assuming that $s\in R_1$ (which is allowed since
$R=\kk \cdot1+ R_1$),  applying (\ref{Hadamard}) to $u=x_1$, and
$v=x_2$ and transporting  the resulting  equality   to $R$ via the
  algebra homomorphism $\kk\langle\langle  x_1,x_2
\rangle\rangle \to R$, $x_1\mapsto r$, $x_2 \mapsto s$.

                     \end{document}